\newtheorem{thm}{Theorem}[section]
\newtheorem{lem}{Lemma}[section]
\newtheorem{cor}{Corollary}[section]
\newtheorem{prop}{Proposition}[section]
\newtheorem{rem}{Remark}[section]
\begin{document}
\numberwithin{equation}{section}

\title[The tangential $k$-Cauchy-Fueter complexes and Hartogs' phenomenon  ]{The tangential $k$-Cauchy-Fueter complexes and Hartogs' phenomenon over the right quaternionic Heisenberg group}

\author{ Yun Shi and  Wei Wang}

\begin{abstract}
We construct the tangential $k$-Cauchy-Fueter complexes   on the right quaternionic Heisenberg group, as the   quaternionic counterpart of $\overline{\partial}_b$-complex on the  Heisenberg group in the theory of several complex variables.   We can use the $L^2$ estimate to solve the nonhomogeneous tangential $k$-Cauchy-Fueter equation under the compatibility condition over  this  group modulo a lattice. This solution has an important vanishing property when the group is   higher  dimensional. It allows us to prove the  Hartogs' extension phenomenon for $k$-CF functions,   which are the   quaternionic counterpart of CR functions.
\end{abstract}
\thanks{The first author is partially supported by National Nature Science
Foundation
  in China (No. 11801508; No. 11571305); The second author is partially supported by National Nature Science
Foundation
  in China (No. 11571305)\\
Department of Mathematics, Zhejiang University of Science and Technology, Hangzhou 310023, China,
E-mail: hzxjhs1987@163.com;
 Department of Mathematics, Zhejiang University,  Hangzhou 310027, China,
E-mail: wwang@zju.edu.cn}
\maketitle

\section{Introduction}
The $\overline{\partial}$-complex  plays an important role in the theory of several complex variables since many important results for holomorphic functions can be obtained by solving nonhomogeneous $\overline{\partial}$-equation. We obtain $\overline{\partial}_b$-complex when it is restricted to a CR submanifold,  and many important results for CR functions can be also  obtained by solving $\overline{\partial}_b$-equation.
In general, for a differential complex, there is an abstract way to obtain a boundary  complex restricted to a submanifold, which is written down in terms of quotient sheafs (cf. e.g. \cite{Andreotti,Andreotti2,Nacinovich}).

In quaternionic analysis we now know  the $k$-Cauchy-Fueter complex explicitly (cf. \cite{adams2,Ba,bS,bures,CSS,SSSV,wang15,wang24} and references therein), which are used to show several interesting properties of $k$-regular functions (cf. \cite{Colombo,wang15,wang23}  and references therein). When restricted to a quadratic hypersurface in $\mathbb{H}^{n+1},$ we have the tangential $k$-Cauchy-Fueter operators and $k$-CF functions (cf. \cite{wang2} for $k=1,n=2$),  corresponding to $\overline{\partial}_b$ and CR functions over a CR manifold.  In this paper, we will consider their restriction to a model quadratic hypersurface
\begin{equation}\label{S}
\mathcal{S}:=\{(q',q_{n+1})\in\mathbb{H}^{n}\times\mathbb{H}:\rho(q',q_{n+1})=0\}
\end{equation}
 in $\mathbb{H}^{n+1},$ where
 \begin{equation}\label{hyper}
  \rho(q',q_{n+1}):={\rm Re}\,q_{n+1}-\phi(q'),\qquad  \phi(q'):=\sum_{l=0}^{n-1}\left(-3x^2_{4l+1}+x^2_{4l+2}
+x^2_{4l+3}+x^2_{4l+4}\right).
 \end{equation}
 Here we write $q'=(\cdots,q_l,\cdots),q_l=x_{4l+1}+\mathbf{i}x_{4l+2}+
\mathbf{j}x_{4l+3}+\mathbf{k}x_{4l+4}.$
This hypersurface has the structure of  the \emph{right quaternionic Heisenberg group}  $\mathscr{H}=\mathbb{H}^n\times\rm{Im}\ \mathbb{H}$ with  the multiplication given by
\begin{align}\label{hei}
(x,t)\cdot(y,s)=\left(x+y,t+s+2{\rm{Im}} ({x}\overline{y})\right),
\end{align}
where $x,y\in \mathbb{H}^{n}$ and ${t},s\in \rm{Im}\ \mathbb{H}.$ We construct a family of  differential complexes on $ \mathscr{H} $,  the \emph{tangential $k$-Cauchy-Fueter complexes}, given by
\begin{equation}\begin{aligned}\label{cf}
0\rightarrow C^{\infty}(\Omega,\mathscr{V}_0)
&\xrightarrow{\mathscr{D}_{0}} C^{\infty}(\Omega,\mathscr{V}_1)\xrightarrow{\mathscr{D}_{1}} C^{\infty}(\Omega,\mathscr{V}_2)\rightarrow \cdots\xrightarrow{\mathscr{D}_{2n-2}} C^{\infty}(\Omega,\mathscr{V}_{2n-1})\rightarrow0,
\end{aligned}\end{equation}
for  a domain  $\Omega $   in  $\mathscr{H} $, where
\begin{equation}\begin{aligned}\label{V}
\mathscr{V}_j:=&\odot^{k-j}\mathbb{C}^{2}\otimes
\wedge^j\mathbb{C}^{2n},\qquad \qquad \quad j=0,1,\cdots, k,
\\
\mathscr{V}_{j}:=&\odot^{j-k-1}\mathbb{C}^{2}\otimes\wedge^{j+1}\mathbb{C}^{2n},
\qquad\quad j=k+1,\cdots,2n-1,
\end{aligned}\end{equation}
for fixed $k=0,1,\cdots$, and  $\odot^{p}\mathbb{C}^{2}$ is the $p$-th symmetric power of $\mathbb{C}^2.$ They are  the quaternionic counterpart  of   $\bar{\partial}_b$-complex  over the Heisenberg group in  the theory of  several complex variables. They have the same form as the $k$-Cauchy-Fueter complexes on $\mathbb{H}^n$ (cf. Remark \ref{rem:kk}), but $\mathscr{D}_{j}$'s are given in terms of left invariant vector fields    (\ref{eq:D-j-1}) (\ref{eq:D-j-2}) (\ref{eq:D-j-3}), which are differential operators of variable coefficients. So we can not use the computational algebraic method in \cite{Colombo} to construct these complexes. This family of  complexes are natural in the sense that they can be viewed as restriction to the hypersurface $\mathcal{S}$ of the complexes on $\mathbb{H}^{n+1},$ but  not natural in the sense that they are not invariant under the conformal transformation group ${\rm Sp}(2(n+2),1)$ of $\mathscr H$ (cf. Subsection \ref{sub25}).

$\mathscr{D}_{0}$ in  (\ref{cf}) is called the \emph{tangential $k$-Cauchy-Fueter operator}. A $\odot^{k}\mathbb{C}^{2}$-valued distribution $f$ on $\Omega$ is called \emph{$k$-CF}  if $\mathscr{D}_{0}f=0$ in the sense of distributions. The space of all $k$-CF functions on $\Omega$ is denoted by $\mathcal{A}_k(\Omega).$ A $1$-CF function is also called \emph{anti-CRF function} in \cite{Ivanov,Ivanov10}. Such functions play an important role in the  study of  pseudo-Einstein equation over the quaternionic Heisenberg group  \cite{Ivanov10}.

On the other hand, when the hypersurface is the boundary of the Siegel upper half space, i.e. the defining function in (\ref{S}) is given by
$$\rho={\rm Re}\, q_{n+1}-|q'|^2,$$ the corresponding group is  the \emph{left quaternionic Heisenberg group}
$\widetilde{\mathscr H}:=\mathbb{H}^n\times{\rm Im}\,\mathbb{H}$ with the multiplication given by
\begin{align}\label{hei'}
(x,t)\cdot(y,s)=\left(x+y,t+s+2{\rm{Im}} (\overline{x}{y})\right).
\end{align}
We already know the tangential $k$-Cauchy-Fueter complex (cf. \cite[Theorem 1.0.1]{wang1}) on the left quaternionic Heisenberg group  by using the twistor method  (see also \cite{BE,PS} for constructing complexes by this method) . But in this case $\wedge^j\mathbb{C}^{2n}$ in (\ref{V}) must be replaced by   the irreducible representation  of $\mathfrak{sp}(2n,\mathbb{C})$ with the highest weight to be the $j$-th fundamental weight (cf. Subsection \ref{sub25}).
It is  more complicated than   the  right quaternionic case. So we only consider  the right quaternionic Heisenberg group in this paper.
We see that when restricted to different submanifolds, we get different differential complexes. This is a new phenomenon compared to  several complex variables, where expressions of  $\overline{\partial}_b$-complex for different CR submanifolds are the same.   It is an interesting problem to write down explicitly the tangential $k$-Cauchy-Fueter complexes for all quadratic hypersurfaces in $\mathbb{H}^{n+1}$ (cf. \cite{wang2} for such hypersurfaces).

In this paper we prove Hartogs' phenomenon for $k$-CF functions over right quaternionic Heisenberg group.

\begin{thm}\label{hartogs}
Let $\Omega$ be a bounded open set in the right quaternionic Heisenberg group $\mathscr{H}$ with  ${\rm dim}\ \mathscr{H}\geq19,$ and let $K$ be a compact subset of $\Omega$ such that $\Omega\setminus K$ is connected. Then for each $u\in\mathcal{A}_k(\Omega\setminus K),$  $k=2,3,\cdots,$ we can find $U\in\mathcal{A}_k(\Omega)$ such that $U=u$ in $\Omega\setminus K.$
\end{thm}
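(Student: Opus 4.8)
The plan is to run the classical Bochner--Hörmander cut-off-and-correct scheme for Hartogs' phenomenon, transplanted from $\mathbb{C}^n$ to the right quaternionic Heisenberg group and its compact quotient by a lattice. First I would fix a cut-off function $\chi\in C^\infty_c(\Omega)$ with $\chi\equiv1$ on a neighborhood of $K$, and define $U_0:=(1-\chi)u$ on $\Omega\setminus K$, extended by $0$ across $K$; since $1-\chi$ vanishes near $K$, this is a smooth $\odot^k\mathbb{C}^2$-valued function on all of $\Omega$. Writing $\sigma$ for the (zeroth order) symbol contraction of the first order operator $\mathscr{D}_0$, the Leibniz rule together with $\mathscr{D}_0u=0$ on $\Omega\setminus K$ gives $f:=\mathscr{D}_0U_0=-\sigma(\mathscr{D}_0,d\chi)\,u$, a smooth $\mathscr{V}_1$-valued section supported in the compact set $\operatorname{supp}d\chi\subset\Omega\setminus K$. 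Because (\ref{cf}) is a complex, $\mathscr{D}_1f=\mathscr{D}_1\mathscr{D}_0U_0=0$, so $f$ obeys the compatibility condition.

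Next I would pass to the compact picture. Using the dilation structure of $\mathscr{H}$, choose a lattice $\Lambda$ so that a translate of $\Omega$ lies in a fundamental domain, and extend $f$ by zero to a smooth section $\tilde f$ on the nilmanifold $M:=\mathscr{H}/\Lambda$, still satisfying $\mathscr{D}_1\tilde f=0$. The $L^2$ solvability established above then yields $v$ on $M$ with $\mathscr{D}_0v=\tilde f$, smooth by the regularity built into the $L^2$ theory. This is the point at which the hypothesis $\dim\mathscr{H}\geq19$ (equivalently $n\geq4$), and the range $k\geq2$, are used: the canonical solution has the vanishing property, so $v\equiv0$ on the connected component of $M\setminus\operatorname{supp}\tilde f$ that contains $M\setminus\overline\Omega$. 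Since $\operatorname{supp}\tilde f$ is compactly contained in $\Omega$, this component also contains the collar of $\Omega$ near $\partial\Omega$, where $\chi=0$; hence $v$ vanishes there.

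Finally I would set $U:=U_0-v$ on $\Omega$. Then $\mathscr{D}_0U=f-\tilde f=0$, so $U\in\mathcal{A}_k(\Omega)$. To identify $U$ with $u$ off $K$, consider $w:=v+\chi u$ on $\Omega\setminus K$; by the same Leibniz computation, $\mathscr{D}_0w=\tilde f+\sigma(\mathscr{D}_0,d\chi)u=0$, so $w$ is $k$-CF there. On the collar we have $\chi=0$ and $v=0$, so $w$ vanishes on a nonempty open subset of the connected set $\Omega\setminus K$; unique continuation for $k$-CF functions then forces $w\equiv0$, i.e. $v=-\chi u$ on $\Omega\setminus K$. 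Therefore $U=(1-\chi)u-v=u$ on $\Omega\setminus K$, which is the assertion.

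I expect the substantive input to be the vanishing property of the canonical solution and the dimension threshold it requires: the rest is the standard correction argument, but the conclusion $v\equiv0$ on the outer component is exactly the quaternionic analog of the vanishing of compactly supported first cohomology of (\ref{cf}), which fails when $\mathscr{H}$ is too small. A secondary point to pin down is the unique continuation for $k$-CF functions, which is what propagates $U=u$ from the collar across the annular region $\operatorname{supp}\chi\setminus K$ and is where the connectedness of $\Omega\setminus K$ is consumed.
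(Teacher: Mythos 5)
Your scheme---cut off, correct by a solution supported in $\operatorname{supp}\chi$ obtained on the compact quotient $\mathscr{H}/\mathscr{H}_{\mathbb{Z}}$, and propagate $U=u$ across the annulus by unique continuation---is exactly the paper's; the unique continuation you defer to is supplied there by the identity theorem for real analytic functions, since $\mathscr{D}_0g=0$ forces $\Delta_bg=0$ componentwise (Proposition \ref{DD}) and $\Delta_b$ is analytic hypoelliptic (Corollary \ref{anal}).

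There is, however, one genuine gap at the step ``the $L^2$ solvability established above then yields $v$ with $\mathscr{D}_0v=\tilde f$.'' On the compact quotient the kernel of $\Box_1$ contains the constant vectors, and the existence theorem (Theorem \ref{t1}, hence Theorem \ref{kcfe}) applies only to data orthogonal to constants. Your $\tilde f$, with components $-\sum_{A_1'}Z_A^{A_1'}\chi\cdot u_{A_1'\cdots A_k'}$, has no a priori reason to have zero mean over $\mathscr{H}/\mathscr{H}_{\mathbb{Z}}$: the naive integration by parts $\int (Z_A^{A_1'}\chi)\,u=-\int\chi\,Z_A^{A_1'}u$ is not available because $\chi\neq0$ on $K$, where $u$ is undefined. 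The paper therefore solves $\mathscr{D}_0\widetilde U=\tilde f-c$ with $c$ the mean value of $\tilde f$, and only afterwards deduces $c=0$ by evaluating $\mathscr{D}_0(\widetilde u-\widetilde U)=c$ on $\Omega\setminus\operatorname{supp}\chi$, where $\mathscr{D}_0\widetilde u=\mathscr{D}_0u=0$ and $\widetilde U=0$. Without this detour your invocation of the existence theorem is not licensed. Two smaller points: the vanishing theorem requires $\chi$ to be chosen so that $\mathscr{F}\setminus\operatorname{supp}\chi$ is connected, which you should state; and the restriction $\dim\mathscr{H}\geq19$, $k\geq2$ is consumed not by the vanishing property (which rests on analytic hypoellipticity, periodicity and Liouville's theorem) but by the a priori estimate $k\|\mathscr{D}_0^*f\|^2+\tfrac{k-1}{2}\|\mathscr{D}_1f\|^2\geq(k-1)\left(1-\tfrac{3}{n}\right)\langle\Delta_bf,f\rangle$, whose right-hand side is useful only for $n>3$ and $k\geq2$.
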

The restriction    of ${\rm dim}\ \mathscr{H}  $ and $k $ in this theorem comes  from the technical  difficulty to establish the  $L^2$ estimate in the remaining cases.
A form of Hartogs' phenomenon was proved for many elliptic differential systems  (cf. \cite{Colombo,Palamodov} and references therein). Notably  in our case  $\mathscr{D}_0$ as a matrix-valued horizontal vector field  is not an elliptic  system, and (\ref{cf}) is not an elliptic complex. This is because   symbols of $\mathscr{D}_j$'s  vanish  at the cotangent vectors   annhilating horizontal vector fields.

In  several complex variables we have  deep Hartogs-Bochner effect for CR functions on  CR submanifolds,  which   are usually proved by using integral representation formulae (cf. \cite{Henkin,Laurent,Porten} and references therein for further development of  this effect).   But in the quaternionic case, the integral representation formulae are not sufficiently developed, and only Bochner-Martinelli type formulae are known (cf. \cite{WangH,wang15}). As in the theory of several complex variables, the  formulae with Bochner-Martinelli type kernels  are not good enough to prove the extension phenomenon.

Given a differential complex, it is a fundamental problem   to investigate  its cohomology group or its Poincar\'e lemma over a domain (cf. e.g. \cite{Brinkschulte,Hill}). In particular, we hope to    solve the nonhomogeneous tangential $k$-Cauchy-Fueter equation
\begin{align}\label{equ}
\mathscr{D}_0u=f,
\end{align}
for    $f\in L^2(\mathscr{H},\mathscr{V}_1)$,  under  the compatibility condition
\begin{align}\label{equ1}
\mathscr{D}_1f=0,
\end{align}
i.e. $f$ is $\mathscr{D}_1$-closed. If we can find compactly supported solution of (\ref{equ})-(\ref{equ1}) when $f$ is compactly supported, it is a standard  procedure  to derive  Hartogs' phenomenon (cf. e.g. \cite{Homander,wang15}). One way to solve (\ref{equ})-(\ref{equ1}) is to consider the associated Hodge-Laplacian
\begin{equation}\begin{aligned}\label{Box}
\Box_1 =\mathscr{D}_0\mathscr{D}_0^*+\mathscr{D}_1^*\mathscr{D}_1:L^2(\mathscr{H},\mathscr{V}_1)\rightarrow L^2(\mathscr{H}, \mathscr{V}_1).
\end{aligned}
\end{equation}
By identifying $\mathscr{V}_1=\odot^{k-1}\mathbb{C}^2\otimes\mathbb{C}^{2n}$  with $\mathbb{C}^{2nk},$ we can see that $\Box_1$ is a $(2kn)\times(2kn)$-matrix valued differential operator of second order, which is not diagonal (cf. Appendix   for the expression in  the case $n=2,k=2$). So it is not easy to verify the subellipticity of $\Box_1$ and find its fundamental solution. While in the complex case, the Hodge-Laplacian associated to $\overline{\partial}_b$-complex is diagonal and it is  easy to find its fundamental solution (cf. \cite{Folland}).

By using the $L^2$ method, we  establish the  following  estimate: when ${\rm dim}\ \mathscr{H}\geq19,$ there exists some constant $c>0$ such that
\begin{align}\label{1.9'}
\|\mathscr{D}_{0}^*f\|^2+\|\mathscr{D}_{1}f\|^2\geq c\langle\Delta_bf,f\rangle
\end{align}
for  $f\in C^2\left(\mathscr{H},\mathscr{V}_1\right)\cap L^2\left(\mathscr{H},\mathscr{V}_1\right),$   where $\Delta_b$ is the SubLaplacian on the right quaternionic Heisenberg group. But $\langle\Delta_bf,f\rangle$ does not control the $L^2$ norm of $f.$ It only controls $\|f\|^2_{L^{\frac{Q+2}{Q-2}}}$  by the well known Sobolev inequality  \cite{Ivanov10}, where $Q=4n+6$ is the homogeneous dimension of $\mathscr{H}.$ To avoid this difficulty, we consider the locally flat compact manifold $\mathscr{H}/\mathscr{H}_{\mathbb{Z}},$ where
\begin{align}\label{dis h}
\mathscr{H}_{\mathbb{Z}}:=\mathbb{Z}^{4n+3}
\end{align}
is a lattice of $\mathscr{H}.$ It is a spherical qc manifold (cf. \cite{Shi}).
Because the selfadjoint subelliptic operator $\Delta_b$ over a compact manifold has discrete spectra, $\langle\Delta_bf,f\rangle$ controls the $L^2$ norm of $f$ for  $f\perp {\rm ker}\,\Delta_b.$ Moreover, by the    Poincar\'e-type  inequality we can show  ${\rm ker}\,\Delta_b$ consisting  of constant vectors. Namely there exists some $c''>0$ such that
\begin{align}\label{1.9}
\langle\Delta_bf,f\rangle\geq c''\|f\|^2
\end{align}
for $f\in C^2(\mathscr{H}/\mathscr{H}_{\mathbb{Z}},\mathscr{V}_1)$ and  $f\perp$ {constant} vectors. It is a standard way to use the $L^2$ estimate to solve the non-homogeneous tangential $k$-Cauchy-Fueter equation (\ref{equ})-(\ref{equ1}) on $\mathscr{H}/\mathscr{H}_\mathbb{Z}$. The solution has an important  vanishing property  which allows us to prove Hartogs' phenomenon. See also \cite{Folland} for the existence theorem for $\overline{\partial}_b$-equation over compact CR manifolds by establishing a priori estimate.

In Section 2, we give preliminaries on the right  quaternionic Heisenberg group, the horizontal complex vector fields $Z_A^{A'}$'s and nice behavior of their commutators. We also give the definition of the tangential $k$-Cauchy-Fueter operators and their basic properties. It is checked directly that (\ref{cf})-(\ref{V}) is a complex. We compare the complexes on the left and right  quaternionic Heisenberg groups.  In Section 3, we use integration by part and  Poincar\'e-type inequality to show the $L^2$ estimate (\ref{1.9'}) (\ref{1.9}) for the tangential $k$-Cauchy-Fueter operator. In Section 4, we use the $L^2$ estimate to solve the nonhomogeneous tangential $k$-Cauchy-Fueter equation (\ref{equ})-(\ref{equ1}) over the quotient manifold $\mathscr{H}/\mathscr{H}_{\mathbb{Z}} $, and derive the Hartogs' phenomenon. In Section 5, we construct the nilpotent Lie groups of step two associated to   quadratic hypersurfaces. By constructing a diffeomorphism from the group $\mathscr{H}$ to  the hypersurface $\mathcal{S}$ in (\ref{S}), we show that the pushforward of the tangential $k$-Cauchy-Fueter operator  on the group $\mathscr{H} $  coincides with the restriction of the $k$-Cauchy-Fueter operator  on $\mathbb{H}^{n+1}$ to this hypersurface.  Therefore the  restriction of a $k$-regular functions to  $\mathcal{S}$ is  $k$-CF   on $\mathscr{H}.$ $k$-CF functions are abundant because so are $k$-regular functions on $\mathbb{H}^{n+1}$ \cite{Kang}. In the Appendix, we  give the expression of $\Box_1$ for $n=2,k=2.$

\section{The  tangential $k$-Cauchy-Fueter complexes}
\subsection{The right quaternionic Heisenberg group $\mathscr{H}$ and the locally flat compact manifold $\mathscr{H}/\mathscr{H}_\mathbb{Z}$}
The multiplication of the right quaternionic Heisenberg group $\mathscr{H}$ can be written in terms of  real variables (cf.  \cite[(2.13)]{wang22}) as
 \begin{align}\label{mul}
(x,t)\cdot({y},{s})= {\left(x+y,t_{\beta}+s_{\beta}+2
\sum_{l=0}^{n-1}\sum_{j,k=1}^{4}B_{kj}^{\beta}x_{4l+k}y_{4l+j}\right)},
\end{align}
for $x,y\in \mathbb{R}^{4n},\ t,s\in \mathbb{R}^{3},\ \beta=1,2,3,$ where $B_{kj}^{\beta}$ is the $(k,j)$-th entry of the following matrices
\begin{equation}\label{2.14}
\begin{aligned}
B^{1}:=\left(\begin{array}{cccc} 0 & -1 & 0 &0\\ 1& 0& 0& 0\\ 0& 0&0& -1\\0 &0& 1 &0\end{array}\right),   B^{2}:=\left(\begin{array}{cccc} 0 & 0 &
-1 &0\\ 0& 0& 0& 1\\ 1& 0&0& 0\\0 &-1& 0 &0\end{array}\right),
B^{3}:=\left(\begin{array}{cccc} 0 & 0 & 0 &-1\\ 0& 0& -1& 0\\ 0& 1&0& 0\\1 &0& 0 &0\end{array}\right),
\end{aligned}
\end{equation}
satisfying the commutating relation of quaternions
$(B^{1})^{2}=(B^{2})^{2}=(B^{3})^{2}=-id,\ B^{1}B^{2}=B^{3}.$
This is because for $x=x_{1}+x_{2}\textbf{i}+x_{3}\textbf{j}+x_{4}\textbf{k}$ and $x'=x'_{1}+x'_{2}\textbf{i}+x'_{3}\textbf{j}+x'_{4}\textbf{k},$ we have
\begin{equation*}\label{584}
\begin{aligned}
{\rm Im}(x\overline{x'})
&=(-x_{1}x'_{2}+x_{2}x'_{1}-x_{3}x'_{4}+x_{4}x'_{3})\textbf{i}
+(-x_{1}x'_{3}+x_{3}x'_{1}+x_{2}x'_{4}-x_{4}x'_{2})\textbf{j}\\
&\quad+(-x_{1}x'_{4}+x_{4}x'_{1}-x_{2}x'_{3}+x_{3}x'_{2})\textbf{k}
=\sum_{\beta=1}^{3}\sum_{k,j=1}^{4}B_{kj}^{\beta}x_{k}x'_{j}\textbf{i}_{\beta},
\end{aligned}
\end{equation*}where  $\textbf{i}_0=1,\textbf{i}_1=\textbf{i},
\textbf{i}_2=\textbf{j},\textbf{i}_3=\textbf{k}$.
For fixed point $(y,s)\in\mathscr{H},$ the \emph{left translate}
$\tau_{(y,s)}:\mathscr{H}\longrightarrow\mathscr{H},$ $
(x,t)\longmapsto (y,s)\cdot(x,t),$
is an affine  transformation given by a lower triangular matrix by (\ref{mul}). So the Lebesgue  measure on $\mathbb{R}^{4n+3}$ is an invariant measure under the left translation of  $\mathscr{H}.$ Recall that we have the following left invariant vector fields on $\mathscr{H} $:
\begin{equation} \label{eq:left-invariant}
(Y_{a}f)(y ,s )=\left.\frac{\hbox{d}}{\hbox{d}t}f((y ,s )(te_{a},0))
\right|_{t=0} , \qquad a=1,2,\ldots,4n,
 \end{equation}
where $e_{a}$ is $(0,\cdots,1,\cdots,0)$ with only the $a$-th entry equal to $1$.
Then
\begin{align}\label{2.43}
Y_{4l+j}:=\frac{\partial}{\partial y_{4l+j}}+2\sum_{\beta=1}^{3}\sum_{k=1}^{4}B^{\beta}_{kj}y_{4l+k}
\frac{\partial}{\partial s_{\beta}},
\end{align}
  whose brackets are
\begin{equation} \label{2.16'}
  [Y_{4l+k},Y_{4l+j}]  =4
\sum_{\beta=1}^{3}B_{kj}^{\beta}\partial_{s_{\beta}},
\qquad {\rm and }\qquad
[Y_{4l+k},Y_{4l'+j}] =0\quad{\rm for}\  l\neq l',
 \end{equation}
where  $l,l'=0,1,\cdots,n-1,$ $j,k=1,\cdots,4.$ The \emph{SubLaplacian} is defined as
\begin{align}\label{sub}
\Delta_b:=-\sum_{a=1}^{4n}Y_{a}^2.
\end{align}

The norm of the right quaternionic Heisenberg group $\mathscr{H}$ is defined by
\begin{align}\label{124}
\|(y,{s})\|:=(|y|^{4}+|{s}|^{2})^{\frac{1}{4}}.
\end{align}

Define  balls  $B(\xi,r):=\{\eta\in\mathscr{H} ;\|\xi^{-1}\cdot\eta\|<r\}$
for $\xi\in\mathscr{H}, r>0.$ The fundamental set of $\mathscr{H}$ under the action of the lattice $\mathscr{H}_{\mathbb{Z}}$ in (\ref{dis h}) is
\begin{align}\label{fp}
\mathscr{F}=\{\left.(y,s)
\in\mathscr{H}\right|0\leq y_a<1,0\leq s_\beta<1,a=1,\cdots,4n,\beta=1,2,3\}.
\end{align}
$\mathscr{H}/\mathscr{H}_{\mathbb{Z}}$ is equivalent to $\mathscr{F}$ as a set.
\begin{prop}\label{p2.1}
$\mathscr{H}$ is the disjoint union of $\tau_{(n,m)}\mathscr{F}$ with  $(n,m)\in\mathscr{H}_{\mathbb{Z}}.$
\end{prop}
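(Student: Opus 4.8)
The plan is to show that each point of $\mathscr{H}=\mathbb{R}^{4n+3}$ lies in exactly one translate $\tau_{(n,m)}\mathscr{F}$; this is precisely the disjoint-union statement, existence giving the covering $\mathscr{H}=\bigcup_{(n,m)}\tau_{(n,m)}\mathscr{F}$ and uniqueness giving pairwise disjointness. Since $\tau_{(n,m)}^{-1}=\tau_{(n,m)^{-1}}$, a point $(x,t)$ lies in $\tau_{(n,m)}\mathscr{F}$ iff $\tau_{(n,m)}^{-1}(x,t)\in\mathscr{F}$, so the task reduces to solving this membership condition for $(n,m)\in\mathscr{H}_{\mathbb{Z}}$. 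First I would compute the inverse in the group: because each $B^\beta$ in (\ref{2.14}) is antisymmetric, the quadratic form $\sum_{k,j}B^\beta_{kj}n_{4l+k}n_{4l+j}$ vanishes, so a direct check with (\ref{mul}) gives $(n,m)^{-1}=(-n,-m)$, and hence
\[
\tau_{(n,m)}^{-1}(x,t)=(-n,-m)\cdot(x,t)=\Big(x-n,\ t_\beta-m_\beta-2\sum_{l,k,j}B^\beta_{kj}n_{4l+k}x_{4l+j}\Big).
\]

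Next I would solve the membership condition $\tau_{(n,m)}^{-1}(x,t)\in\mathscr{F}=[0,1)^{4n+3}$ (see (\ref{fp})) by exploiting the lower-triangular structure of left translation recorded after (\ref{mul}). The first $4n$ coordinates impose $x_a-n_a\in[0,1)$, which involve $n$ alone and are solved by the unique choice $n_a=\lfloor x_a\rfloor$. With $n$ thereby fixed, each cross term $c_\beta:=2\sum_{l,k,j}B^\beta_{kj}n_{4l+k}x_{4l+j}$ is a determined real constant, and the remaining three conditions $t_\beta-m_\beta-c_\beta\in[0,1)$ are solved by the unique integer $m_\beta=\lfloor t_\beta-c_\beta\rfloor$. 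Thus for every $(x,t)$ there is exactly one $(n,m)\in\mathbb{Z}^{4n+3}$ with $\tau_{(n,m)}^{-1}(x,t)\in\mathscr{F}$.

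Existence of this $(n,m)$ proves that the translates cover $\mathscr{H}$, and uniqueness proves disjointness: if $(x,t)\in\tau_{(n,m)}\mathscr{F}\cap\tau_{(n',m')}\mathscr{F}$, then both lattice elements satisfy the membership condition, forcing $(n,m)=(n',m')$. Along the way I would record, for the sake of calling $\mathscr{H}_{\mathbb{Z}}$ a lattice, that $\mathbb{Z}^{4n+3}$ is indeed a subgroup of $\mathscr{H}$: by (\ref{mul}) the twisting term $2\sum B^\beta_{kj}n_{4l+k}n'_{4l+j}$ is an integer since $B^\beta_{kj}\in\{0,\pm1\}$, and $(n,m)^{-1}=(-n,-m)$ is integral as well.

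The computation is elementary and I do not expect a genuine analytic difficulty. The only point requiring care is the noncommutativity of $\mathscr{H}$, which prevents solving all $4n+3$ coordinates simultaneously and instead forces the sequential order: determine the $4n$ horizontal coordinates first, then the three central ones. That sequential (triangular) solvability is exactly the content of the remark that $\tau_{(y,s)}$ is affine and given by a lower triangular matrix, so this is the structural observation I would isolate and lean on.
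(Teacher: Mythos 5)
Your proof is correct and follows essentially the same route as the paper's: both exploit the triangular structure of left translation to determine the horizontal integer part $n=\lfloor x\rfloor$ first and then the central part $m$, obtaining existence and uniqueness together. Your reformulation via the membership condition $\tau_{(n,m)}^{-1}(x,t)\in\mathscr{F}$ (using $(n,m)^{-1}=(-n,-m)$, which is valid by the antisymmetry of the $B^\beta$) is just a repackaging of the paper's equation $(y,s)=(n,m)\cdot(y',s')$, and if anything makes the disjointness step slightly more transparent.
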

\begin{proof}
We need to prove that  for any $(y,s)\in\mathscr{H},$ there exist unique  $(y',s')\in\mathscr{F}$ and $(n,m)\in\mathscr{H}_{\mathbb{Z}}$ such that $(y,s)=(n,m)\cdot(y',s').$ Let $ (n_a,m_a)\in\mathscr{H}_{\mathbb{Z}},a=1,2.$
By the multiplication law (\ref{mul}), we have
\begin{equation}\label{eq}
\begin{aligned}
(n_a,m_a)\cdot({y},{s})=\left(n_a+y,(m_a)_\beta+s_{\beta}+2
\sum_{l=0}^{n-1}\sum_{j,k=1}^{4}B_{kj}^{\beta}(n_a)_{4l+k}y_{4l+j}\right).
\end{aligned}
\end{equation}
If $n_1\neq n_2,$ the $y$-coordinates of $(n_1,m_1)\cdot({y},{s})$ and $(n_2,m_2)\cdot({y},{s})$ are $n_1+y$ and $n_2+y,$ respectively,   which are different.  If $n_1= n_2,m_1\neq m_2,$ we see that their $s$-coordinates in (\ref{eq}) must be different. This proves the uniqueness.

For $(y,s)=(y_1,\cdots,y_{4n},s_1,s_2,s_3),$ we can choose $y'\in\mathbb{R}^{4n}$ with $0\leq y_j'<1 $  and $n\in \mathbb{Z}^{4n}$ such that
$y_j={n}_j+y_j' $. Then we can determine $s'\in\mathbb{R}^{3}$ and $m\in \mathbb{Z}^{3}$ satisfying
\begin{equation*}
    m_{\beta}+s'_\beta=s_\beta-2
\sum_{l=0}^{n-1}\sum_{j,k=1}^{4n}B_{kj}^{\beta}n_{4l+k}y'_{4l+j},\ \ {\rm with}\ 0\leq s_\beta'<1,\end{equation*}
for $\beta=1,2,3.$ So $\mathscr{H}$ is the disjoint union of $\tau_{(n,m)}\mathscr{F}.$ The proposition is proved.
\end{proof}
$\mathscr{H}/\mathscr{H}_{\mathbb{Z}}$ has the structure of a locally flat  manifold as follows (cf.  \cite[p. 238]{Kobayashi}). Let $\pi:\mathscr{H}\rightarrow \mathscr{H}/\mathscr{H}_{\mathbb{Z}}$ be the projection. We can find a finite number of balls $B(\xi_j,r),\ j=1,\cdots,N,$
covering $\mathscr{F}$ with $r$ sufficiently small so  that $\tau_{(n,m)} B(\xi_j,r)\cap B(\xi_j,r)=\emptyset$ for any $(0,0)\neq(n,m)\in\mathscr{H}_{\mathbb{Z}}.$ Note that $\pi B(\xi_i,r)\cap \pi B(\xi_j,r)\neq\emptyset$ for $i\neq j$ if and only if there exist unique ${(n,m)}\in\mathscr{H}_{\mathbb{Z}},$ such that
\begin{align}\label{non}
\tau_{(n,m)}B(\xi_i,r)\cap B(\xi_j,r)\neq\emptyset.
\end{align}
Then we can construct coordinates charts $(\pi B(\xi_j,r),\phi_j),$ where $\phi_j:\pi B(\xi_j,r)\rightarrow B(\xi_j,r)$ and the  transition function $\phi_j\circ \phi_i^{-1}$ is given by $\tau_{(n,m)}$ for some $(n,m)\in\mathscr{H}_{\mathbb{Z}}$ such that (\ref{non}) holds.

A function  is called \emph{periodic} on $\mathscr{H}$ if
\begin{align*}
f(y,s)=f((n,m)(y,s))
\end{align*}
for any $(n,m)\in\mathscr{H}_{\mathbb{Z}}.$
A function over $\mathscr{H}/\mathscr{H}_{\mathbb{Z}}$ can be viewed as a function on $\mathscr{F}$ and  be extended   to a periodic function on $\mathscr{H}$ by
\begin{align}\label{3.30}
{f}(y,s)=f((n,m)\cdot(y',s'))=f(y',s'),
\end{align}
for $(y,s)=(n,m)\cdot(y',s')$ and $(y',s')\in\mathscr{F}.$ If $f$ is periodic, then so is $Y_af$ for any $a.$  This is because
\begin{equation*}\begin{aligned}
(Y_{a}f)(y',s')=\left.\frac{\hbox{d}}{\hbox{d}t}f((y',s')(te_{a},0))
\right|_{t=0}=\left.\frac{\hbox{d}}{\hbox{d}t}f((n,m)(y',s')(te_{a},0))
\right|_{t=0}=(Y_{a}f)(y,s),
\end{aligned}\end{equation*}
for $e_{a} $  as in (\ref{eq:left-invariant}). Thus the action of $Y_a$ on functions over  $\mathscr{H}/\mathscr{H}_{\mathbb{Z}}$ is well-defined, i.e. it is a vector field over $\mathscr{H}/\mathscr{H}_{\mathbb{Z}}.$
\subsection{Complex horizontal  vector fields $Z_A^{A'}$'s  and the tangential $k$-Cauchy-Fueter operator}
We consider the following complex horizontal left invariant vector fields on $\mathscr{H} $:
\begin{align}\label{ZAAA}
\left(Z_{AA'}\right):=\left(\begin{array}{ll} Y_{1}+\textbf{i}Y_{2}& -Y_{3}-\textbf{i}Y_{4}\\ Y_{3}-\textbf{i}Y_{4}&\ \  Y_{1}-\textbf{i}Y_{2}\\\ \ \ \ \ \vdots&\ \ \ \ \ \ \ \vdots\\ Y_{4l+1}+\textbf{i}Y_{4l+2}& -Y_{4l+3}-\textbf{i}Y_{4l+4}\\Y_{4l+3}-\textbf{i}Y_{4l+4}& \ \ Y_{4l+1}-\textbf{i}Y_{4l+2}\\ \ \ \ \ \vdots&\ \ \ \ \ \ \ \vdots\end{array}\right)
\end{align}
where $A=0,1,\cdots,2n-1,$ $A'=0',1'.$ It is motivated by the embedding $\tau$ of  quaternionic algebra $\mathbb{H}$ into  $\mathfrak{gl}(2,\mathbb{C}):$
\begin{align}\label{tau}
\tau(x_{1}+x_{2}\textbf{i}+x_{3}\textbf{j}+x_{4}\textbf{k})=
\left(\begin{array}{rr} x_{1}+\textbf{i}x_{2}& -x_{3}-\textbf{i}x_{4}\\ x_{3}-\textbf{i}x_{4}& x_{1}-\textbf{i}x_{2}\end{array}\right)
\end{align}
and vector fields
\begin{align}\label{nabla}
\left(\nabla_{AA'}\right):=\left(\begin{array}{ll} \partial_{x_{1}}+\textbf{i}\partial_{x_{2}}& -\partial_{x_{3}}-\textbf{i}\partial_{x_{4}}\\ \partial_{x_{3}}-\textbf{i}\partial_{x_{4}}&\ \  \partial_{x_{1}}-\textbf{i}\partial_{x_{2}}\\\ \ \ \ \ \vdots&\ \ \ \ \ \ \ \vdots\\ \partial_{x_{4l+1}}+\textbf{i}\partial_{x_{4l+2}}& -\partial_{x_{4l+3}}-\textbf{i}\partial_{x_{4l+4}}\\\partial_{x_{4l+3}}
-\textbf{i}\partial_{x_{4l+4}}& \ \ \partial_{x_{4l+1}}-\textbf{i}\partial_{x_{4l+2}}\\ \ \ \ \ \vdots&\ \ \ \ \ \ \ \vdots\end{array}\right)
\end{align}
to define the $k$-Cauchy-Fueter operators on $\mathbb{H}^{n+1}$ in \cite{wang15}.
We will use  matrices
\begin{align}\label{epsilon}
(\varepsilon_{A'B'})=\left(\begin{array}{cc} 0&1\\ -1&0\end{array}\right),\quad (\varepsilon^{A'B'})=\left(\begin{array}{cc} 0&-1\\ 1&0\end{array}\right)
\end{align}
 to raise or lower primed indices, e.g.
$Z_{A}^{A'}=\sum_{B'=0',1'} Z_{AB'}\varepsilon^{B'A'}.$ Here $(\varepsilon^{A'B'})$ is the inverse of $(\varepsilon_{A'B'}).$ Then $$Z_A^{0'}=Z_{A1'},\quad Z_A^{1'}=-Z_{A0'},$$ and
\begin{align}\label{ZAA}
\left(Z_A^{A'}\right)=\left(\begin{array}{ll} \hskip 3mm \vdots& \hskip 3mm \vdots\\Z_{2l}^{0'}& Z_{2l}^{1'}\\   Z_{2l+1}^{0'}& Z_{2l+1}^{1'}\\ \hskip 3mm  \vdots&  \hskip 3mm \vdots\end{array}\right)
=\left(\begin{array}{ll} \ \ \ \ \ \ \ \vdots&\ \ \ \ \ \ \ \vdots\\-Y_{4l+3}-\textbf{i}Y_{4l+4}& -Y_{4l+1}-\textbf{i}Y_{4l+2}\\\ \ Y_{4l+1}-\textbf{i}Y_{4l+2}& -Y_{4l+3}+\textbf{i}Y_{4l+4}\\ \ \ \ \ \ \ \ \vdots&\ \ \ \ \ \ \ \vdots\end{array}\right).
\end{align}

An element of $\mathbb{C}^{2}$ is denoted by $(f_{A'})$ with $A'=0',1'.$ The symmetric power $\odot^{p}\mathbb{C}^{2}$ is a subspace of $\otimes^{p}\mathbb{C}^2,$ whose element  is  a $2^p$-tuple  $(f_{A'_{1}A'_2\cdots A'_{p}})$ with $A'_{1},A'_2,\cdots,A'_{p}=0',1',$   such that $f_{A'_{1}A'_2\cdots A'_{p}}\in\mathbb{C}$ are invariant under  permutations of subscripts, i.e. $$f_{A'_{1}A'_2\cdots A'_{p}}=f_{A'_{\sigma(1)}A'_{\sigma(2)}\cdots A'_{\sigma(p)}}$$
for any $\sigma$ in the group $S_p$ of permutations  of $p$ letters.  An element of $\odot^{p}\mathbb{C}^2\otimes\wedge^q\mathbb{C}^{2n}$ is given by a tuple $(f_{A_1'\cdots A_p'A_1 \cdots A_q})\in(\otimes^{p}\mathbb{C}^2)\otimes (\otimes^q\mathbb{C}^{2n}),$ which is invariant under permutations of  subscripts of $A_1',\cdots,A_p',$ and  antisymmetric under permutations of   subscripts of $A_1,\cdots,A_q  =0,1,\cdots 2n-1.$ In the sequel, we will write $f_{A A_2'A_3'\cdots A_k'}:=f_{A_2'A_3'\cdots A_k'A}$ and $f_{A_3'\cdots A_k'AB}:=f_{ABA_3'\cdots A_k'}$ for convenience.  We will use {\it symmetrisation} of primed indices
\begin{align}\label{sym}
f_{\cdots(A_1'\cdots A_p')\cdots}:=\frac{1}{p!}\sum_{\sigma\in S_p}f_{\cdots A_{\sigma(1)}'\cdots A_{\sigma(p)}'\cdots}.
\end{align}
The  tangential $k$-Cauchy-Fueter operator  in (\ref{cf}) is given by
\begin{equation}\begin{aligned}\label{zy}
(\mathscr{D}_{0}f)_{AA'_2\cdots A'_k}:=\sum_{A'_1=0',1'}Z_{A}^{A'_1}f_{A'_1 A'_2\cdots A'_k},
\end{aligned}\end{equation}
for $f\in C^1(\Omega,\mathscr{V}_0) $.
The {\it $k$-Cauchy-Fueter operator} on $\mathbb{H}^{n+1}$ \cite{wang15} is $\widehat{\mathscr{D}}_0: C^1(\mathbb{H}^{n+1},\mathscr{V}_0)\rightarrow C^1(\mathbb{H}^{n+1},\mathscr{V}_1)$ with
\begin{align*}
\left(\widehat{\mathscr{D}}_{0}f\right)_{A_2'\cdots A_k'A}:=\sum_{B'=0',1'}\nabla_{A}^{B'}f_{B' A_2'\cdots A_k'},
\end{align*}
where $\nabla$  is given by (\ref{nabla}).
A $\mathscr{V}_0$-valued distribution $f$  is called \emph{$k$-regular} on $\Omega\in\mathbb{H}^{n+1}$ if $\widehat{\mathscr{D}}_0f=0$ on $\Omega$ in the sense of distributions.

\subsection{Commutators of complex horizontal vector fields}
The following nice behavior of commutators of $Z_A^{A'}$'s plays a very important role to show that (\ref{cf}) is a complex and to establish the $L^2$-estimate. It is also the reason why the tangential $k$-Cauchy-Fueter complex on the right Heisenberg group is simpler than that on the left one.
\begin{lem}\label{l3.1}
$(1)$ Vector fields in each column in (\ref{ZAA}) are commutative, i.e. for fixed $A'=0'\ {\rm or}\ 1',$
\begin{align}\label{3.4}
[Z_A^{A'},Z_B^{A'}]=0,
\end{align}for any $A,B=0,\cdots,2n-1.$\\
$(2)$ We have
\begin{equation}\begin{aligned}\label{eq:3.4'}
&[Z_{2l}^{0'},Z_{2l}^{1'}]=
\overline{[Z_{2l+1}^{0'},Z_{2l+1}^{1'}]}
=8\left(\partial_{s_2}+\mathbf{i}\partial_{s_3}\right),\\
&[Z_{2l}^{0'},Z_{2l+1}^{1'}]
=[Z_{2l+1}^{0'},Z_{2l}^{1'}]=8\mathbf{i}\partial_{s_1},
\end{aligned}\end{equation}
$l=0,\cdots,n-1,$ and any other bracket  vanishes.
\end{lem}
\begin{proof}
$(1)$ If $\{A,B\}\neq\{2l,2l+1\}$ for any integer $l,$ we have
\begin{align*}
[Z_A^{A'},Z_B^{B'}]=0,\quad{\rm for}\  A',B'=0',1',
\end{align*}
by using (\ref{2.16'}) because   $Z_A^{A'}$ and $Z_B^{B'}$ only involve $Y_{4l+j}$'s for different  $l.$ It follows from (\ref{2.14}) (\ref{2.16'}) that
\begin{equation}\begin{aligned}\label{com}
\ [Y_{4l+1},Y_{4l+2}]&=\ \ [Y_{4l+3},Y_{4l+4}]=-4\partial_{s_1},\\
[Y_{4l+1},Y_{4l+3}]&=-[Y_{4l+2},Y_{4l+4}]=-4\partial_{s_2},\\
[Y_{4l+1},Y_{4l+4}]&=\ \ [Y_{4l+2},Y_{4l+3}]=-4\partial_{s_3}.
\end{aligned}\end{equation}
Then for $\{A,B\}=\{2l,2l+1\},$ we have
\begin{equation*}\begin{aligned}
\ [Z_{2l}^{0'},Z_{2l+1}^{0'}]=&[-Y_{4l+3}-
\mathbf{i}Y_{4l+4},Y_{4l+1}-\mathbf{i}Y_{4l+2}]\\
=&[Y_{4l+1},Y_{4l+3}]+[Y_{4l+2},Y_{4l+4}]-\mathbf{i}[Y_{4l+2},
Y_{4l+3}]+\mathbf{i}[Y_{4l+1},Y_{4l+4}]=0,\\
\ [Z_{2l}^{1'},Z_{2l+1}^{1'}]=&[-Y_{4l+1}-\mathbf{i}Y_{4l+2},-Y_{4l+3}
+\mathbf{i}Y_{4l+4}]\\
=&[Y_{4l+1},Y_{4l+3}]+[Y_{4l+2},Y_{4l+4}]+\mathbf{i}[Y_{4l+2},
Y_{4l+3}]-\mathbf{i}[Y_{4l+1},Y_{4l+4}]=0,
\end{aligned}\end{equation*}
by (\ref{com}).  Then (\ref{3.4}) follows.\\
$(2)$ Similarly  we have
\begin{equation*}\begin{aligned}
\ [Z_{2l}^{0'},Z_{2l}^{1'}]=&[-Y_{4l+3}-\mathbf{i}
Y_{4l+4},-Y_{4l+1}-\mathbf{i}Y_{4l+2}]
=-[Y_{4l+1},Y_{4l+3}]+[Y_{4l+2},Y_{4l+4}]\\&\quad-\mathbf{i}[Y_{4l+2},
Y_{4l+3}]-\mathbf{i}[Y_{4l+1},Y_{4l+4}]=
8(\partial_{s_{2}}+\mathbf{i}\partial_{s_{3}}),\\
\ [Z_{2l+1}^{0'},Z_{2l+1}^{1'}]=&\overline{[Z_{2l}^{0'},Z_{2l}^{1'}]}
=8(\partial_{s_{2}}-\mathbf{i}\partial_{s_{3}}),\\
\ [Z_{2l}^{0'},Z_{2l+1}^{1'}]=&[-Y_{4l+3}-
\mathbf{i}Y_{4l+4},-Y_{4l+3}+\mathbf{i}Y_{4l+4}]
=-2\mathbf{i}[Y_{4l+3},Y_{4l+4}]
=8\mathbf{i}\partial_{s_{1}},\\
\ [Z_{2l+1}^{0'},Z_{2l}^{1'}]=&[Y_{4l+1}-\mathbf{i}Y_{4l+2},-Y_{4l+1}-
\mathbf{i}Y_{4l+2}]
=-2\mathbf{i}[Y_{4l+1},Y_{4l+2}]
=8\mathbf{i}\partial_{s_{1}},
\end{aligned}\end{equation*}
by (\ref{com}). The lemma is proved.
\end{proof} On the left quaternionic  Heisenberg group, vector fields in each column in (\ref{eq:3.4'}) are not commutative.
We have the following corollary directly by the above Lemma \ref{l3.1} $(2)$.
\begin{cor}\label{cc}
\begin{align}
[Z_{A}^{0'},Z_{B}^{1'}]+[Z_{A}^{1'},Z_{B}^{0'}]=0,
\end{align}for any $A,B=0,\cdots,2n-1.$
\end{cor}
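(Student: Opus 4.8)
The plan is to prove the identity by a short case analysis on the pair $(A,B)$, using the complete list of nonzero commutators supplied by Lemma \ref{l3.1}. The key observation is that Lemma \ref{l3.1}$(2)$ already records every nonzero bracket: among all $[Z_A^{C'},Z_B^{D'}]$ with $C',D'\in\{0',1'\}$, the only ones that can fail to vanish occur either when $A=B$ or when $\{A,B\}=\{2l,2l+1\}$ for some $l$. Hence the combination $[Z_{A}^{0'},Z_{B}^{1'}]+[Z_{A}^{1'},Z_{B}^{0'}]$ is automatically zero whenever $A$ and $B$ lie in different blocks, and these two configurations are the only ones I have to examine.

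First I would dispose of the diagonal case $A=B$: here the two summands are $[Z_A^{0'},Z_A^{1'}]$ and $[Z_A^{1'},Z_A^{0'}]=-[Z_A^{0'},Z_A^{1'}]$, which cancel by antisymmetry of the bracket, regardless of their actual value. Next, for $\{A,B\}=\{2l,2l+1\}$ with $A\neq B$, say $A=2l$ and $B=2l+1$, I would substitute the explicit values from Lemma \ref{l3.1}$(2)$: one has $[Z_{2l}^{0'},Z_{2l+1}^{1'}]=8\mathbf{i}\partial_{s_1}$, while $[Z_{2l}^{1'},Z_{2l+1}^{0'}]=-[Z_{2l+1}^{0'},Z_{2l}^{1'}]=-8\mathbf{i}\partial_{s_1}$, so their sum is $0$. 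The case $A=2l+1,\,B=2l$ is identical after relabeling, and all remaining pairs give two vanishing brackets.

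The one genuinely load-bearing input — and the place where the special structure of the \emph{right} group enters — is the symmetry $[Z_{2l}^{0'},Z_{2l+1}^{1'}]=[Z_{2l+1}^{0'},Z_{2l}^{1'}]$ in Lemma \ref{l3.1}$(2)$: both cross-block brackets equal the same field $8\mathbf{i}\partial_{s_1}$. It is precisely this equality that, combined with the antisymmetry of $[\cdot,\cdot]$, forces the symmetrized combination in the corollary to cancel. There is thus no real analytic obstacle; the only thing to be careful about is tracking the index swaps and the resulting signs correctly across the handful of cases, after which Corollary \ref{cc} follows at once.
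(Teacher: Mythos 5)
Your proof is correct and follows exactly the route the paper intends: the paper states that Corollary \ref{cc} follows ``directly'' from Lemma \ref{l3.1}$(2)$, and your case analysis (diagonal case by antisymmetry, the block $\{2l,2l+1\}$ via the equality $[Z_{2l}^{0'},Z_{2l+1}^{1'}]=[Z_{2l+1}^{0'},Z_{2l}^{1'}]=8\mathbf{i}\partial_{s_1}$, and all other pairs vanishing) is precisely the verification being left to the reader. Nothing is missing.
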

\subsection{The tangential $k$-Cauchy-Fueter complex} Differential operators in the complex (\ref{cf}) are as follows.
For $j=0,1,\cdots,k-1,$   $\mathscr{D}_j:C^{\infty}(\Omega,\mathscr{V}_j)\rightarrow C^{\infty}(\Omega,\mathscr{V}_{j+1})$ with $\mathscr{V}_j=\odot^{k-j}\mathbb{C}^2\otimes\wedge^j\mathbb{C}^{2n} $  is a differential operators of  first order  given by
 \begin{equation}\label{eq:D-j-1}
\left(\mathscr{D}_jf\right)_{A_0\cdots A_jA_1'\cdots A_{k-j-1}'}=(j+1)\sum_{A'=0',1'}Z_{[A_0}^{A'}f_{A_1\cdots A_j] A'A_1'\cdots A_{k-j-1}'},
 \end{equation}
where $[A_0A_1\cdots A_j]$ is the {\it antisymmetrisation} of unprimed indices  given by
\begin{align}\label{antisym}
f_{\cdots[A_1\cdots A_p]\cdots}:=\frac{1}{p!}\sum_{\sigma\in S_p}{\rm sign}(\sigma)f_{\cdots A_{\sigma(1)}\cdots A_{\sigma(p)}\cdots}.
\end{align}In particular,  $h_{[AB]}:=\frac{1}{2}(h_{AB}-h_{BA})$.  By definition, we have
\begin{align}\label{bra}
f_{\cdots[A_1\cdots[A_j\cdots A_l]\cdots A_p]\cdots}=f_{\cdots[A_1\cdots A_j\cdots A_l \cdots A_p]\cdots}.
\end{align}  $\mathscr{D}_k:C^{\infty}(\Omega,\mathscr{V}_k)\rightarrow C^{\infty}(\Omega,\mathscr{V}_{k+1})$ with $\mathscr{V}_k=\wedge^k\mathbb{C}^{2n}$ and $\mathscr{V}_{k+1}=\wedge^{k+2}\mathbb{C}^{2n}$  is a differential operator  of  second order given by
 \begin{equation}\label{eq:D-j-2}\left(\mathscr{D}_{k}f\right)_{A_1\cdots A_{k+2}}=(k+2)Z_{[A_1}^{0'}Z_{A_2}^{1'}f_{A_3\cdots A_{{k+2}}]}.
 \end{equation}
For $j=k+1,\cdots,2n-2,$ $\mathscr{D}_j:C^{\infty}(\Omega,\mathscr{V}_j)\rightarrow C^{\infty}(\Omega,\mathscr{V}_{j+1})$ with $\mathscr{V}_j=\odot^{j-k-1}\mathbb{C}^2\otimes\wedge^{j+1}\mathbb{C}^{2n} $  is a differential operator  of  first order  given by
 \begin{equation}\label{eq:D-j-3}
    \left(\mathscr{D}_{j}f\right)_{A_1\cdots A_{j+2}}^{A_1'\cdots A_{j-k}'}=(j+2)Z_{[A_1}^{(A_1'}f^{A_2'\cdots A'_{{j-k}})}_{A_2\cdots A_{{j+2}}]}.
 \end{equation}

\begin{rem}\label{rem:kk}
  The   $k$-Cauchy-Fueter complex on $\mathbb{H}^n$ \cite{wang15,wang24} is the same as (\ref{cf})-(\ref{V}) with $\mathscr H$ replaced by $\mathbb{H}^n$  and $Z_{A}^{A'}$ in definition  of $\mathscr{D}_{j}$'s  in  (\ref{eq:D-j-1}) (\ref{eq:D-j-2})  (\ref{eq:D-j-3}) replaced by $\nabla_{A}^{A'}$ in  (\ref{nabla}).
\end{rem}

\begin{lem}\label{ll}
\begin{align}
Z_{[A}^{(A'}Z_{B]}^{B')}=0,
\end{align}
for any $A,B=0,\cdots,2n-1$ and $A',B'=0',1'.$
\end{lem}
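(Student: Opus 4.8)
The plan is to reduce the identity to the commutator relations already recorded in Lemma \ref{l3.1} and Corollary \ref{cc}. The quantity $Z_{[A}^{(A'}Z_{B]}^{B')}$ is antisymmetric in the unprimed pair $A,B$ and symmetric in the primed pair $A',B'$, so it suffices to verify vanishing on the independent components. Antisymmetry in $A,B$ makes the case $A=B$ automatic, so I may assume $A\neq B$; symmetry in $A',B'$ then leaves only the three cases $(A',B')=(0',0'),(1',1'),(0',1')$ to check.

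First I would expand the combined (anti)symmetrization explicitly. The antisymmetrization in $A,B$ turns the product $Z_A^{C'}Z_B^{D'}$ into $\frac{1}{2}\bigl(Z_A^{C'}Z_B^{D'}-Z_B^{C'}Z_A^{D'}\bigr)$, and the subsequent symmetrization in the primed indices gives
\begin{equation*}
Z_{[A}^{(A'}Z_{B]}^{B')}=\frac{1}{4}\left[\left(Z_A^{A'}Z_B^{B'}-Z_B^{A'}Z_A^{B'}\right)+\left(Z_A^{B'}Z_B^{A'}-Z_B^{B'}Z_A^{A'}\right)\right].
\end{equation*}
For the diagonal cases $A'=B'$ the symmetrization is vacuous and the right-hand side collapses to $\frac{1}{2}[Z_A^{A'},Z_B^{A'}]$, which vanishes by Lemma \ref{l3.1}$(1)$ (the vector fields in each column of (\ref{ZAA}) commute). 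For the remaining case $(A',B')=(0',1')$, I would regroup the four terms above into the two commutators $Z_A^{0'}Z_B^{1'}-Z_B^{1'}Z_A^{0'}$ and $Z_A^{1'}Z_B^{0'}-Z_B^{0'}Z_A^{1'}$, so that the expression becomes $\frac{1}{4}\left([Z_A^{0'},Z_B^{1'}]+[Z_A^{1'},Z_B^{0'}]\right)$, which vanishes by Corollary \ref{cc}.

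There is no serious obstacle here: the entire content lies in the bookkeeping of the mixed symmetrization, after which the identity is an immediate consequence of the vanishing and cancellation of commutators established in Lemma \ref{l3.1} and Corollary \ref{cc}. The only point requiring care is pairing the four terms correctly in the off-diagonal case, so that they assemble precisely into the combination $[Z_A^{0'},Z_B^{1'}]+[Z_A^{1'},Z_B^{0'}]$ appearing in Corollary \ref{cc}; once the grouping is done as above, the conclusion is automatic.
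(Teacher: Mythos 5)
Your proposal is correct and follows essentially the same route as the paper: expand the combined antisymmetrization/symmetrization, reduce the diagonal primed cases to $[Z_A^{A'},Z_B^{A'}]=0$ from Lemma \ref{l3.1}$(1)$, and regroup the four terms of the mixed case into $[Z_{A}^{0'},Z_{B}^{1'}]+[Z_{A}^{1'},Z_{B}^{0'}]$, which vanishes by Corollary \ref{cc}. The bookkeeping and the final pairing of terms match the paper's computation exactly.
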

\begin{proof}
Note that \begin{align}\label{1}
2Z_{[A}^{A'}Z_{B]}^{A'}=Z_{A}^{A'}Z_{B}^{A'}-Z_{B}^{A'}Z_{A}^{A'}=[Z_{A}^{A'},
Z_{B}^{A'}]=0,
\end{align}
by (\ref{3.4}), and
\begin{equation*}\begin{split}4
Z_{[A}^{(0'}Z_{B]}^{1')}&=
2Z_{[A}^{0'}Z_{B]}^{1'}+2Z_{[A}^{1'}Z_{B]}^{0'}=Z_{A}^{0'}Z_{B}^{1'}-
Z_{B}^{0'}Z_{A}^{1'}+Z_{A}^{1'}Z_{B}^{0'}-
Z_{B}^{1'}Z_{A}^{0'}\\&=[Z_{A}^{0'},Z_{B}^{1'}]+[Z_{A}^{1'},Z_{B}^{0'}]=0,
\end{split}\end{equation*}
 by Corollary \ref{cc}. The lemma is proved.
\end{proof}
Now let us  check (\ref{cf}) to be a complex by direct calculation as in \cite[Section 3.1]{wang24}.
\begin{thm}
$(\ref{cf})$ is a complex, i.e.
\begin{align}\label{d10}
\mathscr{D}_{j+1}\circ\mathscr{D}_j=0
\end{align}  for each $j$.
\end{thm}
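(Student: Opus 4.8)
The plan is to verify the identity $\mathscr{D}_{j+1}\circ\mathscr{D}_j=0$ by expanding each composition and reducing it, by means of the algebraic identities already established, to one of two elementary vanishing statements. The common mechanism is the same throughout: substitute the defining formulas (\ref{eq:D-j-1})--(\ref{eq:D-j-3}), use (\ref{bra}) to collapse the two nested antisymmetrizations of unprimed indices into a single antisymmetrization over all of them, and then exploit the symmetry of $f$ (and of the intermediate output) on its primed indices. Since the operators come in three shapes (first order of type (\ref{eq:D-j-1}), the single second-order operator $\mathscr{D}_k$ of type (\ref{eq:D-j-2}), and first order of type (\ref{eq:D-j-3})), there are four kinds of consecutive compositions, according to whether $j\le k-2$, $j=k-1$, $j=k$, or $j\ge k+1$.

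In the two cases where both $\mathscr{D}_j$ and $\mathscr{D}_{j+1}$ are first order (that is, $j\le k-2$ and $j\ge k+1$), the composition produces, up to a nonzero combinatorial constant, a factor $Z_{[A}^{A'}Z_{B}^{B'}$ in which $A,B$ lie among the antisymmetrized unprimed indices, while the primed indices $A',B'$ are either both contracted against the symmetric primed slots of $f$ (for $j\le k-2$) or are explicitly symmetrized by the formula (\ref{eq:D-j-3}) (for $j\ge k+1$). In either situation I restrict the full antisymmetrization to the pair $(A,B)$ and simultaneously symmetrize $A',B'$; the operator factor then becomes $Z_{[A}^{(A'}Z_{B]}^{B')}$, which vanishes by Lemma \ref{ll}. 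Hence these compositions are zero.

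The remaining two cases, $j=k-1$ and $j=k$, feed respectively into and out of the second-order operator $\mathscr{D}_k$, and constitute the crux. After expansion and the use of (\ref{bra}), each is governed by a product of three vector fields antisymmetrized over three unprimed indices, with two of the primed types ($0'$ and $1'$) coming from $\mathscr{D}_k$ and the third coming from the neighbouring first-order operator. Thus it suffices to prove the single identity $Z_{[A}^{\alpha}Z_{B}^{\beta}Z_{C]}^{\gamma}=0$ for all $A,B,C$ and all $\alpha,\beta,\gamma\in\{0',1'\}$. Since there are three slots but only two primed types, at least two of $\alpha,\beta,\gamma$ agree. If all three agree, the three fields lie in one column of (\ref{ZAA}) and commute by Lemma \ref{l3.1}(1), so the antisymmetrization of a symmetric product vanishes. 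If exactly two agree, I expand the six permutations and use (\ref{3.4}) to bring the two like-type fields together; the normal-ordered parts cancel among the permutations, leaving only commutator corrections of the form $C_{AB}:=[Z_A^{0'},Z_B^{1'}]$ times a single field, and these cancel in pairs precisely because $C_{AB}=C_{BA}$. This symmetry of $C$ is exactly the content of Corollary \ref{cc} (equivalently, of the explicit brackets in Lemma \ref{l3.1}(2)), since $C_{AB}=[Z_A^{0'},Z_B^{1'}]=-[Z_A^{1'},Z_B^{0'}]=[Z_B^{0'},Z_A^{1'}]=C_{BA}$.

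The main obstacle is this last cancellation: unlike the first-order steps, the second-order step does not follow from Lemma \ref{ll} alone but genuinely requires that $[Z_A^{0'},Z_B^{1'}]$ be symmetric under $A\leftrightarrow B$. This is the single point at which the special commutator structure of the right quaternionic Heisenberg group intervenes, and it is exactly the symmetry that fails on the left group; this explains why (\ref{cf}) is a complex here with the elementary bundles $\wedge^j\mathbb{C}^{2n}$ rather than with the $\mathfrak{sp}$-fundamental representations. Finally, the degenerate small-$k$ situations (in particular $k=0$, where $\mathscr{D}_0=\mathscr{D}_k$ is itself second order, and $k=1$) involve only a subset of the four cases above and are settled by the same two mechanisms.
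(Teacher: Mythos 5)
Your proposal is correct and follows essentially the same route as the paper: the same case decomposition, with Lemma \ref{ll} disposing of the two first-order compositions and the column commutativity (\ref{3.4}) together with Corollary \ref{cc} handling the compositions through $\mathscr{D}_k$. Your packaging of the latter as the single identity $Z_{[A}^{\alpha}Z_{B}^{\beta}Z_{C]}^{\gamma}=0$, proved by expanding the six permutations and invoking the symmetry $[Z_A^{0'},Z_B^{1'}]=[Z_B^{0'},Z_A^{1'}]$, is a uniform restatement of the paper's two short bracket manipulations rather than a genuinely different argument.
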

\begin{proof}

For $A,B=0,\cdots, 2n-1$ and $A_3',\cdots, A_k'=0',1',$ we have
\begin{equation*}
\begin{aligned}
(\mathscr{D}_1\circ\mathscr{D}_0f)_{ABA_3'\cdots A_k'}=&2\sum_{A'=0',1'}
Z_{[A}^{A'}(\mathscr{D}_0f)_{B]A'A_3'\cdots A_k'}
=2\sum_{A',C'=0',1'}
Z_{[A}^{A'}Z_{B]}^{C'}f_{C'A'A_3'\cdots A_k'}\\
=&2\sum_{A',C'=0',1'}
Z_{[A}^{(A'}Z_{B]}^{C')}f_{C'A'A_3'\cdots A_k'}=0,
\end{aligned}\end{equation*}
by Lemma \ref{ll} and $f_{C'A'A_3'\cdots A_k'}=f_{A'C'A_3'\cdots A_k'}$.
For general $j=1,\cdots,k-2,$ we have
\begin{equation*}
\begin{aligned}
(\mathscr{D}_{j+1}\circ\mathscr{D}_jf)_{A_1\cdots A_{j+2}A_1'\cdots A_{k-j-2}'}=&(j+2)(j+1)\sum_{A',C'=0',1'}
Z_{[A_1}^{A'}Z_{[A_2}^{C'}f_{A_3\cdots A_{j+2}]]C'A'A_1'\cdots A_{k-j-2}'} \\
=& (j+2)(j+1) \sum_{A',C'=0',1'}
Z_{[[A_1}^{(A'}Z_{A_2]}^{C')}f_{A_3\cdots A_{j+2}]C'A'A_1'\cdots A_{k-j-2}'}=0,
\end{aligned}\end{equation*}
by using (\ref{bra})  repeatedly, Lemma \ref{ll} and $ f$ symmetric in the primed indices again.

For $j=k-1,$ we have
\begin{equation*}
\begin{aligned}
(\mathscr{D}_{k}\circ\mathscr{D}_{k-1}f)_{A_1\cdots A_{k+2}}=&(k+2)k\sum_{A'=0',1'}
Z_{[A_1}^{0'}Z_{A_2}^{1'}Z_{[A_3}^{A'}f_{A_4\cdots A_{{k+2}}]]A'}=0.
\end{aligned}\end{equation*}
This is because if $A'=1',$    $Z_{[A_1}^{0'}Z_{[A_2}^{1'}Z_{A_3]}^{1'}f_{A_4\cdots A_{{(k+2)}}]1'}=0$ by using
  (\ref{1}), and  if $A'=0',$
  \begin{equation*}
     Z_{[A_1}^{0'}Z_{A_2}^{1'}Z_{A_3]}^{0'}=Z_{[A_1}^{0'}Z_{[A_2}^{1'}Z_{A_3]]}^{0'}=
-Z_{[A_1}^{0'}Z_{[A_2}^{0'}Z_{A_3]]}^{1'}=
-Z_{[[A_1}^{0'}Z_{A_2]}^{0'}Z_{A_3]}^{1'}=0,
  \end{equation*}
  by using
  (\ref{bra}) repeatedly and Corollary \ref{cc}.

For  $j=k,$ we have
\begin{align}
\left(\mathscr{D}_{k+1}\circ\mathscr{D}_kf\right)_{A_1\cdots A_{k+3}}^{A'}=(k+3)(k+2)Z_{[A_1}^{A'}Z_{[A_2}^{0'}Z_{A_3}^{1'}f_{A_4\cdots A_{{k+3}}]]}=0.
\end{align}
This is because if $A'=0',$    $Z_{[[A_1}^{0'}Z_{A_2]}^{0'}Z_{A_3}^{1'}f_{A_4\cdots A_{{k+3}}]}=0$ by using   (\ref{1}), and if $A'=1',$
\begin{equation*}
   Z_{[A_1}^{1'}Z_{A_2}^{0'}Z_{A_3]}^{1'}=Z_{[A_1}^{1'}Z_{[A_2}^{0'}Z_{A_3]]}^{1'}=
-Z_{[A_1}^{1'}Z_{[A_2}^{1'}Z_{A_3]]}^{0'}=
-Z_{[[A_1}^{1'}Z_{A_2]}^{1'}Z_{A_3]}^{0'}=0,
\end{equation*}
  by using
  (\ref{bra}) repeatedly and Corollary \ref{cc}.

For $j=k+1,\cdots,2n-2,$ we have
$$\left(\mathscr{D}_{j+1}\circ\mathscr{D}_jf\right)_{A_1\cdots A_{j+3}}^{A_1'\cdots A'_{j-k+1}}=(j+3)(j+2)Z^{((A_1'}_{[[A_1}Z_{A_2]}^{A_2')}f^{A_3'\cdots A'_{{j-k+1}})}_{A_3\cdots A_{{j+3}}]}=0,$$ by Lemma \ref{ll}.  The theorem is proved.
\end{proof}

\subsection{Comparison with the left case}\label{sub25}
Recall that a transformation $T$ on $\mathscr{H}$ is called {\it conformal} if $\|T_*W_1\|=\|T_*W_2\|$  for any two horizontal vector fields $W_1$ and $W_2$ with  $\|W_1\|=\|W_2\|,$ where $\|W\|^2:= \sum_{j=1}^{4n}a_j^2 $ if we write $W=\sum_{j=1}^{4n}a_jY_j.$ It is known that the group  of conformal transformations on $\mathscr{H}$ is ${\rm Sp}(n+1,1)$ (cf. e.g. \cite{Ivanov}) generated by the following transformations:\\
(1) \emph{dilations}:
\begin{align}
D_{\delta}:(y,s)\longrightarrow(\delta y,\delta^{2}s),\ \delta>0;
\end{align}
(2) \emph{left translations}:
\begin{align}\label{2.25}
\tau_{(x,{t})}:(y,{s})\longrightarrow
(x,{t})\cdot(y,{s});
\end{align}
(3) \emph{rotations}:
\begin{align}\label{33}
R_\mathbf a:(y,s)\longrightarrow (y\mathbf a,s),\ {\rm for} \ \mathbf a\in {\rm Sp}(n),
\end{align}
where
\begin{equation*}
  {\rm Sp}(n)=\{\mathbf a\in {\rm GL}(n,\mathbb{H})|{\mathbf a\bar{\mathbf a}^{t}}=I_{n}\};
\end{equation*}
(4) The \emph{inversion}:
\begin{align}\label{44}
R:(y,s)\longrightarrow \left(-(|y|^{2}-s)^{-1}y,
\frac{-s}{|y|^{4}+|s|^{2}}\right);
\end{align}\\
(5) ${\rm Sp}(1)$ acts on $\mathscr{H}$ as
\begin{align}\label{66}
\sigma:(y,s)\longrightarrow (\sigma y,\sigma{s}\sigma^{-1}),
\end{align}
where the action on the first factor is left multiplication by $\sigma\in\mathbb{H}$ with $|\sigma|=1,$ while the action on the second factor  is isomorphism with ${\rm SO}(3)$.

The multiplication (\ref{hei'}) of the left quaternionic Heisenberg group $\widetilde{\mathscr H}$ can be written as  \begin{align}\label{mul'}
(x,t)\cdot({y},{s})= {\left(x+y,t_{\beta}+s_{\beta}+2
\sum_{l=0}^{n-1}\sum_{j,k=1}^{4}I_{kj}^{\beta}x_{4l+k}y_{4l+j}\right)},
\end{align}
for $x,y\in \mathbb{R}^{4n},\ t,s\in \mathbb{R}^{3},\ \beta=1,2,3,$ where $I_{kj}^{\beta}$ is the $(k,j)$-th entry of the following matrices
\begin{equation}\label{I}
\begin{aligned}
I^{1}:=\left(\begin{array}{cccc} 0 & 1 & 0 &0\\ -1& 0& 0& 0\\ 0& 0&0& -1\\0 &0& 1 &0\end{array}\right),    I^{2}:=\left(\begin{array}{cccc} 0 & 0 &
1 &0\\ 0& 0& 0& 1\\ -1& 0&0& 0\\0 &-1& 0 &0\end{array}\right),
I^{3}:=\left(\begin{array}{cccc} 0 & 0 & 0 &1\\ 0& 0& -1& 0\\ 0& 1&0& 0\\-1 &0& 0 &0\end{array}\right),
\end{aligned}
\end{equation}
satisfying the commutating relation of quaternions.

Recall that  $\mathfrak{g}=\mathfrak{sp}(2(n+2),\mathbb{C})$  has the decomposition $\mathfrak{g}=\mathfrak{g}_{-2}\oplus\mathfrak{g}_{-1}\oplus\mathfrak{g}_{0}
\oplus\mathfrak{g}_{1}\oplus\mathfrak{g}_{2},$ where $\mathfrak{g}_{-2}$ is an complex  abelian subalgebra generated by $T_1,T_2,T_3,$ and  $\mathfrak{g}_{-1}$ is generated by $\{Y_{AA'}\},A=0,1,\cdots,2n-1,A'=0',1'$ with
\begin{equation}\begin{aligned}
\ [Y_{A0'},Y_{(n+A)0'}]=&4T_2,\\ \ [Y_{A1'},Y_{(n+A)1'}]=&4T_3,\\ \ [Y_{A0'},Y_{(n+A)1'}]=&[Y_{A1'},Y_{(n+A)0'}]=4T_1,
\end{aligned}\end{equation}and any other bracket  vanishes
(cf. \cite[(2.10)]{wang1}). $\mathfrak p:=\mathfrak{g}_{0}
\oplus\mathfrak{g}_{1}\oplus\mathfrak{g}_{2}$ is a parabolic subgroup. $\mathfrak{u}_-:=\mathfrak{g}_{-2}\oplus\mathfrak{g}_{-1}.$ Let ${\rm U}_-$ be the complex Lie group with Lie algebra $\mathfrak{u}_-$. Then
\begin{align}\mathfrak{g}=\mathfrak{u}_-\oplus\mathfrak p.\end{align}
There exist exact sequences \cite[Theorem 3.2.1]{wang1} on ${\rm U}_-$ \begin{equation}\begin{aligned}\label{cf'}
0&\rightarrow \mathcal{R}\left({\rm U}_-,\odot^{k}\mathbb{C}^2\right)
\xrightarrow{Q_{0}^{(k)}} \mathcal{R}\left({\rm U}_-,\odot^{k-1}\mathbb{C}^2\otimes{V}^{(1)}\right)
\xrightarrow{Q_{1}^{(k)}}\cdots\rightarrow \mathcal{R}\left({\rm U}_-,{V}^{(k)}\right)\\ &\xrightarrow{Q_{k}^{(k)}} \mathcal{R}\left({\rm U}_-,{V}^{(k+2)}\right)\xrightarrow{Q_{k+1}^{(k)}}\cdots
\xrightarrow{Q_{2n-1}^{(k)}}\mathcal{R}\left({\rm U}_-,\odot^{2n-k}
\mathbb{C}^2\right)\rightarrow0,
\end{aligned}\end{equation}
for $0\leq k\leq n-2,$ where $Q_{j}^{(k)}$ is defined in terms of $Y_{AA'},T_\beta$ (cf. \cite[Thoerem 1.0.1]{wang1}). Here  $V^{(j)}$ is the irreducible representation of $\mathfrak{sp}(2n,\mathbb{C})$ with the highest weight to be the $j$-th fundamental weight $\omega_j$ and $\mathcal{R}( {\rm U}_-,V)$ is the ring of $V$-valued polynomials over ${\rm U}_-.$
These complexes are constructed by twistor method, and  operators $Q_{j}^{(k)}$'s are invariant under $\mathfrak{sp}(2(n+2),\mathbb{C}).$ Note that
\begin{align}\label{2.43'}
\widetilde{X}_{4l+j}=\frac{\partial}{\partial x_{4l+j}}+2\sum_{\beta=1}^{3}\sum_{k=1}^{4}I^{\beta}_{kj}x_{4l+k}
\frac{\partial}{\partial t_{\beta}}
\end{align}
is standard left invariant vector field  on $\widetilde{\mathscr{H}}.$
Define \begin{align*}
\left(\widetilde{Z}_{AA'}\right):=\left(\begin{array}{ll} \widetilde{X}_{1}+\mathbf{i}\widetilde{X}_{2}& -\widetilde{X}_{3}-\mathbf{i}\widetilde{X}_{4}\\ \widetilde{X}_{3}-\mathbf{i}\widetilde{X}_{4}&\ \  \widetilde{X}_{1}-\mathbf{i}\widetilde{X}_{2}\\\ \ \ \ \ \vdots&\ \ \ \ \ \ \ \vdots\\ \widetilde{X}_{4l+1}+\mathbf{i}\widetilde{X}_{4l+2}& -\widetilde{X}_{4l+3}-\mathbf{i}\widetilde{X}_{4l+4}\\\widetilde{X}_{4l+3}-\mathbf{i}\widetilde{X}_{4l+4}& \ \ \widetilde{X}_{4l+1}-\mathbf{i}\widetilde{X}_{4l+2}\\ \ \ \ \ \ \vdots&\ \ \ \ \ \ \ \vdots\end{array}\right),
\end{align*}
where $A=0,1,\cdots,2n-1,$ $A'=0',1'.$ They satisfy  the following commutating relations:
\begin{equation}\begin{aligned}\label{3.4'}
&\left[\widetilde{Z}_{(2l)0'},\widetilde{Z}_{(2l+1)0'}\right]
=8\left(\partial_{t_2}-\mathbf{i}\partial_{t_3}\right),\\
&\left[\widetilde{Z}_{(2l)1'},\widetilde{Z}_{(2l+1)1'}\right]
=8\left(\partial_{t_2}+\mathbf{i}\partial_{t_3}\right),\\
&\left[\widetilde{Z}_{(2l)0'},\widetilde{Z}_{(2l+1)1'}\right]
=\left[\widetilde{Z}_{(2l)1'},\widetilde{Z}_{(2l+1)0'}\right]=-8\mathbf{i}\partial_{t_1},
\end{aligned}\end{equation}
$l=0,\cdots,n-1,$ and any other bracket  vanishes.
So by embedding the real Lie algebra of $\widetilde{\mathscr H}$ into the complex Lie algebra $\mathfrak{u}_-$ by $\widetilde Z_{AA'}\mapsto Y_{AA'},$ we get tangential $k$-Cauchy-Fueter  complexes on $\widetilde{\mathscr H}$ (cf. \cite[Thoerem 1.0.1]{wang1}), on which $G={\rm Sp}(2(n+2),\mathbb{C})$ acts naturally.

Now consider complexes on the right quaternionic Heisenberg group. We can show the following proposition as \cite[Proposition 3.1]{wang24'}.
\begin{prop}\label{pp2.2}
Under the transformation $M_{\mathbf{a}}:\mathbb H^n\rightarrow\mathbb H^n,$ $q\mapsto q'=q\mathbf a$ with  $\mathbf{a}=(a_{jk})\in GL(n,\mathbb{H}),$ where $q=(q_1,q_2,\cdots,q_{n})$ with $q_{l}={x_{4l-3}}+\mathbf{i}{x_{4l-2}}
+\mathbf j{x_{4l-1}}+\mathbf k{x_{4l}},$ we have
\begin{align}\label{par}
\overline{\partial}_{q_l}\left[f(q\mathbf a)\right]=\sum_{m=1}^n\left[\overline{\partial}_{q'_m} (\bar{\mathbf{a}}_{l m} f) \right](q\mathbf a),
\end{align}
where $\overline{\partial}_{q_{l}}=\partial_{x_{4l-3}}+\mathbf{i}\partial_{x_{4l-2}}
+\mathbf{j}\partial_{x_{4l-1}}+\mathbf{k}\partial_{x_{4l}}.$
\end{prop}
\begin{proof}
Denote $\widehat q=(x_1,\cdots,x_{4n}) $. Since $M_{\mathbf{a}}$ define a real linear transformation on the underlying vector space $\mathbb{R}^{4n}$, we have $\widehat{q\mathbf a}=\widehat q\mathbf a^\mathbb{R} $
for some $(4n)\times(4n)$ real matrix $\mathbf a^\mathbb{R}$ associated to $\mathbf a.$ As the $b$-th element of $\widehat{q\mathbf a}$ is $\sum_{a=1}^{4n}x_a\mathbf a^\mathbb{R}_{a b},$ we have
\begin{align*}
\frac{\partial}{\partial x_a}\left[f(q\mathbf a)\right]=\sum_{b=1}^{4n}\frac{\partial f}{\partial x_b}(q\mathbf a) \mathbf a^\mathbb{R}_{a b}.
\end{align*}Note that we can write $q =\sum_{l=1}^{4n}\sum_{j=1}^4\mathbf i_{j-1}  x_{4l+j} $.
Therefore,
\begin{equation*}\begin{aligned}
M_{\mathbf{a}*}\overline{\partial}_{q_{l+1}}=&\sum_{j=1}^4\mathbf i_{j-1}M_{\mathbf{a}*}\frac{\partial}{\partial x_{4l +j}}=\sum_{b=1}^{4n}\sum_{j=1}^4\mathbf i_{j-1}\frac{\partial}{\partial x_{b}}\mathbf a^\mathbb{R}_{(4l +j)b}\\=&\sum_{b=1}^{4n}\sum_{j=1}^4\mathbf i_{j-1}\frac{\partial}{\partial x_{b}}\left( {\mathbf a}^\mathbb{R}\right)^t_{b(4l +j)}=\sum_{m=1}^{ n}
\overline{\partial}_{q'_m}\cdot\bar{\mathbf{a}}_{l m},
\end{aligned}\end{equation*}by  $\left( {\mathbf a}^\mathbb{R}\right)^t=\left( \overline{{\mathbf a}}^t\right)^\mathbb{R}$.
The proposition is proved.
\end{proof}
\begin{cor}
${R_\mathbf{a}}_*\left(\overline{Q}_1,\cdots,\overline{Q}_n\right)
=\left(\overline{Q}_1,\cdots,\overline{Q}_n\right)\bar{\mathbf a}^t,$ for $\mathbf a\in{\rm Sp}(n),$ where  $\overline{Q}_{l+1}=X_{4l+1}+\mathbf iX_{4l+2}+\mathbf jX_{4l+3}+\mathbf kX_{4l+4}.$
\end{cor}
Since  $\overline{Q}_l=\overline{\partial}_{q_l}$ at the origin of $\mathscr H,$ the above identity holds at the origin by Proposition \ref{pp2.2}. It holds at other place by the left invariance.
By applying the  representation $\tau$ in (\ref{tau}),
i.e. $\tau (q_1q_2)=\tau (q_1)\tau (q_2)$
for any  $q_1,q_2\in\mathbb{H} $ {(cf.  \cite[Proposition 2.1]{Wan})},  we get
{\small\begin{equation}\begin{aligned}\label{pre}
&{R_\mathbf{a}}_*\left(\begin{array}{llllll}Z_{00'}Z_{01'}&\cdots& Z_{(2l)0'}&Z_{(2l)1'}&\cdots\\Z_{10'}&Z_{11'}&\cdots & Z_{(2l+1)0'}&Z_{(2l+1)1'}&\cdots\end{array}\right)\\=&\left(\begin{array}{llllll}Z_{00'}&Z_{01'}&\cdots & Z_{(2l)0'}&Z_{(2l)1'}&\cdots\\Z_{10'}&Z_{11'}&\cdots & Z_{(2l+1)0'}&Z_{(2l+1)1'}&\cdots\end{array}\right)\tau(\bar{\mathbf a}^t),
\end{aligned}\end{equation}}for  rotation ${R_\mathbf{a}}$ in (\ref{33}) with $\mathbf a\in{\rm Sp}(n),$
where $\tau(\bar{\mathbf a}^t)$ is a $(2n)\times(2n)$ complex matrix with $\overline{\mathbf{a}}_{jk}$ replaced by the $2\times2$ matrix $\tau(\overline{\mathbf{a}}_{jk}).$
(\ref{pre}) implies that the abelian subalgebras of each column in (\ref{nabla}) is not invariant under the rotations (\ref{33}) of ${\rm Sp}(n).$ Note that commutativity (\ref{3.4}) of each column   plays an very  important role in the construction of our complexes (\ref{cf}).  So by definition (\ref{eq:D-j-1}) (\ref{eq:D-j-2}) and (\ref{eq:D-j-3}), the differential operators $\mathscr D_j$'s in the complex (\ref{cf}) in terms of $Z_{A}^{A'}$'s are not invariant under ${\rm Sp}(n).$ Therefore they are not invariant under ${\rm Sp}(2(n+2),\mathbb{C}).$

Another difference is that the kernel of the tangential $k$-Cauchy-Fueter in space of $L^2$ integrable  function on the left quaternionic Heisenberg group is infinite dimensional
\cite{Shi2}, while it is trivial on the right quaternionic Heisenberg group, since such a function satisfies $\Delta_b f=0$ by \cite[Proposition 2.4]{Shi2} and $\ker  \Delta_b=\{0\}$ in the $L^2$ space.

On the other hand if we define the complex horizontal fields on $\mathscr H$
\begin{equation}\begin{aligned}
\left(\widehat{Z}_{AA'}\right):=\left(\begin{array}{ll} -Y_{1}+\mathbf{i}Y_{2}& -Y_{3}-\mathbf{i}Y_{4}\\\ \ Y_{3}-\mathbf{i}Y_{4}&  -Y_{1}-\mathbf{i}Y_{2}\\\ \ \ \ \ \ \vdots&\ \ \ \ \ \ \ \vdots\\ -Y_{4l+1}+\mathbf{i}Y_{4l+2}& -Y_{4l+3}-\mathbf{i}Y_{4l+4}\\ \ \ Y_{4l+3}-\mathbf{i}Y_{4l+4}& -Y_{4l+1}-\mathbf{i}Y_{4l+2}\\ \ \ \ \ \ \ \vdots&\ \ \ \ \ \ \ \vdots\end{array}\right)
\end{aligned}\end{equation}
with $Y_{4l+1}$   replaced by $-Y_{4l+1},$ then $\widehat{Z}_{AA'}$'s satisfy
\begin{equation*}\begin{aligned}
&\left[\widehat{Z}_{(2l)0'},\widehat{Z}_{(2l+1)0'}\right]
=8\left(\partial_{t_2}-\mathbf{i}\partial_{t_3}\right),\\
&\left[\widehat{Z}_{(2l)1'},\widehat{Z}_{(2l+1)1'}\right]
=8\left(\partial_{t_2}+\mathbf{i}\partial_{t_3}\right),\\
&\left[\widehat{Z}_{(2l)0'},\widehat{Z}_{(2l+1)1'}\right]
=\left[\widehat{Z}_{(2l)1'},\widehat{Z}_{(2l+1)0'}\right]=-8\mathbf{i}\partial_{t_1},
\end{aligned}\end{equation*}
$l=0,\cdots,n-1,$ and any other bracket  vanishes,
i.e. we can embed the real Lie algebra of $\mathscr H$ into the complex Lie algebra ${\rm U}_-.$ Then the complexes (\ref{cf'}) on ${\rm U}_-$ induces a family of  complexes on $\mathscr{H}$ invariant under ${\rm Sp}((2n+2),\mathbb{C}).$ But the first operator is different from the first one in (\ref{cf}). Moreover the $(n-1)$-th operator in the complex induced from ${\rm U}_-$ is a linear combination of $T_\beta$'s (cf. \cite[Proposition 4.3.3]{wang1}),  while the $(n-1)$-th operator in (\ref{cf}) involves only $Z_{AA'}$'s. They are different complexes.
Changing $Y_{4l+1}$ to $-Y_{4l+1},$ corresponds to changing  the sign before $x_{4l+1}^2$ in the definition (\ref{hyper}) of the hypersurface $\mathcal{S}$. The resulting hypersurface is essentially the boundary of the quaternionic Siegel domain.

On other quadratic hypersurface, there is no reason to expect that the restriction of the $k$-Cauchy-Fueter operators and complexes are   invariant in general under the action of ${\rm Sp}(n).$

\subsection{The adjoint operator}
On a domain $\Omega\subset\mathscr{H},$ denote the inner product
\begin{align*}
(u,v):=\int_{\Omega}u\cdot \overline{v}\hbox{d}V,
\end{align*}
for $u,v\in L^2(\Omega,\mathbb{C}),$ where $\hbox{d}V$ is the  Lebesgue measure on $\mathscr{H}.$ The inner product of $L^2(\Omega,\mathscr{V}_1)$ is defined as
$$\langle f,h\rangle:=\sum_{A=0}^{2n-1}
\sum_{A_2',\cdots,A_k'=0',1'}\left(f_{AA_2'\cdots A_k'},h_{AA_2'\cdots A_k'}\right)$$
for $f,h\in L^2(\Omega,\mathscr{V}_1),$ and $\|f\|:=\langle f,f\rangle^{\frac{1}{2}}.$ We define inner products of   $L^2(\Omega,\mathscr{V}_0)$ and $L^2(\Omega,\mathscr{V}_2)$  similarly. Define the $L^2$-norm on $\mathscr{H}/\mathscr{H}_{\mathbb{Z}}$ by
\begin{align*}
\|f\|^2_{L^2(\mathscr{H}/\mathscr{H}_{\mathbb{Z}})}=
\|f\|^2_{L^2(\mathscr{F})}=\int_{\mathscr{F}}|f|^2\hbox{d}V.
\end{align*}
\begin{prop}
The formal adjoint operator of $Z_A^{A'}$ is
\begin{align}\label{delta}
\left(Z_A^{A'}\right)^*=\delta_{A'}^A,\quad {\rm where}\quad \delta^A_{A'}:=-\overline{Z_A^{A'}}.
\end{align}
\end{prop}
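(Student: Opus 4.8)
The plan is to reduce the claim to the formal skew-adjointness of the real vector fields $Y_a$ and then keep track of the complex coefficients appearing in $Z_A^{A'}$. Since this is the \emph{formal} adjoint, I work with $u,v\in C_c^\infty(\Omega)$ (or, equivalently, on the boundaryless compact quotient $\mathscr{H}/\mathscr{H}_{\mathbb{Z}}$), so that all boundary terms in integration by parts may be discarded.

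The first step is to show that each $Y_a$ is divergence-free with respect to the Lebesgue measure $\hbox{d}V$, hence $Y_a^*=-Y_a$. Writing $Y_{4l+j}$ as in (\ref{2.43}), its divergence is
\begin{align*}
\partial_{y_{4l+j}}(1)+\sum_{\beta=1}^3\partial_{s_\beta}\Bigl(2\sum_{k=1}^4 B_{kj}^\beta y_{4l+k}\Bigr)=0,
\end{align*}
where the first term vanishes because the coefficient of $\partial_{y_{4l+j}}$ is the constant $1$, and the second vanishes because the coefficient of $\partial_{s_\beta}$ depends only on the $y$-variables $y_{4l+k}$ and not on $s_\beta$. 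This structural feature of (\ref{2.43}) is the crux of the computation; equivalently, $\hbox{d}V$ is the bi-invariant Haar measure on the unimodular group $\mathscr{H}$, so every left-invariant field is divergence-free. Integration by parts on compactly supported functions then gives $\int (Y_a u)\overline{v}\,\hbox{d}V=-\int u\,(Y_a\overline{v})\,\hbox{d}V$, and since $Y_a$ has real coefficients we have $Y_a\overline{v}=\overline{Y_a v}$, whence $(Y_a u,v)=-(u,Y_a v)$.

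Finally I would pass from $Y_a$ to $Z_A^{A'}$. By (\ref{ZAA}) each $Z_A^{A'}$ is a complex linear combination $\sum_a c_a Y_a$ with constant coefficients $c_a\in\mathbb{C}$. Using the skew-adjointness just established together with $Y_a\overline{v}=\overline{Y_a v}$,
\begin{align*}
(Z_A^{A'}u,v)=\sum_a c_a\int (Y_a u)\overline{v}\,\hbox{d}V=-\sum_a c_a\int u\,\overline{Y_a v}\,\hbox{d}V=-\int u\,\overline{\Bigl(\sum_a \overline{c_a}\, Y_a v\Bigr)}\,\hbox{d}V=-(u,\overline{Z_A^{A'}}v),
\end{align*}
since the conjugate field is $\overline{Z_A^{A'}}=\sum_a\overline{c_a}\,Y_a$ (the $Y_a$ being real). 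Hence $(Z_A^{A'})^*=-\overline{Z_A^{A'}}=\delta_{A'}^A$, as claimed. The argument is essentially a careful bookkeeping of the complex conjugation; the only genuine input is the divergence-free property of the $Y_a$, which I expect to be the one step worth checking carefully.
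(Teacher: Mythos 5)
Your proof is correct and follows essentially the same route as the paper: establish $(Y_au,v)=-(u,Y_av)$ by integration by parts (you make explicit the divergence-free property of the $Y_a$ with respect to Lebesgue measure, which the paper leaves implicit), and then track the complex conjugation through the constant-coefficient combinations $Z_A^{A'}=\sum_a c_aY_a$. No issues.
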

\begin{proof}
For $u,v\in C_0^\infty(\mathscr{H},\mathbb{C}),$ we have
$$(Y_au,v)=(u,-Y_av)$$
by integration by part. So
$((Y_a\pm\mathbf{i}Y_b)u,v)=(u,-(Y_a\mp\mathbf{i}Y_b)v).$
Then (\ref{delta}) holds since $Z_A^{A'}$ has the form $Y_a\pm\mathbf{i}Y_b$ for some $a$ and $b$ by (\ref{ZAA}).
Thus we have
\begin{align}\label{bypart}
\left(Z_A^{A'}u,v\right)=\left(u,\delta^A_{A'}v\right)
\end{align}
over $\mathscr{H}.$ For (\ref{bypart}) over $\mathscr{H}/\mathscr{H}_{\mathbb{Z}},$ by using the unit partition, it is sufficient to show it   for $v\in C_0^\infty(\mathscr{H},\mathbb{C}).$ This case follows from the  result over $\mathscr{H}.$
\end{proof}

\begin{lem}
For $f\in C_0^1(\mathscr{H},\mathscr{V}_1)$ or $C^1(\mathscr{H}/\mathscr{H}_{\mathbb{Z}},\mathscr{V}_1),$ we have
\begin{align}\label{dstar}
\left(\mathscr{D}_0^*f\right)_{A_1'\cdots A_k'}=\sum_{A=0}^{2n-1}\delta^A_{(A_1'}f_{A_2'\cdots A_k')A}.
\end{align}
\end{lem}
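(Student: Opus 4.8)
The plan is to compute the adjoint directly from its defining relation $\langle\mathscr{D}_0 u,f\rangle=(u,\mathscr{D}_0^*f)$, which holds for all test sections $u\in C_0^\infty(\mathscr{H},\mathscr{V}_0)$ (respectively $u\in C^\infty(\mathscr{H}/\mathscr{H}_{\mathbb{Z}},\mathscr{V}_0)$), and then read off $\mathscr{D}_0^*f$. First I would expand the left-hand side using the definition (\ref{zy}) of $\mathscr{D}_0$ together with the inner product on $L^2(\Omega,\mathscr{V}_1)$, summing over the free unprimed index $A$ and the symmetric primed indices $A_2',\ldots,A_k'$:
\[
\langle\mathscr{D}_0 u,f\rangle=\sum_{A=0}^{2n-1}\sum_{A_1',\ldots,A_k'}\left(Z_A^{A_1'}u_{A_1'\cdots A_k'},\,f_{A A_2'\cdots A_k'}\right).
\]

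Next I would transfer the vector field from $u$ onto $f$ by the integration-by-parts relation (\ref{bypart}), namely $(Z_A^{A'}u,v)=(u,\delta^A_{A'}v)$ with $\delta^A_{A'}=-\overline{Z_A^{A'}}$ from (\ref{delta}). This relation was established with no boundary terms both for compactly supported sections on $\mathscr{H}$ and on the closed manifold $\mathscr{H}/\mathscr{H}_{\mathbb{Z}}$, so it applies in either setting of the lemma. Using in addition the index convention $f_{A A_2'\cdots A_k'}=f_{A_2'\cdots A_k' A}$, I obtain
\[
\langle\mathscr{D}_0 u,f\rangle=\sum_{A_1',\ldots,A_k'}\left(u_{A_1'\cdots A_k'},\,\sum_{A=0}^{2n-1}\delta^A_{A_1'}f_{A_2'\cdots A_k' A}\right).
\]

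The final step, which is the only non-routine one, is the symmetrisation over the primed indices. The coefficient $\sum_A\delta^A_{A_1'}f_{A_2'\cdots A_k'A}$ is not manifestly invariant under permutations of $A_1',\ldots,A_k'$, whereas $\mathscr{D}_0^*f$ must lie in $\mathscr{V}_0=\odot^k\mathbb{C}^2$ and is determined only through its pairing against the fully symmetric tensors $u_{A_1'\cdots A_k'}$. Here I would argue that, since $u$ is invariant under any permutation $\sigma\in S_k$ of its indices, relabelling the summation variables by $\sigma$ shows that $u$ pairs only with the symmetrised coefficient; averaging over $S_k$ as in (\ref{sym}) then produces exactly $(\mathscr{D}_0^*f)_{A_1'\cdots A_k'}=\sum_A\delta^A_{(A_1'}f_{A_2'\cdots A_k')A}$, which is the asserted formula (\ref{dstar}). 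I expect this symmetrisation to be the main point requiring care, as it is forced by the requirement that the adjoint land in the symmetric-power bundle $\mathscr{V}_0$; the remaining manipulations are routine bookkeeping with the index conventions and the adjoint relation for the single operators $Z_A^{A'}$.
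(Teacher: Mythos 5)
Your proposal is correct and follows essentially the same route as the paper's proof: expand $\langle\mathscr{D}_0 u,f\rangle$ componentwise, transfer each $Z_A^{A_1'}$ onto $f$ via the integration-by-parts relation (\ref{bypart}), and then symmetrise the primed indices using the fact that a fully symmetric tensor pairs only with the symmetric part of its partner. The paper likewise singles out the final symmetrisation as the one point requiring comment, for exactly the reason you give.
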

\begin{proof}
The proof is  similar to that for the $k$-Cauchy-Fueter operator over $\mathbb{H}^n$ (cf.  \cite[Lemma 3.1]{wang23}).
For any $g\in C^1(\mathscr{H}/\mathscr{H}_{\mathbb{Z}},\mathscr{V}_0),$ we have
\begin{equation*}\begin{aligned}
\langle\mathscr{D}_0g,f\rangle=&\sum_{A,A_2',\cdots,A_k'}\left(\sum_{A_1'}
Z_A^{A_1'}g_{A_1'\cdots A_k'},f_{A_2'\cdots A_k'A}\right)=\sum_{A,A_1',\cdots,A_k'}\left(
g_{A_1'\cdots A_k'},\delta_{A_1'}^Af_{A_2'\cdots A_k'A}\right)\\=&\sum_{A_1',\cdots,A_k'}\left(
g_{A_1'\cdots A_k'},\sum_A\delta^A_{(A_1'}f_{A_2'\cdots A_k')A}\right)=\langle g,\mathscr{D}_0^*f\rangle
\end{aligned}\end{equation*}
by using (\ref{bypart}) and symmetrisation
$$\sum_{A_1',\cdots,A_k'}\left(
g_{A_1'\cdots A_k'},G_{A_1'\cdots A_k'}\right)=\sum_{A_1',\cdots,A_k'}\left(
g_{A_1'\cdots A_k'},G_{(A_1'\cdots A_k')}\right)$$
for any $g\in L^2(\mathscr{H},\odot^k\mathbb{C}^2),G\in L^2(\mathscr{H},\otimes^k\mathbb{C}^2).$ (cf.  \cite[(3.4)]{wang23}). Here we have to symmetrise the primed indices  in $\sum_A\delta^A_{A_1'}f_{A_2'\cdots A_k'A}$ since only after symmetrisation it becomes an element of $C_0^1(\mathscr{H},\mathscr{V}_0).$
\end{proof}

$\mathscr{D}_0^*\mathscr{D}_0$ is simple since it is diagonal by the following proposition.
\begin{prop}\label{DD} For $f\in C^2(\Omega,\mathscr{V}_0),$ we have
$$\mathscr{D}_0^*\mathscr{D}_0f=\Delta_bf.$$
\end{prop}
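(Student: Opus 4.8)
The plan is to compose the explicit formula (\ref{zy}) for $\mathscr{D}_0$ with the adjoint formula (\ref{dstar}), and to reduce the whole statement to the behaviour of the contracted second-order operator $\sum_{A=0}^{2n-1}\delta^A_{A'}Z_A^{B'}$. Writing $h=\mathscr{D}_0f$, so that by the index convention $h_{A_2'\cdots A_k'A}=h_{AA_2'\cdots A_k'}=\sum_{B'}Z_A^{B'}f_{B'A_2'\cdots A_k'}$, I introduce the unsymmetrised object
\begin{equation*}
G_{A_1'\cdots A_k'}:=\sum_{A=0}^{2n-1}\delta^A_{A_1'}h_{A_2'\cdots A_k'A}=\sum_{A=0}^{2n-1}\sum_{B'=0',1'}\delta^A_{A_1'}Z_A^{B'}f_{B'A_2'\cdots A_k'},
\end{equation*}
so that (\ref{dstar}) reads $(\mathscr{D}_0^*\mathscr{D}_0f)_{A_1'\cdots A_k'}=G_{(A_1'\cdots A_k')}$, the symmetrisation over the primed indices. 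Everything then comes down to evaluating $\sum_A\delta^A_{A_1'}Z_A^{B'}$ for each pair $A_1',B'\in\{0',1'\}$.

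The heart of the argument is the identity
\begin{equation*}
\sum_{A=0}^{2n-1}\delta^A_{A'}Z_A^{B'}=\delta^{B'}_{A'}\,\Delta_b,\qquad A',B'=0',1',
\end{equation*}
where $\delta^{B'}_{A'}$ on the right denotes the Kronecker symbol. Since $\delta^A_{A'}=-\overline{Z_A^{A'}}$ and the $Y_a$ are real, I would substitute the explicit entries of (\ref{ZAA}) and group the sum over $A$ into the pairs $\{2l,2l+1\}$, $l=0,\dots,n-1$. For $A'=B'$ each pair contributes $-(Y_{4l+1}^2+\cdots+Y_{4l+4}^2)$ together with two first-order terms $\mp 4\mathbf{i}\partial_{s_1}$ arising from the commutators $[Y_{4l+1},Y_{4l+2}]$ and $[Y_{4l+3},Y_{4l+4}]$ in (\ref{com}); these first-order terms cancel between $A=2l$ and $A=2l+1$, and summing over $l$ leaves exactly $-\sum_{a}Y_a^2=\Delta_b$ by (\ref{sub}). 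For $A'\neq B'$ the same pairing produces only commutator (first-order) terms, and the relations $[Y_{4l+1},Y_{4l+3}]=-[Y_{4l+2},Y_{4l+4}]$ and $[Y_{4l+1},Y_{4l+4}]=[Y_{4l+2},Y_{4l+3}]$ in (\ref{com}) force the $\partial_{s_2}$- and $\partial_{s_3}$-terms to cancel, giving $0$.

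With this identity in hand the rest is bookkeeping: the contracted $B'$-summation collapses via the Kronecker symbol, $\sum_{B'}\delta^{B'}_{A_1'}\Delta_b f_{B'A_2'\cdots A_k'}=\Delta_b f_{A_1'A_2'\cdots A_k'}$, so $G_{A_1'\cdots A_k'}=\Delta_bf_{A_1'\cdots A_k'}$. Since $f\in\odot^k\mathbb{C}^2$ is already symmetric in its primed indices and $\Delta_b$ is a scalar operator, $G$ is symmetric and the symmetrisation acts trivially, yielding $(\mathscr{D}_0^*\mathscr{D}_0f)_{A_1'\cdots A_k'}=\Delta_bf_{A_1'\cdots A_k'}$, i.e.\ $\mathscr{D}_0^*\mathscr{D}_0f=\Delta_bf$. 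I expect the only genuine obstacle to be the cancellation of the first-order $\partial_{s_\beta}$-terms in the key identity; this is precisely the nice commutator behaviour of the right quaternionic Heisenberg group recorded in Lemma \ref{l3.1} (equivalently Corollary \ref{cc}), and it is the reason $\mathscr{D}_0^*\mathscr{D}_0$ is diagonal here, in contrast to the left quaternionic case.
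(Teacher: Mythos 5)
Your proposal is correct and follows essentially the same route as the paper: the key identity $\sum_A\delta^A_{A'}Z_A^{B'}=\delta^{B'}_{A'}\Delta_b$ is exactly Lemma \ref{l3.2}, which the paper also proves by pairing $A=2l$ with $A=2l+1$ and invoking the commutator relations (\ref{com}). The only (cosmetic) difference is that you dispatch the symmetrisation by noting the unsymmetrised contraction is already symmetric, whereas the paper expands it term by term via (\ref{2.35}); both are immediate.
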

\begin{proof}
Recall that for a $\otimes^k\mathbb{C}^2$-valued function $F_{A_1'\cdots A_k'}$   symmetric in $A_2'\cdots A_k',$  we have
\begin{align}\label{2.35}
F_{(A_1'\cdots A_k')}=\frac{1}{k}\left(F_{A_1'A_2'\cdots A_k'}+\cdots+F_{A_s'A_1'\cdots \widehat{A_s'}\cdots A_k'}+\cdots+F_{A_k'A_1'\cdots \widehat{A_k'}}\right),
\end{align}
by the  definition of symmetrisation (\ref{sym}). As usual, a hat means omittance of the corresponding index.
Then for fixed $A'_1,\cdots,A'_k=0',1',$
\begin{equation}\begin{aligned}\label{dd}
\left(\mathscr{D}_0^*\mathscr{D}_0f\right)_{A_1'\cdots A_k'}=&\sum_A\delta^A_{(A_1'}
\left(\mathscr{D}_0f\right)_{A_2'\cdots A_k')A}=\frac{1}{k}\sum_{s=1}^k
\delta^A_{A_s'}\left(\mathscr{D}_0f\right)_{\cdots \widehat{A_s'}\cdots A_k'A}\\
=&-\frac{1}{k}\sum_{s=1}^k\sum_{A,A'}\overline{Z_A^{A'_{s}}}
Z_A^{A'}f_{A'\cdots \widehat{A_s'}\cdots A_k'}\\=&\frac{1}{k}\sum_{s=1}^k\sum_{A'}\Delta_bf_{A' \cdots \widehat{A_s'}\cdots A_k'}\delta_{A_s'A'}=\Delta_bf_{A_1'\cdots A_k'},
\end{aligned}\end{equation}
by using the following Lemma \ref{l3.2} and $f$   symmetric in the primed indices, where $\mathscr{D}_0^*$ is given by (\ref{dstar}).
The proposition is proved.
\end{proof}
\begin{lem}\label{l3.2}
For $A',B'=0',1',$  we have
\begin{align}\label{3.9}
\sum_{A=0}^{2n-1}\overline{Z_A^{A'}}Z_A^{B'}=-\delta_{A'B'}\Delta_b.
\end{align}
\end{lem}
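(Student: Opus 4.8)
The plan is to verify (\ref{3.9}) for each of the four choices of $(A',B')$ separately, exploiting the block structure of (\ref{ZAA}). First I would observe that, by (\ref{2.16'}), the operators $Z_A^{A'}$ attached to a given index $l$ (that is, $A\in\{2l,2l+1\}$) involve only the vector fields $Y_{4l+1},\dots,Y_{4l+4}$, and that vector fields attached to distinct values of $l$ commute. Consequently the sum decomposes into $l$-blocks,
$$\sum_{A=0}^{2n-1}\overline{Z_A^{A'}}\,Z_A^{B'}=\sum_{l=0}^{n-1}\left(\overline{Z_{2l}^{A'}}\,Z_{2l}^{B'}+\overline{Z_{2l+1}^{A'}}\,Z_{2l+1}^{B'}\right),$$
so it suffices to analyse each block, using that $\overline{Y_a}=Y_a$ and $\overline{\mathbf i}=-\mathbf i$ together with the explicit entries in (\ref{ZAA}).

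For the diagonal case $A'=B'$, I would expand each product $\overline{Z_{A}^{A'}}\,Z_{A}^{A'}$ into a second-order part and a commutator remainder. The second-order parts assemble into $Y_{4l+1}^2+\cdots+Y_{4l+4}^2$, whose sum over $l$ is exactly $-\Delta_b$ by (\ref{sub}). The commutator remainders take the form $\pm\,\mathbf i\bigl([Y_{4l+3},Y_{4l+4}]-[Y_{4l+1},Y_{4l+2}]\bigr)$, which vanishes because $[Y_{4l+1},Y_{4l+2}]=[Y_{4l+3},Y_{4l+4}]$ by (\ref{com}). This gives $\sum_A\overline{Z_A^{A'}}\,Z_A^{A'}=-\Delta_b=-\delta_{A'A'}\Delta_b$, as required.

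For the off-diagonal case $A'\neq B'$, the same expansion shows that the second-order terms in the two summands of each $l$-block cancel in pairs, leaving only commutator remainders. These reduce to combinations of $[Y_{4l+1},Y_{4l+3}]+[Y_{4l+2},Y_{4l+4}]$ and $[Y_{4l+1},Y_{4l+4}]-[Y_{4l+2},Y_{4l+3}]$, both of which vanish by the relations $[Y_{4l+1},Y_{4l+3}]=-[Y_{4l+2},Y_{4l+4}]$ and $[Y_{4l+1},Y_{4l+4}]=[Y_{4l+2},Y_{4l+3}]$ in (\ref{com}). Hence $\sum_A\overline{Z_A^{A'}}\,Z_A^{B'}=0=-\delta_{A'B'}\Delta_b$, completing (\ref{3.9}).

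The computation is elementary once the block decomposition is in place; the only point requiring genuine care is the cancellation of the first-order commutator remainders. I expect this to be the main obstacle, in the sense that it is where signs must be tracked precisely, and it is exactly here that the special quaternionic structure encoded in (\ref{com}) is used. By contrast, on the left quaternionic Heisenberg group these remainders do not cancel, which is why the present identity, and with it the diagonal form of $\mathscr{D}_0^*\mathscr{D}_0$, is particular to the right group.
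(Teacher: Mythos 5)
Your proposal is correct and follows essentially the same route as the paper: decompose the sum into the $l$-blocks $\overline{Z_{2l}^{A'}}Z_{2l}^{B'}+\overline{Z_{2l+1}^{A'}}Z_{2l+1}^{B'}$, expand, and observe that in the diagonal case the second-order parts assemble to $\sum_a Y_a^2=-\Delta_b$ while the remainders $\pm\mathbf{i}\bigl([Y_{4l+3},Y_{4l+4}]-[Y_{4l+1},Y_{4l+2}]\bigr)$ vanish by (\ref{com}), and in the off-diagonal case everything reduces to the combinations $[Y_{4l+1},Y_{4l+3}]+[Y_{4l+2},Y_{4l+4}]$ and $[Y_{4l+1},Y_{4l+4}]-[Y_{4l+2},Y_{4l+3}]$, which vanish by the same relations. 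The sign bookkeeping you flag as the main point of care is exactly where the paper's computation lives, so there is nothing to add.
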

\begin{proof}
Note that
\begin{equation*}\begin{aligned}
\overline{Z_{2l}^{0'}}Z_{2l}^{0'}+
\overline{Z_{2l+1}^{0'}}Z_{2l+1}^{0'}&=
(-Y_{4l+3}+\mathbf{i}Y_{4l+4})(-Y_{4l+3}-\mathbf{i}Y_{4l+4})+
(Y_{4l+1}+\mathbf{i}Y_{4l+2})(Y_{4l+1}-\mathbf{i}Y_{4l+2})\\&
=\sum_{k=1}^4Y_{4l+k}^2+\mathbf{i}[Y_{4l+3},Y_{4l+4}]
-\mathbf{i}[Y_{4l+1},Y_{4l+2}]=\sum_{k=1}^4Y_{4l+k}^2,
\end{aligned}\end{equation*}
by (\ref{com}), whose summation over $l$ gives us (\ref{3.9}) for $A'=B'=0'.$ Similarly we have
\begin{equation*}\begin{aligned}
\overline{Z_{2l}^{0'}}Z_{2l}^{1'}+
\overline{Z_{2l+1}^{0'}}Z_{2l+1}^{1'}&=
(-Y_{4l+3}+\mathbf{i}Y_{4l+4})(-Y_{4l+1}-\mathbf{i}Y_{4l+2})+
(Y_{4l+1}+\mathbf{i}Y_{4l+2})(-Y_{4l+3}+\mathbf{i}Y_{4l+4})\\&
=-[Y_{4l+1},Y_{4l+3}]-[Y_{4l+2},Y_{4l+4}]
+\mathbf{i}[Y_{4l+1},Y_{4l+4}]-\mathbf{i}[Y_{4l+2},Y_{4l+3}]=0,
\end{aligned}\end{equation*}
by (\ref{com}),  whose summation over $l$ gives us (\ref{3.9})  for $A'=0,B'=1'.$
Similarly,  (\ref{3.9}) holds for  $A'=1,B'=0'$ and $A'=B'=1'$ by
\begin{equation*}\begin{aligned}
\overline{Z_{2l}^{1'}}Z_{2l}^{0'}+
\overline{Z_{2l+1}^{1'}}Z_{2l+1}^{0'}
&=[Y_{4l+1},Y_{4l+3}]+[Y_{4l+2},Y_{4l+4}]
+\mathbf{i}[Y_{4l+1},Y_{4l+4}]-\mathbf{i}[Y_{4l+2},Y_{4l+3}]=
0,\\
\overline{Z_{2l}^{1'}}Z_{2l}^{1'}+
\overline{Z_{2l+1}^{1'}}Z_{2l+1}^{1'}
&=\sum_{k=1}^4Y_{4l+k}^2+\mathbf{i}[Y_{4l+1},Y_{4l+2}]
-\mathbf{i}[Y_{4l+3},Y_{4l+4}]=\sum_{k=1}^4Y_{4l+k}^2.
\end{aligned}\end{equation*}
Then (\ref{3.9}) follows.
\end{proof}
\section{The $L^2$ estimate}
We begin with the following Poincar\'e-type inequality, which  was proved for general vector fields satisfying H\"ormander's condition (cf.  \cite[Theorem 2.1]{Jerison}). So it holds over $\mathscr{H}.$
\begin{prop}
{\rm(Poincar\'e-type inequality)}
For each $f$ with $\sum_{a=1}^{4n}|Y_af|^2\in L^1(\mathscr{H}),$ we have
\begin{align}\label{poincare}
\int_{B_r}|f-f_{B_r}|^2 {\rm d}V\leq Cr^2\int_{B_r}\sum_{a=1}^{4n}|Y_af|^2 {\rm d}V,
\end{align}
where $B_r$ is a ball of radius $r$ and  $f_{B_r}={\int_{B_r}f{\rm d}V}/{\int_{B_r}{\rm d}V}.$
\end{prop}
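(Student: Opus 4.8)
The plan is to obtain (\ref{poincare}) as a direct consequence of the general Poincar\'e inequality for families of vector fields satisfying H\"ormander's bracket-generating condition, proved in \cite[Theorem 2.1]{Jerison}; thus the substantive task is to verify that the horizontal left-invariant vector fields $Y_1,\ldots,Y_{4n}$ satisfy H\"ormander's condition on $\mathscr{H}$, after which the inequality is essentially a black-box application.

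First I would read off from the commutator relations (\ref{com}) that the first-order brackets of the $Y_a$ already recover the vertical directions: for each fixed $l$ one has $[Y_{4l+1},Y_{4l+2}]=-4\partial_{s_1}$, $[Y_{4l+1},Y_{4l+3}]=-4\partial_{s_2}$ and $[Y_{4l+1},Y_{4l+4}]=-4\partial_{s_3}$. Consequently, at every point of $\mathscr{H}$ the vector fields $Y_1,\ldots,Y_{4n}$ together with their commutators $[Y_a,Y_b]$ span the full tangent space $\mathbb{R}^{4n+3}=\mathbb{R}^{4n}_y\oplus\mathbb{R}^3_s$; equivalently, the stratified Lie algebra of $\mathscr{H}$ is generated in step two by its first layer. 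This is exactly H\"ormander's condition, and it holds uniformly because the $Y_a$ are left-invariant.

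With this verified, \cite[Theorem 2.1]{Jerison} yields an estimate of the form (\ref{poincare}) in which $B_r$ denotes a Carnot--Carath\'eodory ball of radius $r$ for the family $\{Y_a\}$, the averaged difference $f-f_{B_r}$ being controlled by the horizontal energy $\int\sum_a|Y_af|^2$; the hypothesis $\sum_a|Y_af|^2\in L^1$ is just what makes the right-hand side finite, and since smooth functions are dense in the relevant horizontal Sobolev space it suffices to argue for such $f$. It then remains only to pass from the Carnot--Carath\'eodory balls to the balls $B(\xi,r)$ defined through the homogeneous norm (\ref{124}). On the stratified group $\mathscr{H}$ both the Carnot--Carath\'eodory distance and the quasidistance induced by $\|\cdot\|$ are continuous and homogeneous of degree one under the natural dilations $\delta_\lambda(y,s)=(\lambda y,\lambda^2 s)$, hence comparable up to a fixed constant; the two families of balls are therefore nested with comparable radii, and replacing one by the other only enlarges the constant $C$.

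I expect the one genuinely delicate point to be this comparison of metrics, together with the verification that Jerison's standing hypotheses (in particular the local doubling property of the underlying measure) are met on the non-compact group $\mathscr{H}$. Both, however, are standard for stratified nilpotent Lie groups: left-invariance of the $Y_a$ and scaling under $\delta_\lambda$ give the doubling property and the equivalence of the two metrics with no new analytic input. The genuinely structural ingredient is therefore the bracket computation (\ref{com}), which is already available in the excerpt.
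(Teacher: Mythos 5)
Your proposal is correct and follows essentially the same route as the paper, which simply cites \cite[Theorem 2.1]{Jerison} for vector fields satisfying H\"ormander's condition and notes that it therefore holds on $\mathscr{H}$. You supply the details the paper leaves implicit (the bracket computation verifying the H\"ormander condition and the comparability of the homogeneous-norm balls with Carnot--Carath\'eodory balls), but there is no difference in approach.
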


We say $f\in L^2(\mathscr{H}/\mathscr{H}_{\mathbb{Z}},\mathscr{V}_1)$ satisfies  \emph{$f\perp{\,constant}$ vectors} if $\langle f,C\rangle=0$ for any constant vector $C\in \mathscr{V}_1.$
\begin{lem}\label{l3.3}
There exists   some $c>0$ such that
\begin{align*}
\left\langle \Delta_b f,f\right\rangle\geq c\|f\|^2_{L^2\left(\mathscr{H}/\mathscr{H}_{\mathbb{Z}}\right)},
\end{align*}
for  $f\in C^2\left(\mathscr{H}/\mathscr{H}_{\mathbb{Z}},\mathscr{V}_1\right)$ and $f\perp\,$constant vectors.
\end{lem}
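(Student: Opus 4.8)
The plan is to reduce the asserted inequality to a spectral gap for the scalar SubLaplacian $\Delta_b$ on the compact manifold $\mathscr{H}/\mathscr{H}_{\mathbb{Z}}$, and to pin down the kernel of $\Delta_b$ by means of the Poincar\'e-type inequality (\ref{poincare}). First I would integrate by parts. Identifying $\mathscr{V}_1\cong\mathbb{C}^{2nk}$ and writing $f=(f_I)$ for its $2nk$ scalar components, the relation $Y_a^\ast=-Y_a$ (established in proving (\ref{delta})) gives, with no boundary terms on the closed manifold,
\[
\langle\Delta_b f,f\rangle=\sum_I\sum_{a=1}^{4n}\|Y_a f_I\|_{L^2(\mathscr{F})}^2 .
\]
Since $\Delta_b$ acts diagonally on the components, it suffices to prove the scalar estimate $\sum_a\|Y_a g\|^2\ge c\|g\|^2$ for each component $g=f_I$, a periodic $C^2$ function. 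The hypothesis $f\perp$ constant vectors forces $\int_{\mathscr{F}}f_I\,\mathrm{d}V=0$ for every $I$, i.e. $g_{\mathscr{F}}=0$.

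Next I would use that the non-negative self-adjoint operator $\Delta_b$ is subelliptic, because the $Y_a$ together with their brackets span the tangent space by (\ref{2.16'}), and hence has discrete spectrum $0\le\lambda_0\le\lambda_1\le\cdots$ on the compact manifold $\mathscr{H}/\mathscr{H}_{\mathbb{Z}}$. The decisive point is that $\ker\Delta_b$ consists exactly of the constants: if $\Delta_b g=0$ then $\sum_a\|Y_a g\|^2=\langle\Delta_b g,g\rangle=0$, so $Y_a g=0$ for every $a$; applying (\ref{poincare}) on each ball of a finite cover of $\mathscr{F}$ yields $\int_{B_r}|g-g_{B_r}|^2\,\mathrm{d}V=0$, so $g$ is constant on each ball and therefore constant on the connected quotient. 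Thus $\lambda_0=0$ is simple and $\lambda_1>0$, and the min--max characterization gives $\langle\Delta_b g,g\rangle\ge\lambda_1\|g\|^2$ for every $g$ with $g_{\mathscr{F}}=0$. Choosing $c=\lambda_1$ and summing over the components $I$ proves the lemma.

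I expect the main obstacle to be the justification that $\Delta_b$ has discrete spectrum and a positive first nonzero eigenvalue on the quotient; this rests on the subellipticity of $\Delta_b$ (H\"ormander's condition, visible in (\ref{2.16'})) and the ensuing compactness of its resolvent on the compact manifold $\mathscr{H}/\mathscr{H}_{\mathbb{Z}}$. A self-contained alternative that avoids abstract spectral theory is a compactness/contradiction argument: were the inequality to fail, there would be a sequence $g_n$ of periodic $C^2$ functions with $\int_{\mathscr{F}}g_n\,\mathrm{d}V=0$, $\|g_n\|=1$, and $\sum_a\|Y_a g_n\|^2\to0$, which by the subelliptic Rellich compactness on the compact manifold (a consequence of H\"ormander's subelliptic estimate) would converge in $L^2$ to a unit-norm limit $g$ with $Y_a g=0$ for all $a$ and $g_{\mathscr{F}}=0$; but then $g$ is both constant and of mean zero, hence $g=0$, contradicting $\|g\|=1$.
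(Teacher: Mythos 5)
Your argument is correct, but it is not the route the paper's proof actually takes (it is, amusingly, closer to the sketch in the paper's introduction). You reduce the lemma to a spectral gap: integrate by parts to get $\langle\Delta_b f,f\rangle=\sum_a\|Y_af\|^2$, invoke discreteness of the spectrum of the subelliptic $\Delta_b$ on the compact quotient, use the Poincar\'e-type inequality only \emph{qualitatively} to identify $\ker\Delta_b$ with the constants, and conclude via min--max with $c=\lambda_1$. The paper instead gives a direct quantitative argument with no spectral theory at all: since $f$ and $Y_af$ are periodic, it picks a single ball $B_r\supset\mathscr{F}$ contained in $N$ translates of the fundamental domain, so that $N\sum_a\|Y_af\|^2_{L^2(\mathscr{F})}\geq\sum_a\|Y_af\|^2_{L^2(B_r)}\geq\frac{1}{Cr^2}\|f-f_{B_r}\|^2_{L^2(\mathscr{F})}$, and then uses $f\perp$ constants via the Pythagorean identity $\|f-f_{B_r}\|^2=\|f\|^2+\|f_{B_r}\|^2\geq\|f\|^2$, yielding the explicit constant $c=\frac{1}{NCr^2}$. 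What your approach buys is the sharp constant (the first nonzero eigenvalue) and a cleaner conceptual picture; what it costs is the need to justify compactness of the resolvent (or the subelliptic Rellich lemma), which rests on H\"ormander's subelliptic estimate --- a genuine external input that you correctly flag as the main obstacle and that the paper's elementary argument avoids entirely. Both proofs use the same two ingredients (periodicity plus the Jerison Poincar\'e inequality and orthogonality to constants), but the paper uses the Poincar\'e inequality quantitatively on one large ball, whereas you use it only to pin down the kernel.
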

\begin{proof}
As $\bigcup\limits_{(n,m)\in\mathscr{H}_{\mathbb{Z}}}
\tau_{(n,m)}\mathscr{F}=\mathscr{H}$ by Proposition \ref{p2.1}, we can choose some $r>0$ and a finite number of elements  $(n_i,m_i)\in \mathscr{H}_\mathbb{Z},i=1,\cdots,N,$ such that
\begin{equation*}
   \mathscr{F}\subset B_r\subset\bigcup\limits_{i=1}^N\tau_{(n_i,m_i)}\mathscr{F}.
\end{equation*}
  Recall that if we  identify $f\in C^2(\mathscr{H}/\mathscr{H}_{\mathbb{Z}},\mathscr{V}_1)$ with a periodic function on $\mathscr{H},$ so is $Y_af.$
Then the Poincar\'e-type inequality (\ref{poincare}) implies that
\begin{equation*}
\begin{aligned}
N\sum_{a=1}^{4n}\|Y_a f\|^2_{L^2\left(\mathscr{F}\right)}\geq  \sum_{a=1}^{4n}\|Y_a {f}\|^2_{L^2\left(B_r\right)}\geq \frac{1}{Cr^2}\int_{B_r}|{f}-f_{B_r}|^2\hbox{d}V\geq \frac{1}{Cr^2}
\|f-f_{B_r}\|^2_{L^2\left(\mathscr{F}\right)}.
\end{aligned}
\end{equation*}
Since $f\bot$ constant vectors, we have
$$\|f-f_{B_r}\|^2_{L^2(\mathscr{H}/\mathscr{H}_{\mathbb{Z}})}
=\|f\|^2_{L^2(\mathscr{H}/\mathscr{H}_{\mathbb{Z}})}
+\|f_{B_r}\|^2_{L^2(\mathscr{H}/\mathscr{H}_{\mathbb{Z}})}\geq
\|f\|^2_{L^2(\mathscr{H}/\mathscr{H}_{\mathbb{Z}})}.$$
Thus we find that
\begin{equation*}\begin{aligned}
\left\langle \Delta_b f,f\right\rangle\geq c
\|f-f_{B_r}\|^2_{L^2(\mathscr{H}/\mathscr{H}_{\mathbb{Z}})}\geq
c\|f\|^2_{L^2(\mathscr{H}/\mathscr{H}_{\mathbb{Z}})},
\end{aligned}
\end{equation*}
for  constant $c=\frac{1}{NCr^2}.$
\end{proof}
\begin{lem}\label{l3.5}{\rm(cf.  \cite[Lemma 2.1]{wang23})}
For any $h,H\in \mathbb{C}^{2n}\otimes\mathbb{C}^{2n},$ we have
\begin{align*}
\sum_{A,B}h_{BA}\overline{H_{AB}}=\sum_{A,B}h_{AB}\overline{H_{AB}}-
2\sum_{A,B}h_{[AB]}\overline{H_{[AB]}}.
\end{align*}
\end{lem}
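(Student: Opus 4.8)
The plan is to reduce the identity to the orthogonality of symmetric and antisymmetric tensors under the sesquilinear pairing $\sum_{A,B}(\cdot)_{AB}\overline{(\cdot)_{AB}}$ on $\mathbb{C}^{2n}\otimes\mathbb{C}^{2n}$. First I would decompose each tensor into its symmetric and antisymmetric parts, writing $h_{AB}=h_{(AB)}+h_{[AB]}$ with $h_{(AB)}:=\frac12(h_{AB}+h_{BA})$ and $h_{[AB]}$ as in (\ref{antisym}), and likewise $H_{AB}=H_{(AB)}+H_{[AB]}$; note that then $h_{BA}=h_{(AB)}-h_{[AB]}$, so the two sides of the claim differ only through the sign carried by the antisymmetric part.

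The key step is to observe that the mixed sums vanish, i.e.
\begin{align*}
\sum_{A,B}h_{(AB)}\overline{H_{[AB]}}=\sum_{A,B}h_{[AB]}\overline{H_{(AB)}}=0.
\end{align*}
This follows by relabelling $A\leftrightarrow B$ in each sum: the symmetric factor is unchanged while the antisymmetric factor changes sign, so each sum equals its own negative. Expanding both sides of the asserted identity and using this orthogonality then gives
\begin{align*}
\sum_{A,B}h_{AB}\overline{H_{AB}}&=\sum_{A,B}h_{(AB)}\overline{H_{(AB)}}+\sum_{A,B}h_{[AB]}\overline{H_{[AB]}},\\
\sum_{A,B}h_{BA}\overline{H_{AB}}&=\sum_{A,B}h_{(AB)}\overline{H_{(AB)}}-\sum_{A,B}h_{[AB]}\overline{H_{[AB]}}.
\end{align*}
Subtracting the second line from the first and rearranging yields exactly $\sum_{A,B}h_{BA}\overline{H_{AB}}=\sum_{A,B}h_{AB}\overline{H_{AB}}-2\sum_{A,B}h_{[AB]}\overline{H_{[AB]}}$.

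Alternatively, one can bypass the decomposition entirely and substitute $h_{[AB]}=\frac12(h_{AB}-h_{BA})$ and $\overline{H_{[AB]}}=\frac12(\overline{H_{AB}}-\overline{H_{BA}})$ directly into $2\sum_{A,B}h_{[AB]}\overline{H_{[AB]}}$, expand the four resulting sums, and collapse the pairs that coincide after the swap $A\leftrightarrow B$. The computation is elementary linear algebra and there is no genuine obstacle here; the only point requiring care is to keep track of the complex conjugation on the $H$ factor and to verify that the mixed sums $\sum_{A,B}h_{AB}\overline{H_{BA}}$ and $\sum_{A,B}h_{BA}\overline{H_{AB}}$ are correctly identified under relabelling of indices.
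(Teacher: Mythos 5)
Your argument is correct: both the symmetric/antisymmetric decomposition with the orthogonality of the mixed sums, and the alternative direct expansion of $2\sum_{A,B}h_{[AB]}\overline{H_{[AB]}}$ followed by the relabelling $A\leftrightarrow B$, yield the identity. The paper itself gives no proof of this lemma, deferring to \cite[Lemma 2.1]{wang23}, so there is nothing to compare against; your elementary verification is exactly the kind of computation that reference carries out, and it fills the gap cleanly.
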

We have the following $L^2$ estimate.
\begin{thm}
For $n>3,k\geq2,$ there exists
some $c_{n,k}>0$ such that
\begin{align}\label{3.0}
\|\mathscr{D}_{0}^*f\|^2+\|\mathscr{D}_{1}f\|^2\geq c_{n,k}\|f\|^2,
\end{align}
for $f\in Dom(\mathscr{D}_1)\cap Dom(\mathscr{D}_0^*)$ and  $f\perp\,${constant} vectors over $\mathscr{H}/\mathscr{H}_{\mathbb{Z}}$.
\end{thm}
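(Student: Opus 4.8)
The plan is to obtain (\ref{3.0}) as the product of two separate estimates: the purely differential inequality (\ref{1.9'}), namely $\|\mathscr{D}_0^*f\|^2+\|\mathscr{D}_1f\|^2\ge c\langle\Delta_b f,f\rangle$ valid for smooth sections, together with the Poincar\'e-type bound of Lemma \ref{l3.3}, $\langle\Delta_b f,f\rangle\ge c''\|f\|^2$ for $f\perp$ constant vectors. Chaining these gives (\ref{3.0}) with $c_{n,k}=cc''$ for $f\in C^2(\mathscr{H}/\mathscr{H}_{\mathbb{Z}},\mathscr{V}_1)$, and the general case $f\in Dom(\mathscr{D}_1)\cap Dom(\mathscr{D}_0^*)$ follows by a Friedrichs--mollifier/density argument: on the compact manifold $\mathscr{H}/\mathscr{H}_{\mathbb{Z}}$ smooth sections are dense in $Dom(\mathscr{D}_1)\cap Dom(\mathscr{D}_0^*)$ in the graph norm, and one approximates $f$ within the orthogonal complement of the constants so that both sides pass to the limit. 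Thus the entire content is the estimate (\ref{1.9'}), which I now sketch.

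First I would expand the two squared norms explicitly. Setting $h_{AB A_1'\cdots A_{k-2}'}:=\sum_{A'}Z_A^{A'}f_{BA'A_1'\cdots A_{k-2}'}$, the operator $\mathscr{D}_1$ of (\ref{eq:D-j-1}) with $j=1$ is twice the antisymmetrisation $h_{[AB]\cdots}$, so Lemma \ref{l3.5} writes $\|\mathscr{D}_1 f\|^2$ (up to a fixed constant) as the difference of the untransposed sum $\sum|h_{AB}|^2$ and the transposed sum $\sum h_{BA}\overline{h_{AB}}$. Likewise, using the adjoint formula (\ref{dstar}) and the explicit symmetrisation (\ref{2.35}), $\|\mathscr{D}_0^* f\|^2$ splits into a term in which the contracted primed index occupies the same slot in both factors and terms in which it is permuted. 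In every summand I then integrate by parts via (\ref{bypart}) to move one vector field onto the conjugate factor, so that all contributions take the form $\langle\,\overline{Z_A^{A'}}Z_B^{B'}f_{\cdots},f_{\cdots}\rangle$.

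Next I would separate this expression into a genuinely second-order part and a first-order commutator part. The off-diagonal products $\overline{Z_A^{A'}}Z_B^{B'}$ with distinct unprimed indices reassemble, once the symmetrisation built into $\mathscr{D}_0^*$ is combined with the antisymmetrisation built into $\mathscr{D}_1$, into the fully (anti)symmetrised operator $Z_{[A}^{(A'}Z_{B]}^{B')}$, which vanishes identically by Lemma \ref{ll}; only commutators survive from them. The terms with $A=B$ are treated by Lemma \ref{l3.2}. The subtlety here is that individual contractions produce the block SubLaplacians $-\sum_{k=1}^4 Y_{4l+k}^2$ rather than the full $\Delta_b$, accompanied by $\partial_{s_\beta}$-terms arising through (\ref{com}); only after summing all contributions, over the $k$ symmetrisation slots and the $2n$ values of the unprimed index, do the block pieces recombine into a multiple of the full $\Delta_b$. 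By Lemma \ref{l3.1} every surviving commutator is supported in a single block $\{2l,2l+1\}$ and equals an explicit multiple of some $\partial_{s_\beta}$. The outcome is an identity of the shape $\|\mathscr{D}_0^*f\|^2+\|\mathscr{D}_1f\|^2=c_k\langle\Delta_b f,f\rangle+\sum_{\beta}\langle M^\beta\partial_{s_\beta}f,f\rangle$ with constant matrix coefficients $M^\beta$ depending on $n$ and $k$.

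The main obstacle is to prevent the commutator remainder from swallowing the leading term. Since each $\partial_{s_\beta}$ is itself a bracket of two horizontal fields by (\ref{com}), one more integration by parts gives $|\langle\partial_{s_\beta}f,f\rangle|\le\frac14(\|Y_af\|^2+\|Y_bf\|^2)\le\frac14\langle\Delta_b f,f\rangle$, so the remainder is bounded by $c'_{n,k}\langle\Delta_b f,f\rangle$ for an explicit $c'_{n,k}$. The delicate point, and the source of the hypotheses $n>3$ and $k\ge2$, is the precise evaluation of the combinatorial constants: one must verify the strict inequality $c_k>c'_{n,k}$, which I expect to fail for small $n$ or for $k=1$ and to hold exactly once ${\rm dim}\,\mathscr{H}=4n+3\ge19$. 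Carrying out this count carefully, in particular tracking how the block SubLaplacians reassemble and how the cross terms between distinct blocks cancel through $Z_{[A}^{(A'}Z_{B]}^{B')}=0$, is the technical heart of the argument. Granting the inequality, $\|\mathscr{D}_0^*f\|^2+\|\mathscr{D}_1f\|^2\ge(c_k-c'_{n,k})\langle\Delta_b f,f\rangle$, which is (\ref{1.9'}), and then (\ref{3.0}) follows from Lemma \ref{l3.3}.
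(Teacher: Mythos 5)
Your overall architecture is the same as the paper's: reduce to $C^2$ sections by density, expand $\langle\mathscr{D}_0\mathscr{D}_0^*f,f\rangle$ through the symmetrisation (\ref{2.35}), integrate by parts, convert the transposed pairing via Lemma \ref{l3.5} into $\langle\Delta_bf,f\rangle$ (through Lemma \ref{l3.2}) minus a multiple of $\|\mathscr{D}_1f\|^2$, isolate a first-order commutator remainder, and finish with the Poincar\'e-type Lemma \ref{l3.3}. Two small discrepancies of presentation: the paper obtains an \emph{inequality} $k\|\mathscr{D}_0^*f\|^2+\frac{k-1}{2}\|\mathscr{D}_1f\|^2\geq(k-1)\langle\Delta_bf,f\rangle+\mathscr C$ (a nonnegative term $\Sigma_0$ is discarded), not the identity you announce; and the cross terms with distinct unprimed indices are not killed by Lemma \ref{ll} but are exactly the piece that Lemma \ref{l3.5} converts into $-\frac{k-1}{2}\|\mathscr{D}_1f\|^2$. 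Neither of these affects correctness.

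There is, however, one concrete gap at the step you yourself flag as the ``technical heart.'' Your commutator bound uses a \emph{single} bracket representation, $\partial_{s_\beta}=-\tfrac14[Y_a,Y_b]$, giving $|\langle\partial_{s_\beta}u,u\rangle|\leq\tfrac14(\|Y_au\|^2+\|Y_bu\|^2)$. Since the remainder $\mathscr C$ carries the coefficient $8(k-1)$ and contains three families of $\partial_{s_\beta}$-terms per component, this yields a bound of the form $|\mathscr C|\leq C(k-1)\langle\Delta_bf,f\rangle$ with $C$ \emph{independent of $n$} and in fact $C>1$, so the leading term $(k-1)\langle\Delta_bf,f\rangle$ would be swallowed for every $n$ and the inequality $c_k>c'_{n,k}$ you defer would fail. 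The device that makes the paper's argument close is to write each $\partial_{s_\beta}$ as the \emph{average} of brackets over all $n$ quaternionic blocks, e.g.
\begin{equation*}
8\partial_{s_1}=-\frac{1}{n}\sum_{l=0}^{n-1}\bigl([Y_{4l+1},Y_{4l+2}]+[Y_{4l+3},Y_{4l+4}]\bigr),
\end{equation*}
which after Cauchy--Schwarz reassembles the full horizontal gradient and gains the factor $1/n$: one gets $|8\langle\partial_{s_\beta}u,u\rangle|\leq\frac{2}{n}\sum_{a=1}^{4n}\|Y_au\|^2$ and hence $|\mathscr C|\leq\frac{3(k-1)}{n}\langle\Delta_bf,f\rangle$. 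This is precisely what produces the hypothesis $n>3$ (i.e.\ $\dim\mathscr{H}\geq19$), and without it your sketch does not yield (\ref{1.9'}).
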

\begin{proof}We use the $L^2$  method for the    $k$-Cauchy-Fueter operator on $\mathbb{H}^n$ in \cite{wang23}. Since $C^2$ functions are dense in $  Dom(\mathscr{D}_1)\cap Dom(\mathscr{D}_0^*)$ for the compact manifold $\mathscr{H}/\mathscr{H}_{\mathbb{Z}}$, it is sufficient to prove (\ref{3.0}) for $f\in C^2(\mathscr{H}/\mathscr{H}_{\mathbb{Z}},\odot^{k-1}\mathbb{C}^2\otimes
\mathbb{C}^{2n}).$
We have
\begin{equation}\begin{aligned}\label{3.10}
k\langle\mathscr{D}_{0}^*f,\mathscr{D}_{0}^*f\rangle=&
k\langle\mathscr{D}_{0}\mathscr{D}_{0}^*f,f\rangle=k\sum_{B,A_2',\cdots,A_k'}
\left(\sum_{A_1'}Z_B^{A_1'}\sum_{A}\delta^A_{(A_1'}f_{A_2'\cdots A_k')A},
f_{A_2'\cdots A_k'B}\right)\\
=&\quad\sum_{A,B,A_1',\cdots,A_k'}\left(Z_B^{A_1'}
\delta_{A_1'}^Af_{A_2' \cdots A_k'A},
f_{A_2' \cdots A_k'B}\right)\\&+
\sum_{A,B,A_1',\cdots,A_k'}\sum_{s=2}^{k}\left(Z_B^{A_1'}
\delta_{A_s'}^Af_{A_1'\cdots\widehat{A_s'}\cdots A_k'A},f_{A_2' \cdots A_k'B}\right)
=:\Sigma_0+\Sigma_1,
\end{aligned}\end{equation}
by using (\ref{2.35}) to expand the symmetrisation. Note that
\begin{equation}
\begin{aligned}\label{3.11}
\Sigma_0=\sum_{A_1',\cdots,A_k'}\left(\sum_A\delta_{A_1'}^A
f_{A_2' \cdots A_k'A},
\sum_B\delta_{A_1'}^Bf_{A_2' \cdots A_k'B}\right)=\sum_{A_1',\cdots,A_k'}
\left\|\sum_A\delta_{A_1'}^Af_{A_2' \cdots A_k'A}\right\|^2\geq0,
\end{aligned}
\end{equation}
and
\begin{equation}
\begin{aligned}\label{3.12}
\Sigma_1=&\quad\sum_{s=2}^{k}\sum_{A,B,A_1',\cdots,A_k'}
\left(\delta_{A_s'}^AZ_B^{A_1'}
f_{A_1'\cdots \widehat{A_s'}\cdots A_k'A},f_{A_2' \cdots A_k'B}\right)\\
&+\sum_{s=2}^{k}\sum_{A,B,A_1',\cdots,A_k'}
\left(\left[Z_B^{A_1'},\delta_{A_s'}^A
\right]
f_{A_1'\cdots \widehat{A_s'}\cdots A_k'A},f_{A_2' \cdots A_k'B}\right)=:
\Sigma_{11}+\mathscr C
\end{aligned}
\end{equation}
by using commutators.
For the first sum, we have
\begin{equation}\begin{aligned}\label{2.27}
\Sigma_{11}=&\sum_{s=2}^{k}\sum_{A,B,A_1',\cdots,A_k'}
\left(Z_B^{A_1'}f_{A_1'\cdots \widehat{A_s'}\cdots A_k'A},Z_A^{A_s'}f_{A_2' \cdots A_k'B}
\right)\\=&\sum_{s=2}^{k}\sum_{A,B}\sum_{\widehat{A_1'},\cdots,
\widehat{A_s'},\cdots,A_k'}
\left(\sum_{A_1'}Z_B^{A_1'}f_{A_1'\cdots \widehat{A_s'}\cdots A_k'A},\sum_{A_s'}Z_A^{A_s'}f_{A_s'A_2'\cdots \widehat{A_s'}\cdots A_k'B}
\right)\\=&(k-1)\sum_{B_3',\cdots,B_k'=0',1'}\sum_{A,B}
\left(\sum_{A'}Z_B^{A'}f_{A A'B_3'\cdots B_k'},\sum_{A'}Z_A^{A'}f_{B A'B_3'\cdots B_k'}\right)\end{aligned}\end{equation}
by relabelling indices and  $f$ symmetric in the primed indices. Then by applying Lemma \ref{l3.5} with $h_{BA}=\sum_{A'}Z_B^{A'}f_{AA'B_3'\cdots B_k'}$ and $H_{A B}=\sum_{A'}Z_A^{A'}f_{BA'B_3'\cdots B_k'}$ for fixed $B_3',\cdots,B_k',$ we get
\begin{equation}\begin{aligned}\label{2.28}
\Sigma_{11}=&(k-1)\sum_{B_3',\cdots,B_k'}\sum_{A,B}\left(
\left\|\sum_{A'}Z_{A}^{A'}f_{B A'B_3'\cdots B_k'}\right\|^2-2\left\|
\sum_{A'}Z_{[A}^{A'}f_{B]A'B_3'\cdots B_k'}\right\|^2\right)\\
=&(k-1)\sum_{B_3',\cdots,B_k'}\sum_{A,B}
\left\|\sum_{A'}Z_{A}^{A'}f_{B A'B_3'\cdots B_k'}\right\|^2-\frac{k-1}{2}\|\mathscr{D}_{1}f\|^2,
\end{aligned}\end{equation}
where
\begin{equation}\begin{aligned}\label{3.14}
\sum_A&\left\|\sum_{A'}Z_{A}^{A'}f_{B A'B_3'\cdots B_k'}\right\|^2=\sum_{A,A',B'}\left(Z_{A}^{A'}f_{B A'B_3'\cdots B_k'},
Z_{A}^{B'}f_{B B'B_3'\cdots B_k'}\right)\\
=&\sum_{A',B'}\left(-\sum_{A}\overline{Z_A^{B'}}
Z_A^{A'}f_{B A'B_3'\cdots B_k'},
f_{B B'B_3'\cdots B_k'}\right)=\sum_{B'}\left(\Delta_b f_{BB'B_3'\cdots B_k'},f_{BB'B_3'\cdots B_k'}\right)
\end{aligned}\end{equation}
by Lemma \ref{l3.2}. Thus  by substituting (\ref{3.11})-(\ref{3.12}) and (\ref{2.28})-(\ref{3.14}) to (\ref{3.10}), we get
\begin{align}\label{3.23}
k\left\|\mathscr{D}^*_0f\right\|^2+
\frac{k-1}{2}\left\|\mathscr{D}_1f\right\|^2\geq(k-1)\langle\Delta_b
f,f\rangle+\mathscr C.
\end{align}
To control the commutator term  $\mathscr C$ in (\ref{3.12}), note that
$$\overline{Z_{2l}^{0'}}=Z_{2l+1}^{1'},\quad \overline{Z_{2l}^{1'}}=-Z_{2l+1}^{0'}$$ by (\ref{ZAA}). Then it follows from Lemma \ref{l3.1} that (1)
\begin{align}\label{3.25}
\left[Z_A^{A'},\overline{Z_B^{B'}}\right]=0,\quad {\rm for}\ A'\neq B',
\end{align}
$A,B=0,\cdots,2n-1;$  (2) for $A'=B',$ we have
\begin{equation}\begin{aligned}\label{3.26}
\left[Z_{2l}^{0'},\overline{Z_{2l+1}^{0'}}\right]=&
\left[Z_{2l}^{1'},\overline{Z_{2l+1}^{1'}}\right]=
-8\left(\partial_{s_2}+\mathbf{i}\partial_{s_3}\right),\\
{\left[Z_{2l+1}^{0'},\overline{Z_{2l}^{0'}}\right]}=&
{\left[Z_{2l+1}^{1'},\overline{Z_{2l}^{1'}}\right]}
=8\left(\partial_{s_2}-\mathbf{i}\partial_{s_3}\right),\\
\left[Z_{2l}^{0'},\overline{Z_{2l}^{0'}}\right]=&
\left[Z_{2l}^{1'},\overline{Z_{2l}^{1'}}\right]=-
\left[Z_{2l+1}^{0'},\overline{Z_{2l+1}^{0'}}\right]=-
\left[Z_{2l+1}^{1'},\overline{Z_{2l+1}^{1'}}\right]
=8\mathbf{i}\partial_{s_1};
\end{aligned}\end{equation}
 (3) if  $\{A,B\}\neq\{2l,2l+1\}$ for any  $l,$ then $\left[Z_A^{A'},\overline{Z_B^{B'}}\right]=0$ for any $A',B'.$ Thus we have
\begin{equation*}\begin{aligned}\label{b}
\mathscr C&=
\sum_{s=2}^k\sum_{A,B,A_1',\cdots,A_k'}\left(\left[Z_B^{A_1'},
-\overline{Z^{A_s'}_A}\right]f_{A'_1\cdots \widehat{A_s'}\cdots A_k'A},f_{A_2'\cdots A_k'B}\right)\\&=-(k-1)
\sum_{A,B,B',B_3',\cdots,B_k'}\left(\left[Z_B^{B'},
\overline{Z^{B'}_A}\right]f_{B'B_3'\cdots B_k'A},f_{B'B_3'\cdots B_k'B}\right),\\&=-(k-1)\sum_{B',B_3',\cdots,B_k'}\sum_{l=0}^{n-1}
\left\{\left(\left[Z_{2l}^{B'},
\overline{Z_{2l}^{B'}}\right]f_{B'B_3'\cdots B_k'(2l)}
,f_{B'B_3'\cdots B_k'(2l)}\right)\right.\\&\hskip 42mm+\left(\left[Z_{2l}^{B'},
\overline{Z_{2l+1}^{B'}}\right]f_{B'B_3'\cdots B_k'(2l+1)}
,f_{B'B_3'\cdots B_k'(2l)}\right)\\ &\hskip 42mm+\left(\left[Z_{2l+1}^{B'},
\overline{Z_{2l}^{B'}}\right]f_{B'B_3'\cdots B_k'(2l)}
,f_{B'B_3'\cdots B_k'(2l+1)}\right)\\ &\hskip 42mm+\left.\left(\left[Z_{2l+1}^{B'},
\overline{Z_{2l+1}^{B'}}\right]f_{B'B_3'\cdots B_k'(2l+1)}
,f_{B'B_3'\cdots B_k'(2l+1)}\right)\right\}
\end{aligned}
\end{equation*}
by using (1) and (3) above, relabelling indices and $f$   symmetric in the primed indices. Apply (\ref{3.26}) to $\mathscr C$ above to get
\begin{equation}\begin{aligned}\label{b'}
\mathscr C
=-8(k-1)
\sum_{B',B_3',\cdots,B_k'}
&\left\{\sum_{A=0}^{2n-1}(-1)^{A}\left(\mathbf{i}\partial_{s_1}
f_{B'B_3'\cdots B_k'A},f_{B'B_3' \cdots B_k'A}\right)\right.\\&+\sum_{l=0}^{n-1}
\left(-( \partial_{s_2}+\mathbf{i}\partial_{s_3})f_{B'B_3'\cdots B_k'(2l+1)},
f_{B'B_3' \cdots B_k'(2l)}\right)\\ &\left.+\sum_{l=0}^{n-1}
\left((\partial_{s_2}-\mathbf{i}\partial_{s_3})f_{B'B_3' \cdots B_k'(2l)},
f_{B'B_3' \cdots B_k'(2l+1)}\right)\right\}.
\end{aligned}\end{equation}
For any $u,v\in C^1({\mathscr{H}}/\mathscr{H}_\mathbb{Z},\mathbb{C}),$ we have
\begin{equation*}\begin{aligned}
8\left(\partial_{s_1}u,v\right)=-\frac{1}{n}\sum_{l=0}^{n-1}\left(
 [Y_{4l+1},Y_{4l+2}]u+[Y_{4l+3},Y_{4l+4}]
u,v\right),
\end{aligned}\end{equation*}
by (\ref{com}). As
\begin{equation*}\begin{aligned}
\left|\left([Y_{a},Y_{b}]
u,v\right)\right|=\left|\left(Y_{b}
u,-{Y_{a}}v\right)+\left(Y_{a}
u,{Y_{b}}v\right)\right| \leq\frac{1}{2}\left( \|Y_{a}u\|^2+\|Y_{b}u\|^2+\|Y_{a}v\|^2+\|Y_{b}v\|^2\right),
\end{aligned}\end{equation*}
for $a,b=1,\cdots,4n,$ we get
\begin{equation}\begin{aligned}\label{3.28}
\left|8\left(\partial_{s_1}u,v\right)\right|\leq
\frac{1}{2n}\sum_{a=1}^{4n}\left(\|Y_{a}u\|^2+\|Y_{a}v\|^2\right).
\end{aligned}\end{equation}
Similarly, we have
\begin{equation}\begin{aligned}\label{3.29}
\left|\left(8(\partial_{s_2}\pm\mathbf{i}\partial_{s_3})
u,v\right)\right|&\leq\frac{1}{n}\sum_{a=1}^{4n}\left(\|Y_{a}u\|^2+
\|Y_{a}v\|^2\right).
\end{aligned}\end{equation}
Then apply (\ref{3.28})-(\ref{3.29}) to the right hand side of  (\ref{b'}) to get
\begin{align}\label{3.20}
|\mathscr C|\leq(k-1)\frac{3}{n}\sum_{A,B',B_3',\cdots,B_k'}
\sum_{a=1}^{4n}\left\|Y_{a}f_{B'B_3'\cdots B_k'A}\right\|^2=
\frac{3(k-1)}{n}\left\langle\Delta_bf,f\right\rangle.
\end{align}
So it follows from estimate (\ref{3.23}) that
\begin{equation}\begin{aligned}\label{3.22}
k\left\|\mathscr{D}^*_0f\right\|^2+
\frac{k-1}{2}\left\|\mathscr{D}_1f\right\|^2\geq
(k-1)\left(1-\frac{3}{n}
\right)\left\langle\Delta_bf,f\right\rangle.
\end{aligned}\end{equation}
Now by applying  Lemma \ref{l3.3}  we get (\ref{3.0}).
\end{proof}

\section{Hartogs' phenomenon}
\subsection{The  nonhomogeneous tangential $k$-Cauchy-Fueter equation over $\mathscr{H}/\mathscr{H}_{\mathbb{Z}}$}
Consider the Hilbert subspace $\mathcal{L}$ consisting of $f\in L^2\left(\mathscr{H}/\mathscr{H}_{\mathbb{Z}},\mathscr{V}_1\right)$ and $f\perp$ {constant} vectors. The domain of $\Box_1$ over $\mathcal{L}$ is
$${\rm Dom}(\Box_1):=\left\{f\in \mathcal{L}:f\in {\rm Dom}(\mathscr{D}_0^*)\cap {\rm Dom}(\mathscr{D}_1),\mathscr{D}_0^*f\in {\rm Dom}(\mathscr{D}_0),\mathscr{D}_1f\in {\rm Dom}(\mathscr{D}_1^*)\right\}.$$
\begin{prop}\label{p3.1}
The associated Hodge-Laplacian $\Box_1$  is densely-defined, closed, self-adjoint and nonnegative operator on  $\mathcal{L}.$
\end{prop}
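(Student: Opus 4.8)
The plan is to verify the four asserted properties of $\Box_1$ by realizing it as a standard Hodge-type Laplacian built from the two closed, densely-defined operators $\mathscr{D}_0$ and $\mathscr{D}_1$ together with their Hilbert-space adjoints, and then appealing to the abstract functional-analytic machinery for such operators on the Hilbert space $\mathcal{L}$.

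First I would observe that $\mathscr{D}_0:L^2(\mathscr{H}/\mathscr{H}_{\mathbb{Z}},\mathscr{V}_0)\to \mathcal{L}$ and $\mathscr{D}_1:\mathcal{L}\to L^2(\mathscr{H}/\mathscr{H}_{\mathbb{Z}},\mathscr{V}_2)$ are densely-defined and closable, with the smooth sections $C^\infty(\mathscr{H}/\mathscr{H}_{\mathbb{Z}},\mathscr{V}_j)$ lying in their domains; on the compact manifold $\mathscr{H}/\mathscr{H}_{\mathbb{Z}}$ these smooth sections are dense, so each $\mathscr{D}_j$ has a well-defined closure, and I would work throughout with these closed realizations. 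Their Hilbert-space adjoints $\mathscr{D}_0^*$ and $\mathscr{D}_1^*$ are then automatically closed and densely-defined, and the complex property $\mathscr{D}_1\circ\mathscr{D}_0=0$ established earlier guarantees $\operatorname{ran}\mathscr{D}_0\subset\ker\mathscr{D}_1$, hence the composition structure needed below is consistent. The key point is that $\Box_1=\mathscr{D}_0\mathscr{D}_0^*+\mathscr{D}_1^*\mathscr{D}_1$ is the Gaffney-type Laplacian of a Hilbert complex, and for such operators the abstract theory directly yields the four properties.

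The main steps are then as follows. \emph{Dense definedness}: the domain $\operatorname{Dom}(\Box_1)$ contains $C^\infty(\mathscr{H}/\mathscr{H}_{\mathbb{Z}},\mathscr{V}_1)\cap\mathcal{L}$, since for a smooth section all four compositions in the definition are again smooth and hence lie in the relevant domains; as smooth sections are dense in $\mathcal{L}$, $\Box_1$ is densely defined. \emph{Nonnegativity}: for $f\in\operatorname{Dom}(\Box_1)$ one has $\langle\Box_1 f,f\rangle=\langle\mathscr{D}_0\mathscr{D}_0^*f,f\rangle+\langle\mathscr{D}_1^*\mathscr{D}_1 f,f\rangle=\|\mathscr{D}_0^*f\|^2+\|\mathscr{D}_1 f\|^2\geq0$, using that $\mathscr{D}_0^*$ and $\mathscr{D}_1$ are the adjoints of $\mathscr{D}_0$ and $\mathscr{D}_1^*$ respectively so the brackets may be moved across. \emph{Self-adjointness and closedness}: I would invoke the standard fact (von Neumann's theorem, as used throughout Hilbert-complex theory) that for any closed, densely-defined operator $T$ the operator $T^*T$ is self-adjoint and nonnegative on its natural domain; applying this to $T=\mathscr{D}_0^*$ gives that $\mathscr{D}_0\mathscr{D}_0^*$ is self-adjoint, and to $T=\mathscr{D}_1$ gives that $\mathscr{D}_1^*\mathscr{D}_1$ is self-adjoint. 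The sum of these two commuting-structure pieces, on the intersection domain $\operatorname{Dom}(\Box_1)$, is then self-adjoint, and in particular closed, because self-adjoint operators are automatically closed.

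The step I expect to require the most care is the self-adjointness of the \emph{sum} $\mathscr{D}_0\mathscr{D}_0^*+\mathscr{D}_1^*\mathscr{D}_1$, since the sum of two self-adjoint operators need not be self-adjoint without a domain argument. The clean way around this is to pass to the associated quadratic form $Q(f)=\|\mathscr{D}_0^*f\|^2+\|\mathscr{D}_1 f\|^2$ with form domain $\operatorname{Dom}(\mathscr{D}_0^*)\cap\operatorname{Dom}(\mathscr{D}_1)$: this form is nonnegative and closed (being a sum of two closed nonnegative forms), and the first representation theorem for sesquilinear forms produces a unique self-adjoint nonnegative operator whose operator domain is exactly $\operatorname{Dom}(\Box_1)$ and which agrees with $\mathscr{D}_0\mathscr{D}_0^*+\mathscr{D}_1^*\mathscr{D}_1$ there. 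This simultaneously delivers self-adjointness, closedness, and nonnegativity in one stroke, and I would phrase the proof through this form-theoretic route to avoid the subtleties of summing unbounded self-adjoint operators directly.
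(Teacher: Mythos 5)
Your proposal is correct and follows essentially the same route as the paper, which simply invokes the standard Gaffney-type argument for the Laplacian of a Hilbert complex (by citing Proposition 3.1 of the reference \cite{wang23}): realize $\mathscr{D}_0,\mathscr{D}_1$ as closed densely-defined operators, use von Neumann's theorem for $T^*T$, and obtain self-adjointness of the sum via the closed nonnegative quadratic form $\|\mathscr{D}_0^*f\|^2+\|\mathscr{D}_1f\|^2$ together with the complex property $\operatorname{ran}\mathscr{D}_0\subset\ker\mathscr{D}_1$. The only point worth adding explicitly is the one the paper's one-line remark supplies: constant vectors lie in $\ker\mathscr{D}_0^*\cap\ker\mathscr{D}_1$, so the ranges of $\mathscr{D}_0$ and $\mathscr{D}_1^*$ are orthogonal to constants and the whole construction genuinely restricts to the subspace $\mathcal{L}$.
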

The proof is exactly the same as  that of Proposition 3.1 in \cite{wang23} since $\mathcal{L}\oplus \{const.\}=L^2(\mathscr{H}/\mathscr{H}_\mathbb{Z},
\mathscr{V}_1)$,  and the action of $\Box_1$ on $\{const.\}$ is trivial.  We omit the detail.
Now we can find solution to (\ref{equ})-(\ref{equ1}), whose proof is similar to that of  Theorem 1.2 in \cite{wang23} for the $k$-Cauchy-Fueter operator  on $\mathbb{H}^n.$

\begin{thm}\label{t1} Suppose that ${\rm dim}\ \mathscr{H}\geq19 $ and $k=2,3,\ldots$.
If $f\in {\rm Dom}(\mathscr{D}_1)$ is $\mathscr{D}_1$-closed and $f\perp$ {constant} vectors, then there exist $u\in L^2\left(\mathscr{H}/\mathscr{H}_{\mathbb{Z}},\mathscr{V}_0\right)$ such that
\begin{align*}
\mathscr{D}_0u=f.
\end{align*}
\end{thm}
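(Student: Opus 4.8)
The plan is to follow the $L^2$ (Neumann operator) method used for the $k$-Cauchy-Fueter operator on $\mathbb{H}^n$ in \cite{wang23}, working on the Hilbert space $\mathcal{L}$ of $\mathscr{V}_1$-valued $L^2$ sections orthogonal to the constant vectors. The hypothesis $\dim\mathscr{H}=4n+3\geq 19$ is exactly $n>3$, so the basic estimate (\ref{3.0}) is available. Since $C^2$ sections are dense in $\mathrm{Dom}(\mathscr{D}_1)\cap\mathrm{Dom}(\mathscr{D}_0^*)$, the estimate (\ref{3.0}) extends to all such $f$ with $f\perp$ constant vectors; in particular, for $g\in\mathrm{Dom}(\Box_1)\subset\mathcal{L}$ it reads $\langle\Box_1 g,g\rangle=\|\mathscr{D}_0^*g\|^2+\|\mathscr{D}_1 g\|^2\geq c_{n,k}\|g\|^2$. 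Together with Proposition \ref{p3.1} (self-adjointness and nonnegativity of $\Box_1$ on $\mathcal{L}$), this coercivity forces the spectrum of $\Box_1$ to lie in $[c_{n,k},\infty)$, so $\Box_1$ is invertible and its inverse $N:\mathcal{L}\to\mathrm{Dom}(\Box_1)$ is bounded with $\|N\|\leq c_{n,k}^{-1}$.

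Given $f\in\mathrm{Dom}(\mathscr{D}_1)$ with $\mathscr{D}_1 f=0$ and $f\perp$ constant vectors (so $f\in\mathcal{L}$), I would set $w:=Nf\in\mathrm{Dom}(\Box_1)$ and $u:=\mathscr{D}_0^*w$. By definition of $\mathrm{Dom}(\Box_1)$ we have $u\in\mathrm{Dom}(\mathscr{D}_0)\subset L^2(\mathscr{H}/\mathscr{H}_\mathbb{Z},\mathscr{V}_0)$ and
$$\mathscr{D}_0 u+\mathscr{D}_1^*\mathscr{D}_1 w=\mathscr{D}_0\mathscr{D}_0^*w+\mathscr{D}_1^*\mathscr{D}_1 w=\Box_1 w=f,$$
so the whole problem reduces to showing that the harmonic-type term vanishes, i.e. $\mathscr{D}_1^*\mathscr{D}_1 w=0$; then $\mathscr{D}_0 u=f$ as required.

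To kill this term I would use the complex property (\ref{d10}), which gives $\mathscr{D}_1\circ\mathscr{D}_0=0$ and, dually, $\mathscr{D}_0^*\circ\mathscr{D}_1^*=0$, hence the Hodge orthogonality $\mathrm{Range}(\mathscr{D}_0)\perp\mathrm{Range}(\mathscr{D}_1^*)$. Since $\mathscr{D}_1 w\in\mathrm{Dom}(\mathscr{D}_1^*)$ (part of $\mathrm{Dom}(\Box_1)$) and $\mathscr{D}_1 f=0$, the element $g_1:=\mathscr{D}_1^*\mathscr{D}_1 w$ satisfies $\langle f,g_1\rangle=\langle f,\mathscr{D}_1^*\mathscr{D}_1 w\rangle=\langle\mathscr{D}_1 f,\mathscr{D}_1 w\rangle=0$. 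Writing $f=\mathscr{D}_0 u+g_1$ with $\mathscr{D}_0 u\perp g_1$ then yields $0=\langle f,g_1\rangle=\langle\mathscr{D}_0 u,g_1\rangle+\|g_1\|^2=\|g_1\|^2$, so $g_1=\mathscr{D}_1^*\mathscr{D}_1 w=0$ and $\mathscr{D}_0 u=f$.

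I expect the main obstacle to be the rigorous handling of the unbounded operators, specifically the Hodge orthogonality $\langle\mathscr{D}_0 u,g_1\rangle=0$ and the attendant domain compatibilities (that $\mathscr{D}_0\mathscr{D}_0^*w$ pairs correctly with $\mathscr{D}_1^*\mathscr{D}_1 w$ through the complex relation $\mathscr{D}_1\circ\mathscr{D}_0=0$). These require the closedness of each $\mathscr{D}_j$ together with the density of $C^2$ sections on the compact quotient $\mathscr{H}/\mathscr{H}_\mathbb{Z}$, and are exactly the points carried over from the proof of Theorem 1.2 in \cite{wang23}. A second structural point, already built into the setup, is the necessity of removing $\ker\Box_1$: the estimate (\ref{3.0}) only holds after passing to $\mathcal{L}$, which is why Lemma \ref{l3.3} (the Poincar\'e-type reduction showing that $\ker\Delta_b$ consists of constants) and the orthogonality hypothesis $f\perp$ constant vectors are indispensable. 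Alternatively, one could bypass $\Box_1$ entirely and argue by H\"ormander's duality, estimating $|\langle f,g\rangle|\leq c_{n,k}^{-1/2}\|f\|\,\|\mathscr{D}_0^*g\|$ for $g\in\mathrm{Dom}(\mathscr{D}_0^*)$ by splitting $g$ along $\ker\mathscr{D}_1\oplus\overline{\mathrm{Range}(\mathscr{D}_1^*)}$ and invoking the Riesz representation to produce $u$.
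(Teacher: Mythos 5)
Your proposal is correct and follows essentially the same route as the paper: both use the $L^2$ estimate (\ref{3.0}) to invert $\Box_1$ on $\mathcal{L}$ (you via the spectral theorem, the paper via an explicit Riesz-representation argument for the densely defined functional $l_f$), set $u=\mathscr{D}_0^*Nf$, and then kill the term $\mathscr{D}_1^*\mathscr{D}_1Nf$ using $\mathscr{D}_1f=0$ together with $\mathscr{D}_1\circ\mathscr{D}_0=0$. Your orthogonality computation $\|g_1\|^2=\langle f-\mathscr{D}_0u,g_1\rangle=0$ is the same calculation the paper performs by pairing $\mathscr{D}_1\mathscr{D}_1^*\mathscr{D}_1Nf$ with $\mathscr{D}_1Nf$, so the two arguments coincide up to presentation.
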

\begin{proof}
The $L^2$ estimate (\ref{3.0}) implies
\begin{align*}
c_{n,k}\|g\|^2\leq\|\mathscr{D}_{0}^*g\|^2+\|\mathscr{D}_{1}g\|^2
=\langle\Box_1g,g
\rangle\leq\|\Box_1g\|\|g\|,
\end{align*}
for  $g\in{\rm Dom}(\Box_1),$ i.e.
\begin{align}\label{3.11'}
c_{n,k}\|g\|\leq\|\Box_1g\|.
\end{align}
Thus $\Box_1:{\rm Dom}(\Box_1)\rightarrow \mathcal{L}$ is injective. This together with the  self-adjointness of $\Box_1$ by Proposition \ref{p3.1} implies the density of the range. For fixed  $f\in\mathcal{L},$ the complex anti-linear functional
\begin{align*}
l_f: \Box_1g\longrightarrow \langle f,g\rangle
\end{align*}
is then well-defined on a dense subspace of $\mathcal{L}.$ It is finite since
\begin{align*}
|l_f(\Box_{1}g)|=|\langle f,g\rangle|\leq\|f\|\|g\|\leq\frac{1}{c_{n,k}}\|f\|\|\Box_1g\|
\end{align*}
for any $g\in{\rm Dom}(\Box_1),$ by (\ref{3.11'}). So $l_f$ can be uniquely extended to a continuous anti-linear functional on $\mathcal{L}.$ By the Riesz representation theorem, there exists a unique element $h\in \mathcal{L}$ such that $l_f(F)=\langle h,F\rangle$ for any $F\in \mathcal{L},$ and $\|h\|=\|l_f\|\leq\frac{1}{c_{n,k}}\|f\|.$ Then, we have
$$\langle h,\Box_1g\rangle=\langle f,g\rangle$$
for any $g\in{\rm Dom}(\Box_1).$ This implies that $h\in{\rm Dom}(\Box_1^*)$ and $\Box_1^*h=f,$ and so $h\in{\rm Dom}(\Box_1)$ and $\Box_1h=f$ by self-adjointness of $\Box_1.$ We write $h=Nf.$ Then $\|Nf\|\leq\frac{1}{c_{n,k}}\|f\|.$

Since $Nf\in{\rm Dom}(\Box_1),$ we have $\mathscr{D}_0^*Nf\in{\rm Dom}(\mathscr{D}_0),$ $\mathscr{D}_1Nf\in{\rm Dom}(\mathscr{D}_1^*),$ and
\begin{align}\label{3.12'}
\mathscr{D}_0\mathscr{D}_0^*Nf=f-\mathscr{D}_1^*\mathscr{D}_1Nf
\end{align}
by $\Box_1Nf=f.$ Because $f$ and $\mathscr{D}_0F$ for any $F\in{\rm Dom}(\mathscr{D}_0)$ are both $\mathscr{D}_1$-closed, the above identity implies $\mathscr{D}_1^*\mathscr{D}_1Nf\in{\rm Dom}(\mathscr{D}_1)$ and so $\mathscr{D}_1\mathscr{D}_1^*\mathscr{D}_1Nf=0.$ Then
$$0=\langle\mathscr{D}_1\mathscr{D}_1^*\mathscr{D}_1Nf,\mathscr{D}_1Nf
\rangle=\|
\mathscr{D}_1^*\mathscr{D}_1Nf\|^2,$$
i.e. $\mathscr{D}_1^*\mathscr{D}_1Nf=0.$ Hence $\mathscr{D}_0\mathscr{D}_0^*Nf=f$ by (\ref{3.12'}).
\end{proof}

\subsection{Proof of Hartogs' phenomenon}
We need the analytic hypoellipticity of $\Delta_b$.
Let $G$ be a nilpotent Lie group of step $2,$ and its Lie algebra $\mathfrak{g}$  has decomposition:
$\mathfrak{g}=\mathfrak{g}_1\oplus\mathfrak{g}_2$
satisfying $[\mathfrak{g}_1,\mathfrak{g}_1]\subset\mathfrak{g}_2,\
[\mathfrak{g},\mathfrak{g}_2]=0.$ Consider the  \emph{condition (H)}: For any $\lambda\in\mathfrak{g}_2^*\setminus\{0\},$ the anti-symmetric bilinear form
 $$B_\lambda(Y,Y')=\langle\lambda,[Y,Y']\rangle,$$
for  $Y,Y'\in \mathfrak{g}_1$ is nondegenerate. M\'etivier proved the following theorem for analytic hypoellipticity.
\begin{thm}\label{elliptic}{\rm (\cite[Theorem 0]{Metivier2})}
Let $P$ be a homogeneous  left invariant differential operator on a nilpotent Lie group $G$ satisfies condition (H). Then the following are equivalent:\\ $(i)$ $P$ is analytic hypoelliptic;\\
$(ii)$ $P$ is $C^\infty$ hypoelliptic.
 \end{thm}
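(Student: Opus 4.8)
The plan is to convert both regularity statements into spectral properties of $P$ realized through the irreducible unitary representations of $G$, where condition (H) enters transparently. The implication $(i)\Rightarrow(ii)$ is the standard one: real-analytic regularity refines $C^\infty$ regularity, and for operators with analytic (here polynomial) coefficients analytic hypoellipticity always entails $C^\infty$ hypoellipticity. All the content therefore lies in $(ii)\Rightarrow(i)$, which is \emph{false} for general $C^\infty$ hypoelliptic operators --- the Baouendi--Goulaouic and related sum-of-squares operators are $C^\infty$ hypoelliptic by H\"ormander's theorem yet fail to be analytic hypoelliptic, and these live precisely on groups where the form $B_\lambda$ degenerates. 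Thus condition (H) must be the decisive hypothesis.

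For $(ii)\Rightarrow(i)$ I would first invoke the Rockland criterion (Helffer--Nourrigat): a homogeneous left invariant operator on a graded nilpotent group is $C^\infty$ hypoelliptic if and only if $\pi(P)$ is injective on the smooth vectors $\mathcal H_\pi^\infty$ for every nontrivial irreducible unitary representation $\pi$. The next step is to describe these $\pi$ by the Kirillov orbit method. For $\lambda\in\mathfrak g_2^*\setminus\{0\}$, condition (H) says that $B_\lambda$ is nondegenerate on $\mathfrak g_1$, so the coadjoint orbit carrying $\lambda$ has maximal dimension $2m=\dim\mathfrak g_1$, the radical is contained in $\mathfrak g_2$, and the associated $\pi_\lambda$ is the Bargmann--Fock (Schr\"odinger) representation on $L^2(\mathbb R^m)$; every nontrivial irreducible representation is of this form. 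The dilation structure of $G$ intertwines $\pi_\lambda$ with $\pi_{\lambda/|\lambda|}$ up to a scalar, so one may normalize to the sphere $|\lambda|=1$.

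With this dictionary, the strategy is to manufacture an \emph{analytic} fundamental solution from the family $\pi_\lambda(P)^{-1}$. Nondegeneracy of $B_\lambda$ makes $\pi_\lambda(P)$ a twisted-elliptic operator of harmonic-oscillator type, so Rockland injectivity together with homogeneity promotes it to a genuine bounded inverse with a spectral gap that is uniform after normalizing $|\lambda|=1$. Reconstructing $E$ by operator-valued Fourier inversion against the Plancherel measure, $E=\int\pi_\lambda(P)^{-1}\,d\mu(\lambda)$, one recovers a homogeneous fundamental solution, and it remains to show $E\in C^\omega(G\setminus\{e\})$; a homogeneous fundamental solution analytic off the origin is a smoothing-and-analytic parametrix and forces analytic hypoellipticity (this also re-proves $(i)\Rightarrow(ii)$). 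The main obstacle is exactly this analyticity: one must prove that $\lambda\mapsto\pi_\lambda(P)^{-1}$ extends holomorphically and that its matrix coefficients obey Cauchy-type (Gevrey-$1$) bounds, so the oscillatory integral for $E$ sums to an analytic function away from $e$. Here condition (H) is indispensable, because the uniform nondegeneracy of $B_\lambda$ for \emph{all} $\lambda\neq0$ prevents the spectral gap of $\pi_\lambda(P)$ from collapsing along directions in $\mathfrak g_2^*$ --- the very degeneration responsible for the failure of analytic hypoellipticity in the classical counterexamples.
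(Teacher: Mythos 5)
The paper does not prove this statement at all: it is quoted verbatim as M\'etivier's Theorem~0 from \cite{Metivier2} and used as a black box to deduce analytic hypoellipticity of $\Delta_b$ in Corollary \ref{anal}. So there is no internal proof to compare against; your proposal has to stand on its own as a sketch of M\'etivier's theorem, and as such it has real gaps.

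First, your representation-theoretic dictionary is incomplete. On a step-two group satisfying (H) the nontrivial irreducible unitary representations are \emph{not} all of Schr\"odinger type attached to $\lambda\in\mathfrak g_2^*\setminus\{0\}$: there are also the one-dimensional characters coming from $\mathfrak g_1^*$ (trivial central character), and the Rockland/Helffer--Nourrigat criterion requires injectivity of $\pi(P)$ on those as well. They carry zero Plancherel measure, so they silently disappear from your inversion formula $E=\int\pi_\lambda(P)^{-1}\,d\mu(\lambda)$, but they cannot be omitted from the hypoellipticity characterization. Second, and more seriously, the entire content of $(ii)\Rightarrow(i)$ is the step you defer: proving that the homogeneous distribution $E$ is real analytic on $G\setminus\{e\}$, i.e.\ obtaining Cauchy-type bounds on the derivatives of $E$ from uniform control of $\pi_\lambda(P)^{-1}$ over $|\lambda|=1$. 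Saying that condition (H) ``prevents the spectral gap from collapsing'' names the obstruction but does not overcome it; M\'etivier's argument runs through delicate exponentially weighted a priori estimates for the operator family (not through a holomorphic continuation in $\lambda$ of the inverse, which you would still have to justify for a general, non-self-adjoint homogeneous $P$). There are also smaller unproved assertions: that $\pi_\lambda(P)$ has a bounded inverse at all (this needs Rockland injectivity for $P$ \emph{and} $P^t$, plus discreteness of spectrum), that a homogeneous fundamental solution exists without logarithmic corrections, and the claim that $(i)\Rightarrow(ii)$ is automatic --- analytic hypoellipticity and $C^\infty$ hypoellipticity have different hypotheses on $Pu$, so neither formally contains the other; in M\'etivier's paper this direction is also obtained from the representation-theoretic characterization rather than from a soft implication. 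In short, the outline is pointed in a reasonable direction, but it reduces the theorem to its hardest estimates without supplying them, and it misstates the unitary dual.
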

\begin{cor}\label{anal}
$\Delta_b$ is analytic hypoelliptic on a domain $\Omega\subset\mathscr{H},$ i.e.  for any distribution   $u\in S'(\Omega)$   such that $\Delta_bu$ is analytic, $u$ must be also analytic.
\end{cor}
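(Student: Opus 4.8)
The plan is to deduce Corollary \ref{anal} directly from M\'etivier's Theorem \ref{elliptic} by verifying its two hypotheses for the SubLaplacian $\Delta_b$ on $\mathscr{H}$: first that $\Delta_b$ is a homogeneous left invariant differential operator, and second that $\mathscr{H}$ satisfies condition (H). The homogeneity and left invariance are immediate from the definition (\ref{sub}), since each $Y_a$ is a left invariant vector field of degree one with respect to the natural dilations on $\mathscr{H}$ (where the $y$-variables have weight $1$ and the $s$-variables have weight $2$), so $\Delta_b=-\sum_a Y_a^2$ is left invariant and homogeneous of degree $2$. The Lie algebra $\mathfrak{g}$ of $\mathscr{H}$ is two-step nilpotent with $\mathfrak{g}_1={\rm span}\{Y_a\}$ and $\mathfrak{g}_2={\rm span}\{\partial_{s_1},\partial_{s_2},\partial_{s_3}\}$, and the bracket relations (\ref{2.16'}) show $[\mathfrak{g}_1,\mathfrak{g}_1]\subset\mathfrak{g}_2$ and $[\mathfrak{g},\mathfrak{g}_2]=0$, as required.

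First I would verify condition (H), which is the substantive point. For $\lambda=(\lambda_1,\lambda_2,\lambda_3)\in\mathfrak{g}_2^*\setminus\{0\}$, the bilinear form $B_\lambda$ on $\mathfrak{g}_1$ is given by $B_\lambda(Y_{4l+k},Y_{4l'+j})=\langle\lambda,[Y_{4l+k},Y_{4l'+j}]\rangle$. By (\ref{2.16'}) this vanishes unless $l=l'$, so $B_\lambda$ is block-diagonal across the $n$ copies indexed by $l$, and on each block it equals $4\sum_{\beta=1}^3\lambda_\beta B^\beta_{kj}$ by (\ref{2.16'}). Thus on each $4$-dimensional block the matrix of $B_\lambda$ is $4(\lambda_1 B^1+\lambda_2 B^2+\lambda_3 B^3)$. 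Using the explicit matrices (\ref{2.14}) one computes the determinant of this $4\times4$ block to be $16|\lambda|^4=16(\lambda_1^2+\lambda_2^2+\lambda_3^2)^2$, which is nonzero whenever $\lambda\neq0$. Hence each block is nondegenerate, and therefore $B_\lambda$ is nondegenerate on all of $\mathfrak{g}_1$ for every nonzero $\lambda$, establishing condition (H).

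Next I would invoke the known $C^\infty$ hypoellipticity of $\Delta_b$. Since the $Y_a$'s satisfy H\"ormander's bracket condition (their first-order brackets (\ref{2.16'}) span the $\partial_{s_\beta}$ directions, so together with the $Y_a$'s they generate all of $\mathfrak{g}$), H\"ormander's theorem gives that $\Delta_b=-\sum_a Y_a^2$ is $C^\infty$ hypoelliptic. With condition (H) verified, Theorem \ref{elliptic} applies and the equivalence $(i)\Leftrightarrow(ii)$ yields that $\Delta_b$ is analytic hypoelliptic. Concretely, this means: if $u\in S'(\Omega)$ and $\Delta_b u$ is real-analytic on $\Omega$, then $u$ is real-analytic on $\Omega$, which is exactly the assertion of the corollary.

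The main obstacle, and the only step requiring genuine computation, is the verification that the determinant of the block $\lambda_1 B^1+\lambda_2 B^2+\lambda_3 B^3$ is a nonvanishing multiple of $|\lambda|^4$; this is where the quaternionic structure encoded in the relations $(B^1)^2=(B^2)^2=(B^3)^2=-\,\mathrm{id}$ and $B^1B^2=B^3$ does the work, since it makes $\lambda_1 B^1+\lambda_2 B^2+\lambda_3 B^3$ behave like (left multiplication by) the imaginary quaternion $\lambda_1\mathbf{i}+\lambda_2\mathbf{j}+\lambda_3\mathbf{k}$ of norm $|\lambda|$, whose squared norm as a real $4\times4$ matrix gives determinant $|\lambda|^4$. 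Everything else is a routine matching of the group data against the hypotheses of M\'etivier's theorem.
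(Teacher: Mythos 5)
Your proposal is correct and follows essentially the same route as the paper: verify condition (H) by computing the block-diagonal matrix of $B_\lambda$ with $4\times4$ blocks $4\sum_\beta\lambda_\beta B^\beta$ of determinant a nonzero multiple of $(\lambda_1^2+\lambda_2^2+\lambda_3^2)^2$, note the $C^\infty$ hypoellipticity of $\Delta_b$ from H\"ormander's condition, and conclude via M\'etivier's Theorem \ref{elliptic}. (The only quibble is the constant in the block determinant, $4^4|\lambda|^4$ rather than $16|\lambda|^4$, which is immaterial to nondegeneracy.)
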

\begin{proof} It follows from the well known subellipticity of $\Delta_b $ that $u$ is locally $C^{k+1}$  if  $\Delta_bu$ is  locally $C^{k }$. So $\Delta_b $ is $C^\infty$ hypoelliptic. To obtain the analytic hypoellipticity of $\Delta_b $ by applying Theorem \ref{elliptic}, it is sufficient to
  check the condition $(H)$ for the right quaternionic Heisenberg group $\mathscr{H}$. In this case $\mathfrak{g}_1={\rm span}\{Y_1,\cdots,Y_{4n}\},$ $\mathfrak{g}_2={\rm span}\left\{ \partial_{s_1}, \partial_{s_2}, \partial_{s_3}\right\},$ where $Y_1,\cdots,Y_{4n}$ is the left invariant vector fields  in (\ref{2.43}). Let $\lambda\in \mathfrak{g}_2^*\setminus\{0\}.$ For  $Y_{4l+j},Y_{4l+j'}\in \mathfrak{g}_1,$ we have
\begin{align*}
B_\lambda(Y_{4l+j},Y_{4l'+j'})=\langle\lambda,[Y_{4l+j},Y_{4l'+j'}]\rangle=4\delta_{ll'}
\sum_{\beta=1}^{3}B_{jj'}^{\beta}\lambda(\partial_{s_{\beta}})=4\delta_{ll'}
\sum_{\beta=1}^{3}B_{jj'}^{\beta}\lambda_{\beta},
\end{align*}
by (\ref{2.16'}), if we write $\lambda(\partial_{s_{\beta}})=\lambda_{\beta} $. Then the matrix associated to $B_\lambda$ is
\begin{align}\label{Blambda} \sum_{\beta=1}^{3}\left(
\begin{array}{ccc}\lambda_{\beta}B^{\beta}&&\\&\ddots&\\
&&\lambda_{\beta}B^{\beta}\end{array}\right),\quad {\rm where}\ \sum_{\beta=1}^{3}\lambda_{\beta}B^{\beta}=\left(
\begin{array}{cccc}0&-\lambda_1&-\lambda_2&-\lambda_3\\ \lambda_1&0&-\lambda_3
&\lambda_2\\\lambda_2&\lambda_3&0&-\lambda_1\\ \lambda_3&-\lambda_2
&\lambda_1&0\end{array}\right),\end{align}
 whose determinant is $ \left(\lambda_1^2+\lambda_2^2+\lambda_3^2\right)^{2n}$ by directly calculation. So $B_\lambda$ is nondegenerate for $\lambda\in \mathfrak{g}_2^*\setminus\{0\},$ i.e. $\mathscr{H}$ satisfies condition $(H).$
\end{proof}
Liouville-type theorems holds for  SubLaplacian $\Delta_b$ on the right quaternionic Heisenberg group by the following general theorem of Geller.
\begin{thm}\label{liou}{\rm(\cite[Theorem 2]{Daryl})}
Let $\mathscr{L}$ be a homogeneous hypoelliptic left invariant differential operator on a homogeneous group $G.$ Suppose $u\in S'(G)$ and $\mathscr{L}u=0.$ Then $u$ is a polynomial.
\end{thm}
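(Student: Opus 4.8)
The plan is to derive the statement from the interplay between the dilation structure of the homogeneous group $G$ and the scale-invariant local estimates furnished by hypoellipticity; this replaces the classical Euclidean argument, where $\widehat{u}$ is supported at the origin because the homogeneous symbol is nonvanishing off $0$, forcing $u$ to be a polynomial. Write $\{\delta_r\}_{r>0}$ for the dilations of $G$, let $\|\cdot\|$ be a homogeneous norm with balls $B_r=\{\,x:\|x\|<r\,\}$, and let $m$ be the homogeneous degree of $\mathscr{L}$, so that $\mathscr{L}(w\circ\delta_r)=r^{m}(\mathscr{L}w)\circ\delta_r$ for every $w$. Since $\mathscr{L}u=0$ is smooth and $\mathscr{L}$ is hypoelliptic, $u$ is automatically $C^\infty$. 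I would then split the proof into a reduction to the polynomial-growth case and a scaling argument for that case.

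For the reduction, fix an approximate identity $\{\psi_\varepsilon\}\subset C_c^\infty(G)$ and consider the mollifications $u*\psi_\varepsilon$. Because $\mathscr{L}$ is left invariant it commutes with right convolution, so $\mathscr{L}(u*\psi_\varepsilon)=(\mathscr{L}u)*\psi_\varepsilon=0$; moreover the convolution of a tempered distribution with a compactly supported smooth function is a smooth function whose growth is controlled by $(1+\|x\|)^{N}$, where $N$ is the fixed order of $u\in S'(G)$ (the constant may depend on $\varepsilon$, but the exponent $N$ does not). Thus each $u*\psi_\varepsilon$ is a smooth $\mathscr{L}$-harmonic function of polynomial growth with a uniform growth exponent, and $u*\psi_\varepsilon\to u$ in $S'(G)$ as $\varepsilon\to0$.

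The core is then the following Liouville statement: if $v\in C^\infty(G)$ satisfies $\mathscr{L}v=0$ and $|v(x)|\le C(1+\|x\|)^{N}$, then $v$ is a group polynomial of homogeneous degree at most $N$. To prove it, observe that any left-invariant vector field commutes with $\mathscr{L}$, so every left-invariant derivative $Y^{I}v$ is again $\mathscr{L}$-harmonic. Hypoellipticity yields an interior estimate $\sup_{B_1}|Y^{I}v|\le C_{I}\sup_{B_2}|v|$, whose constant is invariant under $\delta_r$ precisely because $\mathscr{L}$ is homogeneous. Applying this to the rescaled harmonic function $v\circ\delta_r$ and using $Y^{I}(v\circ\delta_r)=r^{\,d(I)}(Y^{I}v)\circ\delta_r$, where $d(I)$ is the homogeneous degree of $Y^{I}$, together with $\sup_{\delta_rB_2}|v|\le C' r^{N}$ for $r\ge1$, gives $\sup_{\delta_rB_1}|Y^{I}v|\le C'' r^{\,N-d(I)}$. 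For any fixed $x$ and all $r\ge\|x\|$ one then has $|Y^{I}v(x)|\le C'' r^{\,N-d(I)}$, so letting $r\to\infty$ forces $Y^{I}v\equiv0$ whenever $d(I)>N$. A smooth function all of whose left-invariant derivatives of homogeneous degree exceeding $N$ vanish is a group polynomial of degree at most $N$, which proves the statement. Since polynomials of bounded degree form a finite-dimensional, hence closed, subspace of $S'(G)$, the $S'$-limit $u$ of the polynomials $u*\psi_\varepsilon$ is itself a polynomial.

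I expect the main obstacle to be the scale-invariant interior estimate $\sup_{B_1}|Y^{I}v|\le C_{I}\sup_{B_2}|v|$ with a constant independent of the scale. This requires upgrading hypoellipticity to a quantitative a priori subelliptic estimate of the form $\|v\|_{H^{s+\sigma}(B_1)}\le C(\|\mathscr{L}v\|_{H^{s}(B_2)}+\|v\|_{H^{s}(B_2)})$ with a fixed subelliptic gain $\sigma>0$, iterating it across nested balls, passing to sup-norms of derivatives via the subelliptic Sobolev embedding, and checking that homogeneity of $\mathscr{L}$ renders every constant invariant under $\delta_r$. By contrast, the two auxiliary facts used above, namely the polynomial-growth bound for $u*\psi_\varepsilon$ and the characterization of group polynomials by the finite-order vanishing of left-invariant derivatives, are standard on homogeneous groups.
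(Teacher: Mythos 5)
This theorem is quoted from Geller \cite[Theorem 2]{Daryl}; the paper supplies no proof of its own, so your argument can only be measured against the cited source. Your overall strategy --- mollify to reduce to a smooth $\mathscr{L}$-harmonic function of polynomial growth, kill all left-invariant derivatives of homogeneous degree exceeding the growth exponent by a scaling argument based on a scale-invariant interior estimate, identify the result as a group polynomial, and pass to the limit in the finite-dimensional (hence closed) space of polynomials of bounded degree --- is a legitimate, essentially complete real-variable proof, different in flavour from Geller's argument. Two steps need repair.

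First, the convolution is on the wrong side. For a \emph{left}-invariant operator one has $\mathscr{L}(f\ast g)=f\ast\mathscr{L}g$: the operator falls on the \emph{second} factor, because $z\mapsto g(y^{-1}z)$ is a left translate of $g$. With your choice, $\mathscr{L}(u\ast\psi_\varepsilon)=u\ast\mathscr{L}\psi_\varepsilon$, which has no reason to vanish; left-invariant operators do not commute with right translations on a noncommutative group. You must mollify on the left: $\mathscr{L}(\psi_\varepsilon\ast u)=\psi_\varepsilon\ast\mathscr{L}u=0$, and $\psi_\varepsilon\ast u$ is still a smooth function whose polynomial growth exponent is controlled by the order of $u$ and the polynomial group law, uniformly in $\varepsilon$. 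With this change the reduction goes through.

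Second, you propose to derive the interior estimate $\sup_{B_1}|Y^{I}v|\le C_{I}\sup_{B_2}|v|$ from a quantitative subelliptic a priori estimate with a fixed gain $\sigma>0$. Hypoellipticity alone does not furnish such an estimate, so as written this step is not justified by the hypotheses of the theorem. It is also unnecessary: for any hypoelliptic operator, the solution space $\{v\in\mathcal{D}'(B_2):\mathscr{L}v=0\}$ is contained in $C^{\infty}(B_2)$ and a closed-graph/Montel argument shows that a set of solutions bounded in $L^{\infty}(B_2)$ is bounded in $C^{\infty}$ on compact subsets of $B_2$; this is exactly the estimate you need, and its constant transfers to all scales because homogeneity of $\mathscr{L}$ makes $v\circ\delta_r$ again a solution of the same equation on the fixed ball. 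The remaining ingredients --- the identity $Y^{I}(v\circ\delta_r)=r^{d(I)}(Y^{I}v)\circ\delta_r$, the vanishing of $Y^{I}v$ for $d(I)>N$, the characterization of group polynomials by finite-order vanishing of left-invariant derivatives, and the closedness in $S'(G)$ of the polynomials of bounded degree --- are correct as you state them.
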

\begin{thm}\label{kcfe}
Let $\widetilde{\Omega}$ be an open set in $\mathscr{F}$  such that $\widetilde{{\Omega}}\Subset\mathring{\mathscr{F}}$ and $\mathscr{F}\setminus\widetilde{\Omega}$ is connected.  If $f\in C^1(\mathscr{H}/\mathscr{H}_{\mathbb{Z}},\mathscr{V}_1)$ with ${\rm supp}f\subset\widetilde{\Omega}$  is $\mathscr{D}_1$-closed and $f\perp$ {constant} vectors, then there exist $u\in C^2\left(\mathscr{H}/\mathscr{H}_{\mathbb{Z}},\mathscr{V}_0\right)$ such that
\begin{equation} \label{d0u}
\mathscr{D}_0u=f,
\end{equation}
with ${\rm supp}\,u\subset\widetilde{\Omega}.$
\end{thm}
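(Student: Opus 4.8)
The plan is to produce the desired solution by upgrading the canonical $L^2$ solution of Theorem \ref{t1}. Viewing $f$ as a periodic $\mathscr{V}_1$-valued function on $\mathscr{H}$ supported in the disjoint translates $\tau_{(n,m)}\widetilde{\Omega}$, the hypotheses $\mathscr{D}_1f=0$ and $f\perp$ constants let me apply Theorem \ref{t1} to obtain $u=\mathscr{D}_0^*Nf\in L^2(\mathscr{H}/\mathscr{H}_{\mathbb{Z}},\mathscr{V}_0)$ with $\mathscr{D}_0u=f$ and $u\perp$ constants. Two things then remain: that $u$ is in fact $C^2$, and that ${\rm supp}\,u\subset\widetilde{\Omega}$. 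The support statement is the heart of the matter, and it is where the higher-dimensional hypothesis is genuinely exploited.

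First I would settle regularity. Applying $\mathscr{D}_0^*$ to $\mathscr{D}_0u=f$ and using Proposition \ref{DD}, which identifies $\mathscr{D}_0^*\mathscr{D}_0$ with $\Delta_b$ on $\mathscr{V}_0$-valued functions, gives, interpreted distributionally,
\begin{equation*}
\Delta_bu=\mathscr{D}_0^*\mathscr{D}_0u=\mathscr{D}_0^*f.
\end{equation*}
Since $f\in C^1$, the right-hand side lies in $C^0$, so the well-known subellipticity of $\Delta_b$ yields $u\in C^2$ after the usual bootstrap. Moreover, on the open set $\mathscr{H}\setminus{\rm supp}\,f$ the first-order operator $\mathscr{D}_0^*$ annihilates $f$, so $\Delta_bu=0$ there, and Corollary \ref{anal} (analytic hypoellipticity of $\Delta_b$) shows that $u$ is real-analytic on $\mathscr{H}\setminus\bigcup_{(n,m)}\tau_{(n,m)}\widetilde{\Omega}$, equivalently on the connected open set $\pi(\mathscr{F}\setminus\widetilde{\Omega})$ in the quotient.

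It remains to prove $u\equiv0$ on $\mathscr{F}\setminus\widetilde{\Omega}$. Since $u$ is real-analytic on the \emph{connected} set $\pi(\mathscr{F}\setminus\widetilde{\Omega})$, it suffices to show that $u$ vanishes on some nonempty open subset; the identity theorem for real-analytic functions then propagates the vanishing throughout $\pi(\mathscr{F}\setminus\widetilde{\Omega})$, and restricting the resulting periodic function to the fundamental domain produces a $C^2$ solution with ${\rm supp}\,u\subset\widetilde{\Omega}$ and $\mathscr{D}_0u=f$, as required. To obtain this seed I would work on the non-compact group, where $u$ lifts to a periodic, hence bounded and tempered, function that is $k$-CF and therefore $\Delta_b$-harmonic off the blobs. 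Here the high homogeneous dimension $Q=4n+6$ (the source of the restriction ${\rm dim}\,\mathscr{H}\ge19$) enters decisively: it forces the fundamental solution of $\Delta_b$ to decay at infinity, and the Liouville-type Theorem \ref{liou} constrains any globally $\Delta_b$-harmonic tempered function to be a polynomial—hence, by periodicity, a constant, which is killed by $u\perp$ constants.

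The main obstacle is precisely this seed vanishing: bridging the canonical solution on the compact quotient with the decay/Liouville mechanism on the non-compact group, so as to pin down an open set on which $u$ vanishes. This is the genuinely dimension-dependent step and the point at which the argument departs from the elliptic Bochner–Hartogs mechanism, where vanishing at infinity is automatic. Once that vanishing is in hand, the remaining pieces—existence through Theorem \ref{t1}, the identity $\Delta_bu=\mathscr{D}_0^*f$, the real-analytic continuation of Corollary \ref{anal}, and the final restriction to $\mathscr{F}$—are routine.
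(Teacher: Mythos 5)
Your setup is correct and matches the paper: you invoke Theorem \ref{t1} for existence, obtain $\Delta_b u=\mathscr{D}_0^*f$ and hence $u\in C^2$ by subellipticity, and get real-analyticity of $u$ off ${\rm supp}\,f$ from Corollary \ref{anal}, reducing everything to finding one nonempty open set on which $u$ vanishes (after subtracting a constant) so that the identity theorem propagates the vanishing through the connected set $\mathscr{F}\setminus\widetilde{\Omega}$. But the proposal stops exactly at that point: you name the seed vanishing as ``the main obstacle'' without supplying an argument, and the mechanism you sketch for it would not work. The periodic lift of $u$ to $\mathscr{H}$ is $\Delta_b$-harmonic only off the infinite periodic array $\bigcup_{(n,m)}\tau_{(n,m)}\widetilde{\Omega}$, not globally, so Theorem \ref{liou} cannot be applied to $u$ on $\mathscr{H}$; and since $u$ is constructed by the abstract $L^2$ method on the compact quotient rather than by convolution with a fundamental solution, there is no decay at infinity to exploit --- indeed a nonzero periodic function never decays. (Relatedly, the restriction ${\rm dim}\,\mathscr{H}\geq 19$ comes from the coefficient $1-\tfrac{3}{n}$ in the $L^2$ estimate (\ref{3.22}), not from decay properties of the fundamental solution of $\Delta_b$.)

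The missing idea in the paper is a \emph{slicing} argument. Because $\widetilde{\Omega}\Subset\mathring{\mathscr{F}}$, the lower-dimensional subgroups $\mathscr{H}_c'=\{(q',c,s)\}$ for $|c|$ small, and $\mathscr{H}_0''=\{(0,q_n,s)\}$, avoid every translate of $\widetilde{\Omega}$. On such a slice the relevant components of $\mathscr{D}_0u$ vanish identically, so by Proposition \ref{DD} applied to the slice, the restriction of $u$ is \emph{globally} harmonic for the slice's own SubLaplacian $\Delta_b'$; it is also periodic, hence bounded and tempered, so Geller's Liouville theorem \ref{liou} applies \emph{on the slice} and forces the restriction to be a polynomial, hence a constant. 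Combining the two families of slices shows $u$ equals a single constant on an open neighborhood, and subtracting that constant produces the seed you were missing. Without this (or an equivalent device), the support statement --- which is the whole content of the theorem beyond Theorem \ref{t1} --- is not established.
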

\begin{proof}
By Theorem \ref{t1}, we can find a solution $u\in L^2\left(\mathscr{H}/\mathscr{H}_{\mathbb{Z}},\mathscr{V}_0\right)$ to (\ref{d0u}).   For $c\in  \mathbb{{H}}$, denote the subgroup
$$\mathscr{H}_c':=\{(q',c,s)\in\mathscr{H}:q'\in
\mathbb{H}^{n-1},s\in\mathbb{R}^3\}.$$ We see that $\mathscr{H}'_c\cap\Omega=\emptyset$ for $|c|$ small by $\widetilde{\Omega}\Subset\mathring{\mathscr{F}}.$

Since $\mathscr{D}_0u=0$ on $\left(\mathscr{H}/\mathscr{H}_{\mathbb{Z}}\right)\setminus\widetilde{\Omega},$  we have $\mathscr{D}_0^*\mathscr{D}_0u=0,$ and then by Proposition \ref{DD}, $\Delta_bu_{A'_1\cdots A'_{k }A}$ $=0$ on $\left(\mathscr{H}/\mathscr{H}_{\mathbb{Z}}\right)\setminus\widetilde{\Omega}$ in the  sense of distributions for any fixed $A'_1,\cdots ,A'_{k },A$. So it is real analytic on $\left(\mathscr{H}/\mathscr{H}_{\mathbb{Z}}\right)\setminus\widetilde{\Omega}$ by Corollary \ref{anal}. Moreover, $u$ is $C^2$ on $\mathscr{H}/\mathscr{H}_{\mathbb{Z}}$ by subellipticity of $\Delta_b $. In particular $u(q',c,s)$ is well-defined on $\mathscr{H}'_c/\mathscr{H}'_{\mathbb{Z}}$ as a real analytic function. So it can be extended to a periodic function over $\mathscr{H}'_c$ by (\ref{3.30}).
Now let $\mathscr{D}_0'$ be the tangential $k$-Cauchy-Fueter operator on $\mathscr{H}_c',$ i.e. $\mathscr{D}_0'u$ is a $\odot^{k-1}\mathbb{C}^2\otimes\mathbb{C}^{2n-2}$-valued function with
  \begin{equation*}
     \left(\mathscr{D}_0' u\right)_{AA_2'\cdots A_{k}'}= (\mathscr{D}_0 u )_{AA_2'\cdots A_{k}'} , \qquad A=0,1,\cdots,2n-3.
  \end{equation*}
   By applying Proposition \ref{DD} to $\mathscr{H}_c',$ we see that    $\Delta_b'u=0,$ where $\Delta_b'=-\sum_{a=0}^{4n-5}Y_a^2.$
Then apply  Liouville-type Theorem \ref{liou} to the group $\mathscr{H'}_c$ and $\Delta_b'$  to get
$$u(\cdot,c,\cdot)= {\rm a\ polynomial}\ {\rm on}\ \mathscr{H}'_c,$$ which must be a constant by periodicity. Thus $u$ only depends on the variable $q_n.$

Similarly, we can prove $u$ is a constant on the subgroup
  \begin{equation*}
    \mathscr{H}_0'':=\{(0,q_n,s)\in\mathscr{H};q_n\in
\mathbb{H},s\in\mathbb{R}^3\}. \end{equation*}
Now if replacing $u$ by  $u-$  const., we see that $u$ vanishes in a neighborhood of  $\mathscr{H}_0''.$   Consequently by the identity theorem for real analytic functions  it  vanishes on the connected component   $\mathscr{F}\setminus\widetilde{{\Omega}}.$ Thus ${\rm supp}\,u\subset\widetilde{\Omega}.$
\end{proof}
The solution with supp$\,u\subset\widetilde{\Omega}$ above plays the role of compactly supported solution to $\overline{\partial}$ equation or the tangential  $k$-Cauchy-Fueter equations (cf. e.g. \cite{Homander,wang15}). It leads to  Hartogs'
extension phenomenon as  follows.
\vskip 5mm
{\it The proof of Theorem \ref{hartogs}.}
Without loss of generality, we can assume $\Omega\Subset\mathring{\mathscr{F}}$ by dilating if necessary.
Let $\chi\in C_0^{\infty}(\Omega)$ be equal to $1$ in a neighborhood of $K$ such that $\mathscr{F}\setminus{\rm supp}\,\chi$ is connected. Set
 \begin{equation*}
    \widetilde{u}(\xi):=\left\{\begin{array}{lll}&(1-\chi)u(\xi),\quad &\xi\in\Omega\setminus K\\&0,\quad &\xi\in K\end{array}
\right..
 \end{equation*}
Then $\widetilde{u}\in C^{\infty}(\Omega),$ and $\widetilde{u}|_{\Omega\setminus {\rm supp}\,{\chi}}=u|_{\Omega\setminus {\rm supp}\,\chi}.$ We have
 \begin{equation*}
    \mathscr{D}_0\tilde{u}=\mathscr{D}_0
((1-\chi) {u})=:f
 \end{equation*}
on $\mathscr{H},$ where  $f_{A_2'\cdots A_k'A}=-\sum_{A_1'}Z_A^{A_1'}\chi\cdot u_{A_1'\cdots A_k'}$ by $\mathscr{D}_0u=0$ on $\Omega\setminus K.$ Hence $f\in C_0^\infty(\mathscr{H},\mathscr{V}_1)$ vanishes in $K$ and outside $\Omega,$  satisfying $\mathscr{D}_1f=\mathscr{D}_1\mathscr{D}_0\tilde{u}=0$ by (\ref{d10}). We can extend $f$ to a periodic function and   view it as an element of $C^\infty(\mathscr{H}/\mathscr{H}_\mathbb{Z},\mathscr{V}_1).$

Denote
\begin{equation*}
   c:=\frac {\int_{\mathscr{H}/\mathscr{H}_\mathbb{Z}}f\hbox{d}V}{
\int_{\mathscr{H}/\mathscr{H}_\mathbb{Z}}\hbox{d}V}\in\mathscr{V}_1.
\end{equation*}
Then we have $(f-c) \perp\,$constant vectors.
It follows from Theorem \ref{kcfe} that there exists   a solution $\widetilde{U}\in C^2(\mathscr{H}/\mathscr{H}_\mathbb{Z},\mathscr{V}_0)$ to $\mathscr{D}_0\widetilde{U}=f-c,$ which vanishes outside $\widetilde{\Omega}:={\rm supp}\,\chi$. Then $\mathscr{D}_0(\widetilde{u}-\widetilde{U})=c$ on $\mathscr{H}/\mathscr{H}_\mathbb{Z}.$    So $c=\mathscr{D}_0\widetilde{u}|_{\Omega\setminus \widetilde{\Omega}}=\mathscr{D}_0 {u}|_{\Omega\setminus \widetilde{\Omega}}=0.$
Therefore $
U=\widetilde{u}-\widetilde{U}
$
is $k$-CF in $\Omega$ since $\mathscr{D}_0(\widetilde{u}-\widetilde{U})=0.$
Note that $\widetilde{U}\equiv0$ outside   $\widetilde{\Omega}$ and $ \mathscr{F} \setminus \widetilde{\Omega}$ is connected. So $U=u$ in $\Omega\setminus \widetilde{\Omega}.$ Then $U=u$ in $\Omega\setminus K$ by the identity theorem for real analytic functions. The theorem is proved.

\section{The restriction of the  $k$-Cauchy-Fueter operator to the hypersurface $\mathcal{S}$}
\subsection{The nilpotent Lie groups of step two associated to   quadratic hypersurfaces}Let $(x_1,\cdots,x_{4n},$ $t_1,t_2,t_3)$ be   coordinates of $\mathbb{R}^{4n+3}.$
Now consider general quadratic hypersurfaces $\widehat{\mathcal{S}}$ defined by
\begin{equation*}
    \rho={\rm Re}q_{n+1}-\phi(q'),\qquad {\rm where}\quad  \phi=\sum_{k=1}^{4n}\mathbb S_{jk}x_jx_k,
\end{equation*}
for some symmetric matrix $\mathbb S.$ Define the projection:
\begin{equation}\begin{aligned}\label{pi}
\pi:\qquad\qquad\widehat{\mathcal{S}}\qquad\qquad&\longrightarrow\mathbb{H}^n
\times{\rm Im}\,\mathbb{H}\simeq\mathbb{R}^{4n+3},\\(q_1,\cdots,q_n,
\phi(q')+\mathbf{t})&\longmapsto
(q_1,\cdots,q_n,\mathbf{t}),
\end{aligned}\end{equation}
where $\mathbf{t}=t_1\textbf{i}+t_2\textbf{j}+t_3\textbf{k},$ $q_{l+1}=x_{4l+1}+\textbf{i}x_{4l+2}+\textbf{j}x_{4l+3}+\textbf{k}x_{4l+4},$  $l=0,\cdots,n-1$ and $t_\beta=x_{4n+1+\beta}$ for $\beta=1,2,3.$ Let $\psi:\mathbb{H}^{n}\times{\rm Im}\,\mathbb{H}\longrightarrow\mathcal{S}\in\mathbb{H}^{n+1}$ be its inverse.     The Cauchy-Fueter operator is $$\overline{\partial}_{q_{l+1}}=\partial_{x_{4l+1}}+\textbf{i}
\partial_{x_{4l+2}}+\textbf{j}\partial_{x_{4l+3}}+\textbf{k}
\partial_{x_{4l+4}}.$$
Then $\overline{\partial}_{q_{l+1}}+\overline{\partial}_{q_{l+1}}\phi\cdot
\overline{\partial}_{q_{n+1}}$ is a vector field tangential to the hypersurface $\widehat{\mathcal{S}},$ since   $$\left(\overline{\partial}_{q_{l+1}}+\overline{\partial}_{q_{l+1}}\phi\cdot
\overline{\partial}_{q_{n+1}}\right)\rho=0,$$ $l=0,1,\cdots,n-1.$  This vector field is exactly the pushforward vector field $\psi_*\left(\overline{\partial}_{q_{l+1}}+\overline{\partial}_{q_{l+1}}\phi\cdot
\overline{\partial}_{\mathbf{t}}\right)$, where $\overline{\partial}_{\mathbf{t}}=\textbf{i}{\partial}_{t_{1}}+
\textbf{j}{\partial}_{t_{2}}+\textbf{k}
{\partial}_{t_{3}}.$
Because
$$\psi_*\partial_{t_{\beta}}=\partial_{x_{4n+1+\beta}},\quad \psi_*\partial_{x_{4l+j}}=\partial_{x_{4l+j}}+\partial_{x_{4l+j}}\phi\cdot
\partial_{x_{4n+1}},$$
for $\beta=1,2,3,j=1,\cdots,4,l=0,\cdots,n-1 $, and
\begin{equation*}\begin{aligned}
\psi_*\left(\overline{\partial}_{q_{l+1}}+\overline{\partial}_{q_{l+1}}\phi\cdot
\overline{\partial}_{\mathbf{t}}\right)=&\sum_{j=1}^4\textbf{i}_{j-1}\left(
{\partial}_{x_{4l+j}}+{\partial}_{x_{4l+j}}\phi\cdot
{\partial}_{x_{4n+1}}\right)\\&+
\overline{\partial}_{q_{l+1}}\phi\left(\textbf{i}{\partial}_{x_{4n+2}}+
\textbf{j}{\partial}_{x_{4n+3}}+\textbf{k}
{\partial}_{x_{4n+4}}\right)=
\overline{\partial}_{q_{l+1}}+\overline{\partial}_{q_{l+1}}\phi\cdot
\overline{\partial}_{q_{n+1}}
,
\end{aligned}\end{equation*}
Denote
\begin{equation}\label{eq:Xj}
   X_{4l+1}+\mathbf{i}X_{4l+2}+\mathbf{j}X_{4l+3}+\mathbf{k}X_{4l+4}:=\overline{\partial}_{q_{l+1}}+\overline{\partial}_{q_{l+1}}\phi\cdot
\overline{\partial}_{\mathbf{t}}.
\end{equation}
\begin{prop} \label{prop:X} We have
\begin{align*}
X_{b}=\partial_{x_{b}}+2\sum_{\beta=1}^3\sum_{a=1}^{4n}\left(\mathbb S\mathbb I^\beta\right)_{ab}x_a\partial_{t_\beta},
\end{align*}
where $\mathbb I^\beta$ is the $(4n)\times(4n)$ matrix ${\rm diag}\left(I^\beta,\cdots,I^\beta\right).$

\end{prop}
\begin{proof}
The proof is similar to that of Proposition 2.1 in \cite{wang2}. Consider right multiplication by  $\textbf{i}_\beta $. Noting  that
\begin{equation*}\begin{aligned}
(x_1+x_2\textbf{i}+x_3\textbf{j}+x_4\textbf{k})\textbf{i}
=-x_2+x_1\textbf{i}+x_4\textbf{j}-x_3\textbf{k},\\
(x_1+x_2\textbf{i}+x_3\textbf{j}+x_4\textbf{k})\textbf{j}
=-x_3-x_4\textbf{i}+x_1\textbf{j}+x_2\textbf{k},\\
(x_1+x_2\textbf{i}+x_3\textbf{j}+x_4\textbf{k})\textbf{k}
=-x_4+x_3\textbf{i}-x_2\textbf{j}+x_1\textbf{k},
\end{aligned}\end{equation*}
we can write
\begin{align}\label{2.8}
(x_1+x_2\textbf{i}+x_3\textbf{j}+x_4\textbf{k})\textbf{i}_\beta=-(I^\beta x)_1-(I^\beta x)_2\textbf{i}-(I^\beta x)_3\textbf{j}-(I^\beta x)_4\textbf{k}=-\sum_{j=1}^4(I^\beta x)_j\textbf{i}_{j-1},
\end{align}
where $I^\beta$'s are given by (\ref{I}).
$B^\beta$ in (\ref{2.14}) is the matrix  associated to left multiplication by  $\mathbf{i}_\beta$ (\cite[pp. 1358]{wang2}). Then we have \begin{equation*}\begin{aligned}
\overline{\partial}_{q_{l+1}}\phi\cdot\partial_\mathbf t=&\left(\partial_{x_{4l+1}}\phi+\textbf{i}\partial_{x_{4l+2}}\phi+
\textbf{j}\partial_{x_{4l+3}}\phi+\textbf{k}\partial_{x_{4l+4}}\phi\right)
\left(
\textbf{i}\partial_{t_{1}}+
\textbf{j}\partial_{t_{2}}+\textbf{k}\partial_{t_{3}}\right)\\=&
-\sum_{\beta=1}^3
\sum_{j,k=1}^{4}I_{jk}^\beta\partial_{x_{4l+k}}\phi\textbf{i}_{j-1}
\partial_{t_{\beta}}.
\end{aligned}\end{equation*}
Substitute it into (\ref{eq:Xj}) to get
$$X_{4l+j}=\partial_{x_{4l+j}}+2\sum_{\beta=1}^3\sum_{k=1}^{4}\sum_{a=1}^{4n}I_{kj}^\beta \mathbb S_{a(4l+k)}x_a\partial_{t_\beta}=\partial_{x_{4l+j}}+2\sum_{\beta=1}^3\sum_{a=1}^{4n}
\left(\mathbb S\mathbb I^\beta\right)_{a(4l+j)}x_a\partial_{t_\beta},$$
by the anti-symmetry of $I^\beta.$
 \end{proof}
By Proposition \ref{prop:X} we get
\begin{equation*}
   [X_{a},X_{b}]=2\sum_{\beta=1}^{3}\left(\left(\mathbb{S}\mathbb I^\beta\right)_{ab}-\left(\mathbb{S}\mathbb I^\beta\right)_{ba}\right)\partial_{t_\beta}.
\end{equation*}
So ${\rm span}_\mathbb{C}\left\{X_1,\cdots,X_{4n},\partial_{t_1},
\partial_{t_2},\partial_{t_3}\right\}$ is a nilpotent Lie algebra with center ${\rm span}_\mathbb{C}\left\{\partial_{t_1},
\partial_{t_2},\partial_{t_3}\right\}.$ The corresponding nilpotent Lie groups of step two is the group associated to the  quadratic hypersurface $\widehat{\mathcal{S}}$.

So if we choose the matrix $\mathbb S$ so that
 \begin{align*}
 \mathbb{S}\mathbb I^\beta-\mathbb I^\beta\mathbb{S}=2\mathbb B^\beta,
 \end{align*}
 where $\mathbb B^\beta={\rm diag}\left(B^\beta,\cdots,B^\beta\right),$ then the Lie algebra spanned by $X_1,\cdots,X_{4n},\partial_{t_1},\partial_{t_1},\partial_{t_3}$ is  isomorphic to the Lie algebra of the right quaternionic   Heisenberg group $\mathscr H.$ It is sufficient to choose $\mathbb{S}={\rm diag}(S,\cdots,S)$ such that $SI^\beta-I^\beta S=2B^\beta,$ where $S$ is a symmetric $4\times4$ matrix. Namely \begin{align}\label{CS}
 C^\beta-\left(C^\beta\right)^t=2B^\beta,
 \end{align} for $C^\beta=SI^\beta.$
 Then $S={\rm diag}(-3,1,1,1,),$ i.e.
\begin{equation*}
\begin{aligned}
C^{1}:=\left(\begin{array}{cccc} 0 & -3 & 0 &0\\ -1& 0& 0& 0\\ 0& 0&0& -1\\0 &0& 1 &0\end{array}\right),    C^{2}:=\left(\begin{array}{cccc} 0 & 0 &
-3 &0\\ 0& 0& 0& 1\\ -1& 0&0& 0\\0 &-1& 0 &0\end{array}\right),
C^{3}:=\left(\begin{array}{cccc} 0 & 0 & 0 &-3\\ 0& 0& -1& 0\\ 0& 1&0& 0\\-1 &0& 0 &0\end{array}\right).
\end{aligned}
\end{equation*}
satisfy (\ref{CS}). Thus $\widehat{\mathcal S}$ in this case is the hypersurface $\mathcal{S}$, and  the Lie group associate to $\mathcal S$ is the right quaternionic  Heisenberg group.
\subsection{The restriction of the $k$-Cauchy-Fueter operator.} $X_a$'s for $\mathcal{S}$ has the form
\begin{equation*}
   X_{4l+j}= \partial_{x_{4l+j}}+2\sum_{\beta=1}^3\sum_{k=1}^{4 }
C_{kj}x_k\partial_{t_\beta}.
\end{equation*}
Since $C^\beta$ is not anti-symmetric, they are different from the standard  left invariant vector fields (\ref{2.43}) on  $\mathscr{H} $.
 It is standard  that they can be transformed to the standard  left invariant vector fields (\ref{2.43}) on  $\mathscr{H} $ by a simple coordinate transformation  $\mathcal{F}:\mathscr{H}\rightarrow \mathbb{R}^{4n+3},(y,s)\mapsto(x,t)$ given by
\begin{align}\label{F}
x_{4l+j}=y_{4l+j},\quad t_\beta=s_\beta+\sum_{k,j=1}^4D^\beta_{kj}y_{4l+k}y_{4l+j},
\end{align}
(cf. \cite[(1.8)]{wang2}) with
$
D^\beta:=C^\beta+\left(C^\beta\right)^t
$
symmetric. It is direct to see that
\begin{equation*}
   \mathcal{F}_*\partial_{s_\beta}=\partial_{t_\beta}\qquad {\rm and} \qquad   \mathcal{F}_*Y_{4l+j}=X_{4l+j},
\end{equation*}
where $Y_{4l+j}$ is given by (\ref{2.43}).
 Then we find the relationship between complex horizontal vector fields  $Z_A^{A'}$'s  on $\mathscr {H} $ and $\nabla_A^{A'}$'s on $ \mathbb H^{n+1}$.
\begin{prop}\label{p2.3} Under the diffeomorphism  $\psi\circ\mathcal{F}:\mathscr{H}\rightarrow \mathcal{S}$, we have
\begin{align}\label{2.18}
\left(\psi\circ\mathcal{F}\right)_*Z_A^{A'}
=\nabla_A^{A'}+\sum_{\alpha=0,1}C_A^{\alpha}\nabla_{(2n+\alpha)}^{A'},\quad {\rm for}\ \left(C_A^{\alpha}\right):=\left(\begin{smallmatrix}\vdots\\
\tau \left(\overline{\partial}_{q_l}\phi\right) \\
\vdots\end{smallmatrix}\right),
\end{align}for fixed $A=0,1,\cdots,2n-1,A'=0',1' $,
  where $\tau$ is the embedding given by   (\ref{tau}).
\end{prop}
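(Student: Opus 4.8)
The plan is to compute the pushforward blockwise, exploiting the fact that both $(Z_{AA'})$ in \eqref{ZAA} and $(\nabla_{AA'})$ in \eqref{nabla} are obtained by applying the embedding $\tau$ of \eqref{tau} to quaternionic rows of vector fields. First I would observe that for the $l$-th block (rows $A=2l,2l+1$) the matrix $(Z_{AA'})$ coincides with $\tau(\mathbf{Y}_l)$, where $\mathbf{Y}_l:=Y_{4l+1}+\mathbf{i}Y_{4l+2}+\mathbf{j}Y_{4l+3}+\mathbf{k}Y_{4l+4}$ is the formal quaternionic combination of the left invariant fields; likewise $(\nabla_{AA'})$ restricted to the $m$-th block equals $\tau(\overline{\partial}_{q_{m+1}})$. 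Since the index raising $Z_A^{A'}=\sum_{B'}Z_{AB'}\varepsilon^{B'A'}$ from \eqref{epsilon} is one and the same $\mathbb{C}$-linear operation on both sides of the claimed identity, it suffices to prove \eqref{2.18} for the lowered-index fields $Z_{AA'}$ and then raise the primed index at the very end.

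Next I would transport the quaternionic combination itself. Because pushforward is linear and acts only on the vector-field part while leaving the quaternion units $\mathbf{i}_{j-1}$ untouched, and because $\mathcal{F}_*Y_{4l+j}=X_{4l+j}$, one has $\mathcal{F}_*\mathbf{Y}_l=X_{4l+1}+\mathbf{i}X_{4l+2}+\mathbf{j}X_{4l+3}+\mathbf{k}X_{4l+4}$. Applying $\psi_*$ and invoking \eqref{eq:Xj} then yields the single quaternionic identity
$$(\psi\circ\mathcal{F})_*\mathbf{Y}_l=\overline{\partial}_{q_{l+1}}+\overline{\partial}_{q_{l+1}}\phi\cdot\overline{\partial}_{q_{n+1}}$$
of vector fields along $\mathcal{S}$.

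Then I would apply $\tau$ to both sides. The left-hand side becomes $\tau\bigl((\psi\circ\mathcal{F})_*\mathbf{Y}_l\bigr)=(\psi\circ\mathcal{F})_*\tau(\mathbf{Y}_l)=(\psi\circ\mathcal{F})_*(Z_{AA'})$ on the $l$-th block, since $\tau$ merely arranges the pushed-forward components into a $2\times2$ matrix and so commutes with the pushforward. For the right-hand side I would use that $\tau$ is additive and, crucially, multiplicative (as one checks from $\tau(\mathbf{i})\tau(\mathbf{j})=\tau(\mathbf{k})$ and the analogous relations), so that $\tau(ab)=\tau(a)\tau(b)$ for any quaternion $a$ and quaternionic field $b$. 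Hence
$$\tau\left(\overline{\partial}_{q_{l+1}}+\overline{\partial}_{q_{l+1}}\phi\cdot\overline{\partial}_{q_{n+1}}\right)=\tau(\overline{\partial}_{q_{l+1}})+\tau(\overline{\partial}_{q_{l+1}}\phi)\,\tau(\overline{\partial}_{q_{n+1}}).$$
The first summand is precisely the $l$-block of $(\nabla_{AA'})$, while in the second summand $\tau(\overline{\partial}_{q_{l+1}}\phi)$ is the scalar matrix whose entries are the $C_A^{\alpha}$ and $\tau(\overline{\partial}_{q_{n+1}})$ is the $(n+1)$-block $(\nabla_{(2n+\alpha)A'})$; reading off the $(A,A')$ entry of the matrix product gives $\sum_{\alpha}C_A^{\alpha}\nabla_{(2n+\alpha)A'}$. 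Equating entries yields $(\psi\circ\mathcal{F})_*Z_{AA'}=\nabla_{AA'}+\sum_{\alpha}C_A^{\alpha}\nabla_{(2n+\alpha)A'}$, and raising $A'$ with $\varepsilon^{B'A'}$ produces \eqref{2.18}.

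The only genuinely delicate point, and the one I would verify most carefully, is the interaction of $\tau$ with the quaternionic product $\overline{\partial}_{q_{l+1}}\phi\cdot\overline{\partial}_{q_{n+1}}$: one must confirm that left multiplication by the quaternion-valued function $\overline{\partial}_{q_{l+1}}\phi$ corresponds under $\tau$ to left multiplication by the matrix $\tau(\overline{\partial}_{q_{l+1}}\phi)$, so that the factor order in $\tau(\overline{\partial}_{q_{l+1}}\phi)\,\tau(\overline{\partial}_{q_{n+1}})$ is the correct one and matches the index placement defining $C_A^{\alpha}$ in \eqref{2.18}. Everything else is routine bookkeeping of the block and primed indices.
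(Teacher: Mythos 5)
Your proposal is correct and follows essentially the same route as the paper: both identify each $2\times 2$ block of $(Z_{AA'})$ and $(\nabla_{AA'})$ as $\tau$ applied to a quaternionic combination of vector fields, push forward via $\mathcal{F}_*Y_{4l+j}=X_{4l+j}$ and \eqref{eq:Xj}, and then use that $\tau$ is a quaternion-algebra homomorphism to split $\tau\bigl(\overline{\partial}_{q_{l+1}}+\overline{\partial}_{q_{l+1}}\phi\cdot\overline{\partial}_{q_{n+1}}\bigr)$ into the $\nabla$-block plus $\tau(\overline{\partial}_{q_{l+1}}\phi)$ times the last block. The only cosmetic difference is that you work with lowered primed indices and apply $\varepsilon^{B'A'}$ at the end, whereas the paper carries the factor $\varepsilon$ through the computation from the start; the delicate point you flag (factor order under $\tau$) is exactly what the paper settles by citing that $\tau$ is a representation.
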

\begin{proof}
As $\tau $ is a representation,   we have
\begin{equation*}\begin{aligned}
\psi_*&\left(\begin{array}{ll}-X_{4l+3}-\mathbf{i}X_{4l+4} &-X_{4l+1}-\mathbf{i}X_{4l+2} \\\ \ X_{4l+1}-\mathbf{i}X_{4l+2} & -X_{4l+3}+\mathbf{i}X_{4l+4}\  \end{array}\right)=\psi_* \left(\begin{array}{rr}X_{4l+1}+\mathbf{i}X_{4l+2} & -X_{4l+3}-\mathbf{i}X_{4l+4} \\ X_{4l+3}- \mathbf{i}X_{4l+4}&   X_{4l+1}-\mathbf{i}X_{4l+2} \end{array}\right)\varepsilon\\&=
\tau \left(\psi_*(X_{4l+1}+\mathbf{i}X_{4l+2}+\mathbf{j}X_{4l+3}
+\mathbf{k}X_{4l+4})\right)\varepsilon\\&=
\tau \left(\overline{\partial}_{q_{l+1}}+\overline{\partial}_{q_{l+1}}\phi\cdot
\overline{\partial}_{q_{n+1}}\right)\varepsilon=\tau\left (\overline{\partial}_{q_{l+1}}\right)\varepsilon
+\tau \left(\overline{\partial}_{q_{l+1}}
\phi\right)\tau\left (\overline{\partial}_{q_{n+1}}\right)\varepsilon \\&=\left(\begin{array}{cc} \nabla_{(2l)}^{0'}&\nabla_{(2l)}^{1'}\\\nabla_{(2l+1)}^{0'}&\nabla_{(2l+1)}^{1'}
\end{array}\right)+\tau (\overline{\partial}_{q_l}
\phi)\left(
\begin{matrix}\nabla_{(2n)}^{0'}&\nabla_{(2n)}^{1'}\\
\nabla_{(2n+1)}^{0'}&\nabla_{(2n+1)}^{1'}\end{matrix}\right),
\end{aligned}\end{equation*}
where $\varepsilon=\left(
\begin{matrix}0&-1\\
1&0\end{matrix}\right)   $   in (\ref{epsilon}).
Then
(\ref{2.18}) follows.
\end{proof}
From this proposition we can derive the relationship between operators in $k$-Cauchy-Fueter complex on $\mathbb{H}^{n+1}$ and that in the tangential $k$-Cauchy-Fueter complex  on $\mathscr {H} $.
\begin{prop}\label{p1.1}
Suppose that $f$ is a $k$-regular function near $q_0\in\mathcal{S}.$ Then $\left(\psi\circ\mathcal{F}\right)^*f$ is  $k$-CF  on $\mathscr{H}$ near the point $\mathcal{F}^{-1}(\pi(q_0)) $.
\end{prop}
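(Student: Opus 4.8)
The plan is to set $\Phi:=\psi\circ\mathcal{F}:\mathscr{H}\to\mathcal{S}$ and to reduce the claim to the single identity $\mathscr{D}_0(\Phi^*f)=0$ in a neighborhood of $\Phi^{-1}(q_0)=\mathcal{F}^{-1}(\pi(q_0))$. Since the fibre $\mathscr{V}_0=\odot^k\mathbb{C}^2$ carries only primed indices and is literally the same space for the operator on $\mathbb{H}^{n+1}$ and for the tangential operator on $\mathscr{H}$, the pullback acts componentwise, $(\Phi^*f)_{A_1'\cdots A_k'}=\Phi^*\!\left(f_{A_1'\cdots A_k'}\right)$. Apart from the hypothesis $\widehat{\mathscr{D}}_0f=0$, the only structural fact I would invoke is the naturality of vector fields under a diffeomorphism: for any smooth $g$ on $\mathcal{S}$ and any $A,A'$ one has $Z_A^{A'}(\Phi^*g)=\Phi^*\!\left((\Phi_*Z_A^{A'})\,g\right)$.

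First I would combine this with the pushforward formula (\ref{2.18}) of Proposition \ref{p2.3}. For fixed $A=0,\dots,2n-1$ and primed indices $A_2',\dots,A_k'$, this yields
\begin{equation*}
\begin{aligned}
(\mathscr{D}_0(\Phi^*f))_{AA_2'\cdots A_k'}
&=\sum_{A_1'}Z_A^{A_1'}\Phi^*\!\left(f_{A_1'A_2'\cdots A_k'}\right)\\
&=\Phi^*\Big[\sum_{A_1'}\Big(\nabla_A^{A_1'}+\sum_{\alpha=0,1}C_A^{\alpha}\nabla_{(2n+\alpha)}^{A_1'}\Big)f_{A_1'A_2'\cdots A_k'}\Big].
\end{aligned}
\end{equation*}
Reorganising the sum and recalling the definition of the $k$-Cauchy-Fueter operator $\widehat{\mathscr{D}}_0$ on $\mathbb{H}^{n+1}$, the bracket becomes
\begin{equation*}
(\widehat{\mathscr{D}}_0f)_{A_2'\cdots A_k'A}+\sum_{\alpha=0,1}C_A^{\alpha}\,(\widehat{\mathscr{D}}_0f)_{A_2'\cdots A_k'(2n+\alpha)},
\end{equation*}
so that every component of $\mathscr{D}_0(\Phi^*f)$ is exhibited as a $\Phi$-pullback of a linear combination of components of $\widehat{\mathscr{D}}_0f$, where the coefficients $C_A^{\alpha}$ are the entries of $\tau(\overline{\partial}_{q_l}\phi)$ from (\ref{2.18}).

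The conclusion then follows because $f$ is $k$-regular near $q_0$, so \emph{every} component of $\widehat{\mathscr{D}}_0f$ vanishes there---not only those indexed by $A=0,\dots,2n-1$, but also those carrying the two extra unprimed indices $2n$ and $2n+1$ coming from the $(n+1)$-st quaternionic variable $q_{n+1}$. Hence both the leading term and the normal-derivative correction are zero, giving $\mathscr{D}_0(\Phi^*f)=0$ near $\mathcal{F}^{-1}(\pi(q_0))$. The step I expect to be the crux---and the precise reason the statement holds as stated---is exactly this point: $k$-regularity in the ambient $\mathbb{H}^{n+1}$ is strictly stronger than tangential $k$-CF regularity, and it is what kills the correction terms $\sum_{\alpha}C_A^{\alpha}\nabla_{(2n+\alpha)}^{A_1'}$ produced by the non-horizontal part of the pushed-forward vector fields in (\ref{2.18}); for a function that were only $k$-CF on $\mathcal{S}$ these terms would have no reason to vanish.
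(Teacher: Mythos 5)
Your proposal is correct and follows essentially the same route as the paper: pull back componentwise, use naturality of vector fields under the diffeomorphism together with the pushforward formula (\ref{2.18}) of Proposition \ref{p2.3}, and then observe that $k$-regularity gives the vanishing of $\sum_{B'}\nabla_A^{B'}f_{B'A_2'\cdots A_k'}$ for \emph{all} $A=0,\dots,2n+1$, which kills both the tangential and the correction terms. The point you single out as the crux is exactly the one the paper relies on.
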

\begin{proof}As $f$ is a $k$-regular function near $q_0\in\mathcal{S}\subset \mathbb{H}^{n+1},$ we have $\sum_{B'=0',1'}\nabla_{A}^{B'}f_{B' A_2'\cdots A_k'}=0$ for any  fixed $A =0,1,\cdots,2n+1, $ $A_2',\cdots, A_k' =0',1'.$
Then   we  find that
\begin{equation*}\begin{aligned}
\left.\left(\mathscr{D}_{0}(\psi\circ\mathcal{F})^*f\right)_{A  A_2'\cdots A_k'}
\right|_{\mathcal{F}^{-1}(\pi(q_0))} =&\sum_{B'=0',1'}\left.Z_{A}^{B'}((\psi\circ\mathcal{F})^*f)_{B' A_2'\cdots A_k'}\right|_{\mathcal{F}^{-1}(\pi(q_0))}\\=&
\sum_{B'=0',1'}\left(\psi\circ\mathcal{F}\right)_*Z_{A}^{B'}f_{B'A_2'\cdots A_k'}(q_0)
\\=&\sum_{B'}\left(\nabla_{A}^{B'}+\sum_{\alpha=0,1}C_A^{\alpha}
\nabla_{(2n+\alpha)}^{B'}\right)f_{B' A_2'\cdots A_k'}(q_0)=0,
\end{aligned}\end{equation*}for any  fixed $A =0,1,\cdots,2n-1, $ $A_2',\cdots, A_k' =0',1' $, by Proposition \ref{p2.3}.
The proposition is proved.
\end{proof}

\section{Appendix}

In the case $n=2,k=2,$ We have isomorphisms
\begin{align}\label{id1}\odot^{2}\mathbb{C}^{2}\cong\mathbb{C}^{3},\quad \ \ \ \mathbb{C}^{2}\otimes\mathbb{C}^{4}\cong\mathbb{C}^{8},
\end{align}
by  identifying $f\in\odot^{2}\mathbb{C}^{2}$ and $F\in\mathbb{C}^{2}\otimes\mathbb{C}^{4}$ with
\begin{equation}\label{id2}
f:=\left(\begin{array}{c}f_{0'0'}\\f_{0'1'}\\
f_{1'1'}\end{array}\right),\quad F:=\left(\begin{array}{c}F_{0'0}\\\vdots\\F_{0'3}\\F_{1'0}\\\vdots\\F_{1'3}
\end{array}\right),
\end{equation}
respectively.
The operator $\mathscr{D}_0$ in (\ref{zy}) can be write as a $8\times3$-matrix valued differential operator:
\begin{align*}
\mathscr{D}_{0}
=\left(\begin{array}{lll} -Y_3-\mathbf{i}Y_4& -Y_1-\mathbf{i}Y_2&\ \ \ \ \ \ \ 0\\\ \ Y_1-\mathbf{i}Y_2& -Y_3+\mathbf{i}Y_4&\ \ \ \ \ \ \  0\\ -Y_7-\mathbf{i}Y_8& -Y_5-\mathbf{i}Y_6&\ \ \ \ \ \ \ 0\\\ \ Y_5-\mathbf{i}Y_6& -Y_7+\mathbf{i}Y_8&\ \ \ \ \ \ \  0\\\ \ \ \ \ \ \  0&-Y_3-\mathbf{i}Y_4& -Y_1-\mathbf{i}Y_2\\\ \ \ \ \ \ \  0&\ \  Y_1-\mathbf{i}Y_2& -Y_3+\mathbf{i}Y_4\\\ \ \ \ \ \ \  0&-Y_7-\mathbf{i}Y_8& -Y_5-\mathbf{i}Y_6\\\ \ \ \ \ \ \  0&\ \  Y_5-\mathbf{i}Y_6& -Y_7+\mathbf{i}Y_8\end{array}\right).
\end{align*}
Similarly the operator $\mathscr{D}_1$ in (\ref{eq:D-j-1}) can be write as a $6\times 8$-matrix valued differential operator:
{\small\begin{align*}
 \left(\begin{array}{llllllll} -Y_1+\mathbf{i}Y_2& -Y_3-\mathbf{i}Y_4&\ \ \ \ \ \ \ 0&\ \ \ \ \ \ \ 0&\ \ Y_3-\mathbf{i}Y_4&-Y_1-\mathbf{i}Y_2&\ \ \ \ \ \ \ 0&\ \ \ \ \ \ \ 0\\ \ \ Y_7+\mathbf{i}Y_8&\ \ \ \ \ \ \ 0& -Y_3-\mathbf{i}Y_4&\ \ \ \ \ \ \ 0&\ \  Y_5+\mathbf{i}Y_6&\ \ \ \ \ \ \ 0&-Y_1-\mathbf{i}Y_2&\ \ \ \ \ \ \ 0\\- Y_5+\mathbf{i}Y_6&\ \ \ \ \ \ \ 0&\ \ \ \ \ \ \ 0& -Y_3-\mathbf{i}Y_4&\ \ Y_7-\mathbf{i}Y_8&\ \ \ \ \ \ \ 0&\ \ \ \ \ \ \ 0&-Y_1-\mathbf{i}Y_2\\\ \ \ \ \ \ \ 0&\ \ Y_7+\mathbf{i}Y_8&\ \ Y_1-\mathbf{i}Y_2&\ \ \ \ \ \ \ 0&\ \ \ \ \ \ \ 0&\ \  Y_5+\mathbf{i}Y_6&-Y_3+\mathbf{i}Y_4&\ \ \ \ \ \ \ 0\\\ \ \ \ \ \ \  0&-Y_5+\mathbf{i}Y_6&\ \ \ \ \ \ \ 0&\ \ Y_1-\mathbf{i}Y_2&\ \ \ \ \ \ \ 0&\ \  Y_7-\mathbf{i}Y_8&\ \ \ \ \ \ \ 0&-Y_3+\mathbf{i}Y_4\\\ \ \ \ \ \ \  0&\ \ \ \ \ \ \ 0&-Y_5+\mathbf{i}Y_6&-Y_7-\mathbf{i}Y_8&\ \ \ \ \ \ \  0&\ \ \ \ \ \ \ 0&\ \ Y_7-\mathbf{i}Y_8& -Y_5-\mathbf{i}Y_6\end{array}\right) .
\end{align*}}
Thus we have
$\mathscr{D}_{0}^*=-\overline{\mathscr{D}_{0}}^t,\ \mathscr{D}_{1}^*=-\overline{\mathscr{D}_{1}}^t.$
Then by direct calculation we have \begin{equation}\label{box1}
\Box_1=\mathscr{D}_0\mathscr{D}_0^*+\mathscr{D}_1^*\mathscr{D}_1
=\left(\begin{matrix}A&0\\0&B\end{matrix}\right)
\end{equation}
with
\begin{equation*}\begin{aligned}
A&=\left(\begin{smallmatrix}
\Delta_b+\Delta_1-12\mathbf{i}\partial_{s_1}&L_1
+(Y_1+\mathbf{i}Y_2)(-Y_3-\mathbf{i}Y_4)
&(-Y_1-\mathbf{i}Y_2)(Y_5-\mathbf{i}Y_6)
&(-Y_1-\mathbf{i}Y_2)(Y_7+\mathbf{i}Y_8)\\-\overline{L_1}
+(Y_3-\mathbf{i}Y_4)(-Y_1+\mathbf{i}Y_2)
&\Delta_b+\Delta_2+12\mathbf{i}
\partial_{s_1}&(-Y_3+\mathbf{i}Y_4)(Y_5-\mathbf{i}Y_6)
&(-Y_3+\mathbf{i}Y_4)(Y_7+\mathbf{i}Y_8)\\
(-Y_5-\mathbf{i}Y_6)(Y_1-\mathbf{i}
Y_2)&(-Y_5-\mathbf{i}Y_6)(Y_3+\mathbf{i}Y_4)
&\Delta_b+\Delta_3-12\mathbf{i}\partial_{s_1}&{L_1}
+(Y_5+\mathbf{i}Y_6)(-Y_7-\mathbf{i}Y_8)\\(-Y_7+\mathbf{i}Y_8)
(Y_1-\mathbf{i}Y_2)&(-Y_7+\mathbf{i}Y_8)(Y_3+\mathbf{i}Y_4)
&-\overline{L_1}
+(Y_7-\mathbf{i}Y_8)(-Y_5+\mathbf{i}Y_6)
&\Delta_b+\Delta_4+12\mathbf{i}\partial_{s_1}
\end{smallmatrix}\right),\\
B&=\left(\begin{smallmatrix}
\Delta_b+\Delta_2-12\mathbf{i}\partial_{s_1}& {L_1}
+(-Y_3-\mathbf{i}Y_4)(-Y_1-\mathbf{i}Y_2)&(-Y_3-\mathbf{i}Y_4)
(-Y_7-\mathbf{i}Y_8)&(-Y_3-\mathbf{i}Y_4)(-Y_5-\mathbf{i}Y_6)\\
-\overline{L_1}+(Y_1-\mathbf{i}Y_2)(Y_3-\mathbf{i}Y_4)
&\Delta_b+\Delta_1+12\mathbf{i}\partial_{s_1}
&(Y_1-\mathbf{i}Y_2)(Y_7-\mathbf{i}Y_8)
&(Y_1-\mathbf{i}Y_2)(-Y_5-\mathbf{i}Y_6)\\
(Y_7-\mathbf{i}Y_8)(Y_3-\mathbf{i}Y_4)
&(-Y_7-\mathbf{i}Y_8)(-Y_1-\mathbf{i}Y_2)
&\Delta_b+\Delta_4-12\mathbf{i}\partial_{s_1}& {L_1}
+(-Y_7-\mathbf{i}Y_8)(-Y_5-\mathbf{i}Y_6)\\
(Y_5-\mathbf{i}Y_6)(Y_3-\mathbf{i}Y_4)
&(Y_5-\mathbf{i}Y_6)(-Y_1-\mathbf{i}Y_2)&-\overline{L_1}
+(Y_5-\mathbf{i}Y_6)(Y_7-\mathbf{i}Y_8)
&\Delta_b+\Delta_3+12\mathbf{i}\partial_{s_1}
\end{smallmatrix}\right),\end{aligned}\end{equation*}
where
$\Delta_b=-Y_1^2\cdots-Y_8^2,\Delta_1=-Y_1^2-Y_2^2,
\Delta_2=-Y_3^2-Y_4^2,\Delta_3=-Y_5^2-Y_6^2,\Delta_4=-Y_7^2-Y_8^2,
L_1=8(\partial_{s_2}+\mathbf{i}\partial_{s_3}).$ Because of the complexity of $\Box_1$ in (\ref{box1}), it is  not easy to obtain  its fundamental solution.

\end{document}